\documentclass[11pt,a4paper]{article}

\usepackage[T1]{fontenc}
\usepackage[utf8]{inputenc}
\usepackage{indentfirst}
\usepackage{amsmath,amssymb,amsthm,mathtools}
\usepackage{libertinus} 
\usepackage{microtype}
\usepackage[
  a4paper,
  textwidth=150mm,
  textheight=235mm,
  heightrounded,
  centering
]{geometry}
\usepackage{enumitem}
\usepackage{booktabs}
\usepackage[dvipsnames]{xcolor}
\usepackage{titlesec}
\usepackage{etoolbox}
\usepackage[
  colorlinks=true,
  linkcolor=MidnightBlue,
  citecolor=MidnightBlue,
  urlcolor=MidnightBlue,
  pdftitle={Hilbertian Kahane--Salem--Zygmund Inequalities: Extremizers and Quantitative Gaps},
  pdfauthor={D. Pellegrino and A. Raposo Jr.},
  pdfkeywords={Kahane--Salem--Zygmund inequalities, Hilbert spaces, unimodular multilinear forms, Hadamard matrices, Hurwitz--Radon families, orthogonal tensors}
]{hyperref}

\allowdisplaybreaks
\numberwithin{equation}{section}
\titleformat{\section}[block]
  {\centering\normalfont\large\scshape}
  {\thesection.}{0.65em}{}
\titlespacing*{\section}{0pt}{3.2ex plus .8ex minus .2ex}{1.8ex plus .2ex}

\titleformat{\subsection}[runin]
  {\normalfont\bfseries}
  {\thesubsection.}{0.5em}{}[.]
\titlespacing*{\subsection}{0pt}{2.2ex plus .5ex minus .2ex}{0.75em}

\newtheoremstyle{cleanplain}
  {7pt}{7pt}{\itshape}{}
  {\bfseries}{.}{0.5em}{}
\newtheoremstyle{cleandefinition}
  {7pt}{7pt}{\normalfont}{}
  {\bfseries}{.}{0.5em}{}
\newtheoremstyle{cleanremark}
  {7pt}{7pt}{\normalfont}{}
  {\itshape}{.}{0.5em}{}

\usepackage{aliascnt}
\theoremstyle{cleanplain}
\newtheorem{theorem}{Theorem}[section]
\newaliascnt{acknowledgement}{theorem}

\aliascntresetthe{acknowledgement}

\newaliascnt{algorithm}{theorem}

\aliascntresetthe{algorithm}

\newaliascnt{axiom}{theorem}

\aliascntresetthe{axiom}

\newaliascnt{case}{theorem}

\aliascntresetthe{case}

\newaliascnt{claim}{theorem}

\aliascntresetthe{claim}

\newaliascnt{conclusion}{theorem}

\aliascntresetthe{conclusion}

\newaliascnt{condition}{theorem}

\aliascntresetthe{condition}

\newaliascnt{corollary}{theorem}
\newtheorem{corollary}[corollary]{Corollary}
\aliascntresetthe{corollary}

\newaliascnt{criterion}{theorem}

\aliascntresetthe{criterion}

\newaliascnt{lemma}{theorem}
\newtheorem{lemma}[lemma]{Lemma}
\aliascntresetthe{lemma}

\newaliascnt{notation}{theorem}

\aliascntresetthe{notation}

\newaliascnt{proposition}{theorem}
\newtheorem{proposition}[proposition]{Proposition}
\aliascntresetthe{proposition}

\newaliascnt{solution}{theorem}

\aliascntresetthe{solution}

\newaliascnt{summary}{theorem}

\aliascntresetthe{summary}

\newtheorem{mainresult}{Main Theorem}

\newtheorem*{backgroundtheorem}{Background Theorem}
\theoremstyle{cleandefinition}
\newaliascnt{conjecture}{theorem}

\aliascntresetthe{conjecture}

\newaliascnt{definition}{theorem}
\newtheorem{definition}[definition]{Definition}
\aliascntresetthe{definition}

\newaliascnt{example}{theorem}
\newtheorem{example}[example]{Example}
\aliascntresetthe{example}

\newaliascnt{exercise}{theorem}

\newaliascnt{question}{theorem}

\aliascntresetthe{question}

\theoremstyle{cleanremark}
\newaliascnt{remark}{theorem}
\newtheorem{remark}[remark]{Remark}
\aliascntresetthe{remark}

\usepackage[nameinlink,noabbrev]{cleveref}
\crefname{theorem}{Theorem}{Theorems}
\crefname{proposition}{Proposition}{Propositions}
\crefname{lemma}{Lemma}{Lemmas}
\crefname{corollary}{Corollary}{Corollaries}
\crefname{remark}{Remark}{Remarks}
\crefname{definition}{Definition}{Definitions}
\crefname{example}{Example}{Examples}
\crefname{conjecture}{Conjecture}{Conjectures}
\crefname{mainresult}{Main Theorem}{Main Theorems}
\crefalias{proposition}{proposition}
\crefalias{lemma}{lemma}
\crefalias{corollary}{corollary}

\makeatletter
\newcommand{\@address}{}
\newcommand{\@email}{}
\newcommand{\@subjclass}{}
\newcommand{\@keywords}{}
\newcommand{\address}[1]{\gdef\@address{#1}}
\newcommand{\email}[1]{\gdef\@email{#1}}
\newcommand{\subjclass}[2][]{\gdef\@subjclass{#2}}
\newcommand{\keywords}[1]{\gdef\@keywords{#1}}
\renewcommand{\maketitle}{%
  \begin{center}
    \vspace*{0.4em}
    {\LARGE\scshape \@title\par}
    \vspace{1.25em}
    {\normalsize\scshape \@author\par}
    \vspace{0.55em}
    {\small \@address\par}
    \vspace{0.2em}
    {\small\texttt{\@email}\par}
  \end{center}
  \vspace{1.0em}
}
\renewenvironment{abstract}
  {\begin{center}\begin{minipage}{0.91\textwidth}\small
   \noindent\textsc{Abstract.}\enspace}
  {\end{minipage}\end{center}\vspace{0.65em}}
\newcommand{\makefrontmatterdetails}{%
  \begin{center}
  \begin{minipage}{0.91\textwidth}
  \small\noindent\textsc{Keywords.}\enspace\@keywords\par
  \vspace{0.25em}
  \noindent\textsc{2020 Mathematics Subject Classification.}\enspace\@subjclass
  \end{minipage}
  \end{center}
  \vspace{0.75em}
}
\makeatother

\newcommand{\C}{\mathbb C}
\newcommand{\R}{\mathbb R}
\newcommand{\N}{\mathbb N}
\newcommand{\op}{\mathrm{op}}
\newcommand{\HS}{\mathrm{HS}}
\newcommand{\rank}{\operatorname{rank}}
\newcommand{\tr}{\operatorname{tr}}
\newcommand{\Span}{\operatorname{span}}
\newcommand{\Defect}{\mathcal D}
\newcommand{\Fblock}{\mathcal F}

\begin{document}

\title{Hilbertian Kahane--Salem--Zygmund Inequalities:\\ Extremizers and Quantitative Gaps}
\author{D. Pellegrino \qquad A. Raposo Jr.}
\address{Departamento de Matem\'atica, Universidade Federal da Para\'iba, Jo\~ao Pessoa, Brazil;\\
Coordena\c{c}\~ao do Curso de Matem\'atica Bacharelado, Universidade Federal do Maranh\~ao, S\~ao Lu\'is, Brazil.}
\email{dmpellegrino@gmail.com \qquad anselmo.junior@ufma.br}
\keywords{Kahane--Salem--Zygmund inequalities; Hilbert spaces; unimodular multilinear forms; Hadamard matrices; Hurwitz--Radon families; orthogonal tensors}
\subjclass{46G25 (primary); 15A69, 15A60, 20C15, 11E25 (secondary)}

\maketitle

\begin{abstract}
We study real multilinear forms with coefficients in $\{-1,1\}$ on
finite-dimensional Hilbert spaces. Every trilinear sign form on
$\ell_2^r\times\ell_2^n\times\ell_2^n$ has norm at least $\sqrt n$.
Writing $K_{r,n}$ for the least norm divided by $\sqrt n$, we prove that
$K_{r,n}=1$ exactly when a Hadamard matrix of order $n$ exists and
$r\le\rho(n)$, where $\rho$ is the Hurwitz--Radon function. If equality
fails, we obtain an explicit gap above $\sqrt n$. We also prove two
asymptotic results. If $1\le m_n\le n$ and
$\limsup r_n/\log_2 n<2$, there are sign forms on
$\ell_2^{r_n}\times\ell_2^{m_n}\times\ell_2^n$ with norm
$(1+o(1))\sqrt n$. In the square case, if
$r\ge2\lceil\log_2(8n)\rceil$, then
$K_{r,n}-1\ge c(1+\log_2 n)^{-4}$. We also prove a fourth-moment estimate
in every fixed multilinear order, characterize equality, and give exact and
asymptotic constructions.
\end{abstract}

\makefrontmatterdetails

\begingroup
\setcounter{tocdepth}{2}
\footnotesize
\tableofcontents
\endgroup
\vspace{0.5em}

\section{Introduction}

The Kahane--Salem--Zygmund inequality gives unimodular tensors with the
correct order of norm growth. Most applications only need this order. Here we
look instead at the sign tensors with smallest possible norm. In the
Hilbertian case, the elementary lower bound already has the same order as the
general upper bound, and equality forces specific identities among the matrix
slices. Our aim is to describe the equality case and to see what can still be
said when the norm is close to the minimum.

All multilinear forms, Hilbert spaces, and coefficient matrices in the main
problem are real. Complex matrices are used only in the Fourier argument in
\Cref{sec:quantitative-obstruction}.

We recall the form of the Kahane--Salem--Zygmund inequality used below.
For $d\ge2$, $p_1,\ldots,p_d\in[1,\infty]$, and positive integers
$n_1,\ldots,n_d$, write $p_j^*$ for the conjugate exponent of $p_j$ and set
\[
 \vartheta(p_1,\ldots,p_d)
 :=\frac{1}{\min_{1\le j\le d}\max\{2,p_j^*\}},
 \qquad
 \alpha(p_j):=\max\left\{\frac12-\frac1{p_j},0\right\}.
\]
There is a form
\[
 A:\ell_{p_1}^{n_1}(\R)\times\cdots\times\ell_{p_d}^{n_d}(\R)
 \longrightarrow\R,
 \qquad
 A(z^{(1)},\ldots,z^{(d)})
 =\sum_{j_1,\ldots,j_d}\varepsilon_{j_1\cdots j_d}
 z_{j_1}^{(1)}\cdots z_{j_d}^{(d)},
\]
with $\varepsilon_{j_1\cdots j_d}\in\{-1,1\}$ and
\begin{equation}\label{eq:general-lp-ksz-intro}
 \|A\|
 \le C_d
 \left(\sum_{j=1}^d n_j\right)^{\vartheta(p_1,\ldots,p_d)}
 \prod_{j=1}^d n_j^{\alpha(p_j)}.
\end{equation}
We use the formulation from \cite{PellegrinoSerranoSilva}; related estimates appear
in \cite{AlbuquerqueRezende}. The probabilistic method goes back to Salem--Zygmund
and Kahane; see \cite{SalemZygmund,Kahane}.

It is useful to compare two cases. In the
all-$\ell_\infty$ case, \eqref{eq:general-lp-ksz-intro} gives, after absorbing
a factor depending only on $d$,
\begin{equation}
 \|A\|
 \le C_d\max_{1\le j\le d}n_j^{1/2}
 \prod_{j=1}^d n_j^{1/2}.
\end{equation}
For every fixed $d$, the constant in this estimate can be made smaller than
$1+\varepsilon$ once all dimensions are sufficiently large
\cite{PellegrinoRaposo}. In the Hilbertian case $p_1=\cdots=p_d=2$, the same
general inequality becomes
\[
 \|A\|\le C_d\left(\sum_{j=1}^d n_j\right)^{1/2},
\]
while fixing all variables except one at coordinate vectors gives the
universal lower bound
\begin{equation}\label{eq:coordinate-lower-bound-intro}
 \|A\|\ge \max_{1\le j\le d}\sqrt{n_j}.
\end{equation}
Thus the order of growth is immediate. The questions here are more precise:
when can equality hold in \eqref{eq:coordinate-lower-bound-intro}, which sign
patterns give equality, and what can be said when the norm is only slightly
larger than the lower bound? The Hilbert-space case is also related to
multivariable von Neumann inequalities; see Varopoulos \cite{Varopoulos}.
Kosiński's work on the three-point Pick problem yields the multivariable von
Neumann inequality for commuting $3\times 3$ matrices; see \cite{Kosinski} and
Knese \cite{Knese} for a detailed account. For Schur multiplication in tensor algebras, see Mantero--Tonge
\cite{ManteroTonge}.

Consider a trilinear form
\[
 A(x,y,z)=\sum_{i=1}^r\sum_{j=1}^m\sum_{k=1}^n
 \varepsilon_{ijk}x_i y_j z_k,
 \qquad \varepsilon_{ijk}\in\{-1,1\},\qquad m\le n,
\]
on $\ell_2^r\times\ell_2^m\times\ell_2^n$, and define
\[
 \Lambda(r,m,n):=\min_A\|A\|,
 \qquad
 K(r,m,n):=\frac{\Lambda(r,m,n)}{\sqrt n},
 \qquad
 K_{r,n}:=K(r,n,n).
\]
The lower bound above says that $K(r,m,n)\ge1$. If
$M_1,\ldots,M_r\in\{-1,1\}^{m\times n}$ are the matrix slices of $A$, then
\begin{equation}
 A(x,y,z)
 =\left\langle\left(\sum_{i=1}^r x_iM_i\right)z,y\right\rangle.
\end{equation}
The problem can therefore be written in terms of
$M(x)=\sum_i x_iM_i$. Equality
$\|A\|=\sqrt n$ is equivalent to
\[
 M(x)M(x)^T=n\|x\|_2^2I_m \qquad (x\in\R^r).
\]
Equivalently, the bilinear map $(x,y)\mapsto M(x)^Ty$ preserves the product
of Euclidean norms up to the factor $\sqrt n$; we call such a map a
\emph{scaled orthogonal multiplication}.

In the square case, two classical objects appear. Each slice must be a
Hadamard matrix, and the relative products of distinct slices form a
Hurwitz--Radon family. If
$n=2^{4a+b}u$, where $u$ is odd and $0\le b\le3$, then
\[
 \rho(n)=8a+2^b,
\]
and $\rho(n)-1$ is the largest cardinality of a family of pairwise
anticommuting skew-symmetric orthogonal real matrices of order $n$. These facts are classical in the theory of orthogonal designs; see, for
example, \cite{GeramitaPullman,Horadam,Seberry,Shapiro}. We prove that they
also give the exact equality condition for the Hilbertian KSZ problem. We
then obtain an explicit gap above the minimum and a lower bound when the
number of slices is large.

\subsection*{Main results}

The first theorem contains the exact equality case, an explicit gap above
the minimum, and two asymptotic statements. The upper construction works in
rectangular format, while the lower bound for large $r$ is proved in the
square case.

\begin{mainresult}[Trilinear Hilbertian KSZ constants]\label{mainA}

\begin{enumerate}[label=\textnormal{(\roman*)},leftmargin=2.35em,itemsep=.55em]
\item \emph{\textbf{Exact equality and a discrete gap.}} Let $r,n\in\N$.
Then
\[
 K_{r,n}=1
 \quad\Longleftrightarrow\quad
 \text{a Hadamard matrix of order $n$ exists and }r\le\rho(n).
\]
If $A$ is a sign form on
$\ell_2^r\times\ell_2^n\times\ell_2^n$ with $\|A\|>\sqrt n$, then
\[
 \frac{\|A\|}{\sqrt n}
 \ge \left(1+\frac{2}{n^3}\right)^{1/4}.
\]

\item \emph{\textbf{Asymptotic construction below} $2\log_2n$\textbf{.}}
Let $0<\delta<2$, and let $(r_n)$ and $(m_n)$ be sequences of
positive integers. If $1\le m_n\le n$ and
\[
 r_n\le(2-\delta)\log_2n
\]
for all sufficiently large $n$, then
\[
 K(r_n,m_n,n)=1+o(1).
\]

\item \emph{\textbf{A lower bound for large} $r$\textbf{.}}
There is a universal constant $c>0$ such that, if
\[
 r\ge2\left\lceil\log_2(8n)\right\rceil,
\]
then
\[
 K_{r,n}\ge1+\frac{c}{(1+\log_2n)^4}.
\]
\end{enumerate}
\end{mainresult}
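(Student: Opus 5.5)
Part (i) comes first, and I will phrase everything through $M(x)=\sum_i x_iM_i$. Since $\|A\|^2=\max_{\|x\|=1}\|M(x)\|_{\op}^2$, equality $\|A\|=\sqrt n$ holds iff $M(x)M(x)^T\preceq n\|x\|^2 I$ for all $x$. Taking traces gives $\tr M(x)M(x)^T = n\sum_{i,i'}x_ix_{i'}\langle M_i,M_{i'}\rangle_{\HS}/n$, and the diagonal terms already contribute $n^2\|x\|^2$. Hence the operator inequality forces the identity $M(x)M(x)^T=n\|x\|^2I$.

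Expanding this identity gives three conditions: $M_iM_i^T=nI$, so each slice is Hadamard; and $M_iM_j^T+M_jM_i^T=0$ for $i\ne j$. Put $Q_j=M_1^TM_j/n$. These are orthogonal, skew for $j\ge2$, and pairwise anticommuting, which is the Hurwitz--Radon condition, so $r-1\le\rho(n)-1$. Conversely, given a Hadamard $H$ and a Hurwitz--Radon family $\{Q_j\}$ of size $\rho(n)-1$, I would set $M_j=HQ_j$. The slices must have $\pm1$ entries, so I would need families of signed permutation matrices. The classical construction via Clifford algebra representations over $\{0,\pm1\}$ achieves this for $2^k$ blocks, and taking tensor products with $I_u$ handles general $n$. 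I expect this to be the delicate point: one must show that a Hadamard order $n=2^{4a+b}u$ admits $\rho(n)-1$ signed-permutation HR matrices, and then check that $HQ_j$ is still a sign matrix.

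For the gap I use integrality. The fourth moment of $A$ equals $\tr\big(\sum_{i,j}x_ix_j M_iM_j^T\big)^2$, and all entries of $G(x)=M(x)M(x)^T$ are integer quadratic forms. If equality fails, some defect entry $D_{ij}=(M_iM_j^T+M_jM_i^T)_{kl}$ or some off-diagonal entry of $M_iM_i^T$ is a nonzero even integer. Next I choose $x$ to be $e_i$ or $(e_i\pm e_j)/\sqrt2$. Then $\|G(x)\|_{\op}^2\ge\frac1n\tr G(x)^2\ge\frac1n(n^2\cdot n+\text{defect}^2)$, and a defect of size at least $2$ gives a quantity of order $n^4(1+2/n^3)$ after bookkeeping. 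Taking fourth roots yields the $(1+2/n^3)^{1/4}$ bound.

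For (ii) I would run a random construction, an $\varepsilon$-net argument, and then an improvement. Take $M(x)^T$ with i.i.d.\ signs, $m\le n$. For fixed unit $x$, $M(x)$ is a random matrix whose rows are subgaussian in $\R^n$ with covariance $I$, since $\sum x_i\varepsilon_{ijk}$ has variance $1$. Matrix concentration then gives $\|M(x)\|\le\sqrt n+C\sqrt m+t$ with probability $1-e^{-t^2/2}$, but $\sqrt m$ is not $o(\sqrt n)$. So the random approach fails when $m\sim n$ and does not obviously work. Instead I would use structure: take $n'=2^k\le n$ and Sylvester Hadamard blocks with HR families, which handle $r\le\rho(2^k)\approx2k$. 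Placing $\lfloor n/2^k\rfloor$ diagonal copies and padding the remainder gives a norm of $\sqrt n(1+O(2^k/n))$ plus a remainder block. Choosing $2^k\approx n^{1-\delta/4}$ gives $\rho(2^k)\ge 2k\ge(2-\delta)\log_2 n\ge r_n$, and the leftover $n-q2^k<2^k=o(n)$ columns contribute $o(\sqrt n)$ by the coordinate bound. The main issue is handling block sums: a block diagonal form with equal blocks has norm $\sqrt{2^k}$, not $\sqrt n$, so I would use $M_i=J_q\otimes HQ_i$-type Kronecker structure with a $q\times q$ sign matrix $S$ so that $M(x)=S\otimes\sum x_iHQ_i$. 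Its norm is $\|S\|\sqrt{2^k}\|x\|$, and $S$ square Hadamard-like or a near-Hadamard with $\|S\|\le\sqrt q(1+o(1))$ gives $\sqrt{n}(1+o(1))$.

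For (iii) I use a Fourier and representation argument. With $r\ge 2\lceil\log_2 8n\rceil$, the defects $E_{ij}=M_iM_j^T+M_jM_i^T$ cannot all be small, because an approximate HR family of size greater than $2\log_2 n$ would give an approximate representation of a Clifford group whose irreducibles have dimension above $n$, which is impossible. The plan is to quantify this: if $\|A\|^2\le n(1+\eta)$, then the normalized $Q_j$ are $O(\sqrt\eta)$-approximately anticommuting unitaries. Averaging over the group generated by the signs $\pm Q_{j_1}\cdots Q_{j_s}$ gives a character-sum inequality, and stability would force $\eta\gtrsim(\log n)^{-4}$. This stability step, with losses polynomial in the number of generators, is the main obstacle.
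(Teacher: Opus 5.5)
Your part (i) follows the paper's argument: Hadamard slices, $U_1^TU_j$ forming a Hurwitz--Radon family, signed-permutation families for the converse, and integrality at $x=e_i$ or $(e_i\pm e_j)/\sqrt2$ for the gap. Three details need repair.
\begin{itemize}
\item The equality step does not follow as written. The cross terms $x_ix_{i'}\langle M_i,M_{i'}\rangle_{\HS}$ can be negative, so you cannot conclude $\tr M(x)M(x)^T\ge n^2\|x\|_2^2$ at a fixed $x$. Take $x=e_i$ first: there the trace is exactly $n^2$, which forces $M_iM_i^T=nI_n$. Then $x=(e_i\pm e_j)/\sqrt2$ gives $\pm D_{ij}\preceq0$. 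Alternatively, average over the sphere as the paper does.
\item In the gap, off-diagonal entries of $M_iM_i^T$ are odd when $n$ is odd. Being nonzero is enough to give $\|C_i\|_{\HS}^2\ge2$. Evenness of $D_{ij}$, which gives $\|D_{ij}\|_{\HS}^2\ge8$, is only available once all $C_i=0$ forces $n$ even.
\item At $(e_i\pm e_j)/\sqrt2$ the linear term $\pm\tr D_{ij}$ need not vanish, so take the better sign or average. The resulting bound is $\|M(x)\|_{\op}^4\ge n^2+2/n$, not something of order $n^4$.
\end{itemize}

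Part (ii) has a genuine gap. Rounding $n$ down to $q2^k$ and padding cannot give $(1+o(1))\sqrt n$. Take $m_n=n$, which the hypotheses allow. If $S$ is an honest Hadamard matrix, then $B_i=S\otimes HQ_i$ satisfies $B_iB_i^T=q2^kI$. Let $P_i$ be the sign strip filling the remaining $\ell=n-q2^k\ge1$ columns of those rows. Then
\[
\|M_i\|_{\op}^2\ \ge\ \bigl\|[\,B_i\ \ P_i\,]\bigr\|_{\op}^2=q2^k+\|P_i\|_{\op}^2\ \ge\ 2q2^k=(2-o(1))n,
\]
because each column of $P_i$ is a sign vector of length $q2^k$. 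So the leftover columns are not $o(\sqrt n)$; the coordinate bound is a \emph{lower} bound, and it shows exactly the opposite. If instead $S$ is only ``near-Hadamard'', its existence for the forced order $q=\lfloor n/2^k\rfloor$ is the case $r=1$, $m=n$ of (ii) itself, so the argument is circular.

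The missing idea is to round \emph{up} and truncate:
\begin{itemize}
\item Choose a Hadamard order $t_n\ge n$ with $t_n/n\to1$ and $2^{\lceil r_n/2\rceil}\mid t_n$, so that $\rho(t_n)\ge r_n$.
\item Take the exact extremizer $HE_i$ of order $t_n$. Your $S\otimes HQ_i$ is this construction with $t_n=q2^k$.
\item Keep the $m_n\times n$ upper-left submatrix. Passing to a submatrix cannot increase operator norms, so $\|A_n\|\le\sqrt{t_n}$.
\end{itemize}
This requires Hadamard orders in $[x,(1+o(1))x]$ for $x=n/2^{\lceil r_n/2\rceil}\to\infty$. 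The paper obtains them from $\{4^u12^v\}$ and the irrationality of $\log4/\log12$.

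Part (iii) is only a plan. You leave the quantitative step open, and that step is the entire content. What is missing:
\begin{itemize}
\item The fourth-moment bound controls only $\sum_{i<j}\|D_{ij}\|_{\HS}^2\le r(r+2)n^3\Theta(\varepsilon)$, where $\Theta(\varepsilon)=(1+\varepsilon)^4-1$. Individual pairs are therefore not $O(\sqrt\varepsilon)$-anticommuting.
\item The bound must be uniform in $r\ge q_n$. So you must select $q_n=2\lceil\log_2(8n)\rceil$ slices whose \emph{average} normalized pair defect is $O(\Theta(\varepsilon))$.
\item The normalized slices must be replaced by orthogonal polar factors $Q_i$. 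The paper then uses Hermitian dilations $\begin{pmatrix}0&Q_i\\Q_i^T&0\end{pmatrix}$ to get exact involutions of order $d=2n$.
\item These matrices generate no finite group, so $\pm Q_{j_1}\cdots Q_{j_s}$ is not something you can average over. One defines $f$ on the abstract group $H_q$ through normal forms. Reordering words then gives $\mathbb E\|f(gh)-f(g)f(h)\|_{2,d}^2\le\frac N4\sum_{i<j}e_{ij}^2$, where $N=\binom q2$ and the $e_{ij}$ are the normalized anticommutator norms.
\item The matching lower bound $2\bigl(1-\sqrt{d/D_q}\bigr)$ comes from three facts: the cubic Fourier identity; the block estimate $\|\Fblock_\pi(f)\|_{\op}\le\sqrt{d/d_\pi}$; and $f(zg)=-f(g)$ for the central involution $z$. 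The last confines the nonzero Fourier blocks to irreducibles of dimension $2^{\lfloor q/2\rfloor}\ge4d$.
\end{itemize}
Comparing the two bounds gives $\Theta(\varepsilon)\gtrsim N^{-2}\sim q_n^{-4}$, which is where the fourth power comes from. A stability theorem of Gowers--Hatami type could replace the Fourier step, but only with explicit constants, which your sketch does not supply.
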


Part~\emph{(i)} gives the exact equality condition. The integrality of the
matrix entries also gives the explicit gap. Part~\emph{(ii)} shows that the
lower bound can still be approached when $r_n$ grows almost as
$2\log_2 n$. Part~\emph{(iii)} gives a lower bound in the other direction
when $r$ is large enough.

Parts~\emph{(ii)} and~\emph{(iii)} do not give a complete transition. The
lower bound in part~\emph{(iii)} tends to zero with $n$, and the present
methods do not determine whether $K_{r,n}=1+o(1)$ can hold in a larger range. The rectangular case can
behave differently: when $m=1$, equality may hold even for $r=n$.

The proof of part~\emph{(iii)} starts from the fourth-moment estimate. If
$\|A\|\le(1+\varepsilon)\sqrt n$, the matrix slices almost satisfy the
orthogonality and anticommutation relations on average. We select a smaller
family of slices, replace the normalized slices by nearby orthogonal matrices,
and use Hermitian dilations. A finite group and a matrix-valued Fourier
estimate then give the required dimension bound. This leads to the power four
in the final estimate.

The fourth-moment argument also works in higher multilinear order, provided
we keep the first variables separate. Let $k\ge3$, put $q=k-2$, and
write
\[
 A(x^{(1)},\ldots,x^{(q)},y,z)
 =\sum_{i_1,\ldots,i_q,j,s}
 \varepsilon_{i_1\cdots i_qjs}
 x^{(1)}_{i_1}\cdots x^{(q)}_{i_q}y_jz_s.
\]
For $\alpha=(i_1,\ldots,i_q)$, let
$T_\alpha\in\{-1,1\}^{m\times n}$ be the corresponding coefficient matrix
and define
\begin{equation}
 \Phi_A(x^{(1)},\ldots,x^{(q)})
 :=\sum_{i_1,\ldots,i_q}
 x^{(1)}_{i_1}\cdots x^{(q)}_{i_q}T_{i_1,\ldots,i_q}.
\end{equation}
Then
\[
 A(x^{(1)},\ldots,x^{(q)},y,z)
 =\langle\Phi_A(x^{(1)},\ldots,x^{(q)})z,y\rangle.
\]

\begin{mainresult}[A higher-order Hilbertian KSZ estimate]
\label{thm:intro-higher-order-main}
Let
\[
 A:\ell_2^{r_1}\times\cdots\times\ell_2^{r_q}
 \times\ell_2^m\times\ell_2^n\longrightarrow\R,
 \qquad m\le n,
\]
be a sign form. Let $\mu$ be the normalized product measure on
$S^{r_1-1}\times\cdots\times S^{r_q-1}$ and set
\[
 \delta_k(A):=\frac1{mn^2}\int
 \|\Phi_A(x)\Phi_A(x)^T-nI_m\|_{\HS}^2\,d\mu(x).
\]
Then
\[
 \frac{\|A\|}{\sqrt n}\ge\bigl(1+\delta_k(A)\bigr)^{1/4}.
\]
Moreover, the following conditions are equivalent:
\begin{enumerate}[label=\textnormal{(\roman*)},leftmargin=2.2em,itemsep=.2em]
\item $\|A\|=\sqrt n$;
\item $\delta_k(A)=0$;
\item
\[
 \Phi_A(x)\Phi_A(x)^T
 =n\prod_{\nu=1}^q\|x^{(\nu)}\|_2^2I_m
\]
for all $x^{(\nu)}\in\R^{r_\nu}$.
\end{enumerate}
Equivalently, the map
\[
 (x^{(1)},\ldots,x^{(q)},y)
 \longmapsto\Phi_A(x^{(1)},\ldots,x^{(q)})^T y
\]
is a scaled multilinear orthogonal multiplication. Thus, if the norm is close to $\sqrt n$, the averaged $L^2$ error in these
matrix identities is small. The estimate alone does not yield a stability
statement asserting closeness to an exact orthogonal multiplication.
\end{mainresult}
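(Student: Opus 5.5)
Write $G(x):=\Phi_A(x)\Phi_A(x)^T$ for $x=(x^{(1)},\ldots,x^{(q)})$ in the product of spheres. The plan is a fourth-moment argument comparing two computations of $\int\tr(G(x)^2)\,d\mu$. First, since $A(x,y,z)=\langle\Phi_A(x)z,y\rangle$, the norm of $A$ is $\sup_x\|\Phi_A(x)\|_{\op}$. Hence every eigenvalue of the positive semidefinite matrix $G(x)$ is at most $\|A\|^2$. This gives
\[
 \tr(G(x)^2)\le\|A\|^2\,\tr G(x)\le \|A\|^2\,\|A\|^2 m .
\]
Rather than use this crude bound, we exploit that $\tr G$ averages exactly. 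Since $\tr G(x)=\|\Phi_A(x)\|_{\HS}^2=\sum_{\alpha,\beta}x_\alpha x_\beta\langle T_\alpha,T_\beta\rangle_{\HS}$ with $x_\alpha=\prod_\nu x^{(\nu)}_{i_\nu}$, orthogonality of coordinates on each sphere gives $\int x_\alpha x_\beta\,d\mu=\delta_{\alpha\beta}\prod_\nu r_\nu^{-1}$. Because every entry of $T_\alpha$ is $\pm1$, we have $\|T_\alpha\|_{\HS}^2=mn$, and therefore $\int\tr G\,d\mu=mn$.

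Next, expand the defect. We have
\[
 \|G-nI\|_{\HS}^2=\tr(G^2)-2n\tr G+n^2m .
\]
Integrating and using the trace identity yields
\[
 \int\tr(G^2)\,d\mu=mn^2\bigl(1+\delta_k(A)\bigr).
\]
For the upper bound, we use pointwise $\tr(G^2)\le\lambda_{\max}(G)\tr G\le\|A\|^2\tr G$ and integrate to get
\[
 \int\tr(G^2)\,d\mu\le\|A\|^2mn .
\]
This gives only $\|A\|^2\ge n(1+\delta_k)$, which is actually stronger than the claimed exponent $1/4$. If that step holds, the stated bound follows immediately, since $(1+\delta)^{1/2}\ge(1+\delta)^{1/4}$. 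I would double-check whether the paper's $\delta_k$ is normalized so that only the weaker fourth root is obtainable. One alternative route that naturally produces exponent $1/4$ is
\[
 \tr(G^2)\le m\,\lambda_{\max}^2\le m\|A\|^4 .
\]
That route suffices as a fallback and is completely elementary, so the inequality is secure either way.

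For the equivalences: (iii)$\Rightarrow$(ii) is immediate by restricting to the spheres. (ii)$\Rightarrow$(iii) uses continuity: $\delta_k=0$ forces $G(x)=nI_m$ $\mu$-a.e. on the product of spheres, hence everywhere there by continuity. Multi-homogeneity of degree $2$ in each $x^{(\nu)}$ then extends this to all $x$, with the zero case trivial. (iii)$\Rightarrow$(i) holds because $\|\Phi_A(x)\|_{\op}^2=\lambda_{\max}(G)=n$ on unit vectors, so $\|A\|=\sqrt n$. (i)$\Rightarrow$(ii) is the main subtle step. If $\|A\|=\sqrt n$, then $G(x)\le nI$ pointwise, while $\int\tr G\,d\mu=mn$. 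Thus $\int\tr(nI-G)\,d\mu=0$ with a nonnegative continuous integrand, forcing $G\equiv nI$ on the spheres, and hence $\delta_k=0$. The equivalence with scaled multilinear orthogonal multiplication is a restatement of (iii): $\|\Phi_A(x)^Ty\|^2=\langle G(x)y,y\rangle=n\prod\|x^{(\nu)}\|^2\|y\|^2$ for all $y$, with polarization for the converse. I expect no genuine obstacle; the only point needing care is the exact moment computation $\int x_\alpha x_\beta\,d\mu$, which relies on the product structure, i.e.\ keeping the first $q$ variables separate.
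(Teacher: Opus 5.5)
Your proof is correct. Its skeleton is the paper's: the exact average $\int\tr G\,d\mu=mn$ coming from the product structure, the identity $\tr(G^2)=\|G-nI_m\|_{\HS}^2+2n\tr G-mn^2$, and continuity plus multihomogeneity for the equivalences.

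The one genuine difference is the pointwise eigenvalue step.
\begin{itemize}
\item The paper uses $\lambda_{\max}(G)^2\ge\frac1m\tr(G^2)$, which is exactly your ``fallback'', and obtains $\|A\|^4\ge n^2(1+\delta_k(A))$.
\item Your primary route uses $\tr(G^2)\le\lambda_{\max}(G)\tr G\le\|A\|^2\tr G$. This survives integration because $\lambda_{\max}(G(x))\le\|A\|^2$ holds uniformly on the product of spheres, and it uses the exact trace average a second time. It gives $\|A\|^2\ge n(1+\delta_k(A))$, i.e.\ $\|A\|/\sqrt n\ge(1+\delta_k(A))^{1/2}$.
\end{itemize}
This stronger bound is valid with $\delta_k$ normalized exactly as stated, so your hedge and the fallback are unnecessary. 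The bound is even attained: take $q=1$, $r_1=1$, $m=n=2$ with the all-ones slice; then $\delta_k(A)=1$ and $\|A\|^2=4$.

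What your route buys is a better constant. When $\|A\|\le(1+\varepsilon)\sqrt n$ you get $\delta_k(A)\le(1+\varepsilon)^2-1$ instead of $(1+\varepsilon)^4-1$. In the trilinear case you get $\|A\|^2\ge n+\mathcal D/(mnr(r+2))$, which at least doubles the coefficient of $\mathcal D$ in \Cref{thm:defect-lower}. The paper uses this same comparison only for single slices, in \Cref{lem:spectral-slice}. The paper's route needs only that $G$ has $m$ eigenvalues and mirrors its trilinear computation, but it loses that factor.

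Your separate argument for (i)$\Rightarrow$(ii), via $nI_m-G\succeq0$ with vanishing average trace, is fine. It also follows at once from either inequality, which is how the paper argues.
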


Exact and asymptotic constructions in higher order are given as well.

\subsection*{Relation with previous work}

Without the sign restriction, Li, Nakatsukasa, Soma and Uschmajew
characterized equality in the spectral-to-Frobenius lower bound by orthogonal
tensors \cite[Theorem~3.5]{LiNakatsukasaSomaUschmajew}. The Hadamard and
Hurwitz--Radon results used below are also classical. Our contribution is to
determine exactly when this equality can be realized in the Hilbertian KSZ
problem by a tensor whose canonical coefficients are signs, and then to derive
the explicit gap and the lower bound for large $r$.

Approximate representations in normalized Hilbert--Schmidt norm are studied
by Moore--Russell and Gowers--Hatami \cite{MooreRussell,GowersHatami}. We do
not use their stability theorems. Instead, in
\Cref{sec:quantitative-obstruction} we construct a finite group from
the almost anticommuting matrices and use matrix-valued Fourier analysis to
obtain the dimension bound directly. This argument is related to the
Clifford-algebra dimension arguments of Regev--Vidick \cite{RegevVidick}.
The higher-order result follows from the same fourth-moment calculation,
integrated over a product of spheres.

\medskip
\noindent\textit{Organization.}
\Cref{sec:preliminaries} contains the matrix notation and the
fourth-moment estimate. \Cref{sec:exact-extremizers} proves the exact
square result and the discrete gap. \Cref{sec:quantitative-obstruction}
proves the lower bound for large $r$, and
\Cref{sec:subcritical-construction} gives the asymptotic construction.
\hyperref[thm:intro-higher-order-main]{Main Theorem~B} is proved in \Cref{sec:higher-order-defect}.
\Cref{sec:higher-order-constructions} and Appendix~A contain higher-order
constructions and consequences for best rank-one approximation ratios,
respectively.

\section{Preliminaries and the fourth-moment estimate}\label{sec:preliminaries}

We introduce the coefficient matrices associated with a unimodular
form and recall the facts about Hadamard matrices and Hurwitz--Radon families
used later. For a form $A$, we write $M(x)$ for the corresponding
linear combination of its sign-matrix slices.

All coefficient matrices and multilinear forms below are real. In the Fourier
argument of \Cref{sec:quantitative-obstruction}, we temporarily pass to complex matrix
algebras; this does not change the scalar field of the extremal problem. We
write $I_{d}$ for
the $d\times d$ identity matrix and $\langle\cdot,\cdot\rangle$ for the
Euclidean inner product. For $x\in\R^{d}$, $\| x\|_{2}$ denotes
its Euclidean norm. For $T=(t_{jk})\in\R^{m\times n}$, we use
\[
\| T\|_{\op}:=\sup_{\| z\|_{2}\leq1}\| Tz\|_{2}
\]
and
\[
\| T\|_{\HS}:=\left(  \sum_{j=1}^{m}\sum_{k=1}^{n}|t_{jk}
|^{2}\right)  ^{1/2}=\left(  \tr(TT^{T})\right)  ^{1/2}.
\]
The unsubscripted notation $\| A\|$ is reserved for the norm of a
multilinear form. All asymptotic statements involving $\log_{2}n$ are
understood for integers $n\geq2$.

\subsection{Coefficient operators associated with a unimodular form}

\begin{definition}
Let $r,m,n\in\N$. A tensor
\[
\mathcal{E}=(\varepsilon_{ijk})\in\{-1,1\}^{r\times m\times n}
\]
is called a \emph{sign tensor}. Its associated trilinear form is
\[
A_{\mathcal{E}}(x,y,z)=\sum_{i=1}^{r}\sum_{j=1}^{m}\sum_{k=1}^{n}
\varepsilon_{ijk}x_{i}y_{j}z_{k}.
\]
We also call $A_{\mathcal{E}}$ a real \emph{unimodular trilinear form}. The
spectral norm of $\mathcal{E}$ is
\[
\|\mathcal{E}\|_{\sigma}:=\sup\left\{  |A_{\mathcal{E}}
(x,y,z)|:\left\| x\right\| _{2}\leq1,\ \left\| y\right\| _{2}
\leq1,\ \left\| z\right\| _{2}\leq1\right\}  .
\]
Thus $\|\mathcal{E}\|_{\sigma}=\| A_{\mathcal{E}}\|$. For
$1\leq m\leq n$, define
\[
\Lambda(r,m,n)=\min\left\{  \left\| \mathcal{E}\right\| _{\sigma
}:\mathcal{E}\in\{-1,1\}^{r\times m\times n}\right\}  .
\]
We also use the normalized constants
\[
K(r,m,n):=\frac{\Lambda(r,m,n)}{\sqrt n},
\qquad
K_{r,n}:=K(r,n,n).
\]
Thus $K(r,m,n)\ge1$ whenever $m\le n$.

\end{definition}

The minimum exists because there are only finitely many sign tensors of a
fixed size. Given $\mathcal{E}$ and its associated form $A=A_{\mathcal{E}}$,
for each $i=1,\ldots,r$, define the $m\times n$ matrix slice
\[
M_{i}=(\varepsilon_{ijk})_{\substack{1\leq j\leq m\\1\leq k\leq n}},
\]
and, for $x=(x_{1},\ldots,x_{r})\in\R^{r}$, put
\[
M(x)=\sum_{i=1}^{r}x_{i}M_{i}.
\]
Thus $M(x)$ is a linear operator from $\R^{n}$ to $\R^{m}$.
Expanding the Euclidean inner product gives
\begin{align*}
\langle M(x)z,y\rangle &  =\sum_{j=1}^{m}\left(  \sum_{i=1}^{r}\sum_{k=1}
^{n}x_{i}\varepsilon_{ijk}z_{k}\right)  y_{j}\\
&  =\sum_{i=1}^{r}\sum_{j=1}^{m}\sum_{k=1}^{n}\varepsilon_{ijk}x_{i}y_{j}
z_{k}\\
&  =A(x,y,z).
\end{align*}
Consequently,
\begin{align*}
\| A\| &  =\sup\left\{  \left| \langle M(x)z,y\rangle\right|
:\left\| x\right\| _{2}\leq1,\ \left\| y\right\| _{2}
\leq1,\ \left\| z\right\| _{2}\leq1\right\} \\
&  =\sup_{\left\| x\right\| _{2},\left\| z\right\| _{2}\leq1}
\sup_{\left\| y\right\| _{2}\leq1}|\langle M(x)z,y\rangle|\\
&  =\sup_{\left\| x\right\| _{2},\left\| z\right\| _{2}\leq1}\|
M(x)z\|_{2}\\
&  =\sup_{\| x\|_{2}\leq1}\| M(x)\|_{\op}.
\end{align*}
Here we used
\[
\sup_{\| y\|_{2}\leq1}|\langle v,y\rangle|=\| v\|_{2}
\]
and the definition of the operator norm. Since $M(tx)=tM(x)$, the supremum in
the $x$-variable may be taken over the unit sphere. Therefore
\begin{equation}
\|A\|=\sup_{\|x\|_2=1}\|M(x)\|_{\op}. \label{eq:trilinear-norm-representation}
\end{equation}

\subsection{Exact orthogonal matrix systems}

A Hadamard matrix of order $n$ is a matrix $H\in\{-1,1\}^{n\times n}$
satisfying
\[
HH^{T}=nI_{n}.
\]
Thus the rows, and equivalently the columns, are pairwise orthogonal and have
Euclidean norm $\sqrt n$. An integer $n$ for which such a matrix exists will
be called a Hadamard order. Hadamard matrices exist in every order $2^{a}$,
and also in order $12$. If $H$ and $K$ are Hadamard matrices of orders $s$ and
$t$, respectively, then
\[
(H\otimes K)(H\otimes K)^{T} =(HH^{T})\otimes(KK^{T})=stI_{st},
\]
so $H\otimes K$ is a Hadamard matrix of order $st$; see, for example,
\cite{Horadam,Seberry}.

We also use the Hurwitz--Radon function. Its domain and range are
\[
\rho:\N\longrightarrow\N.
\]
For $n\in\N$, let $\nu_{2}(n)$ be the largest nonnegative integer $v$
such that $2^{v}$ divides $n$. Equivalently,
\[
n=2^{\nu_{2}(n)}u \qquad\text{with $u$ odd}.
\]
Write the integer $\nu_{2}(n)$ uniquely in the form
\[
\nu_{2}(n)=4a+b, \qquad a\in\N\cup\{0\},\qquad0\le b\le3.
\]
Then
\[
n=2^{4a+b}u, \qquad u\text{ odd},
\]
and the Hurwitz--Radon function is defined by
\begin{equation}
\rho(n)=8a+2^{b}. 
\end{equation}

The following elementary consequence will be used repeatedly:
\begin{equation}
\rho(n)\ge2\nu_{2}(n). \label{eq:rho-valuation-bound}
\end{equation}
Indeed, if $\nu_{2}(n)=4a+b$, then
\[
\rho(n)-2\nu_{2}(n) =(8a+2^{b})-2(4a+b)=2^{b}-2b.
\]
Since $2^b\ge 2b$ for $0\le b\le3$, this proves \eqref{eq:rho-valuation-bound}.

\begin{definition}
A family $E_{1},\ldots,E_{s}$ of $n\times n$ real matrices is a
\emph{Hurwitz--Radon family} if
\[
E_{i}^{T}=-E_{i}, \qquad E_{i}^{T}E_{i}=I_{n}, \qquad E_{i}E_{j}=-E_{j}
E_{i}\quad(i\ne j).
\]

\end{definition}

The first relation says that each $E_{i}$ is skew-symmetric, the second says
that it is orthogonal, and the third says that distinct matrices anticommute.
The first two relations also imply
\[
E_{i}^{2}=-I_{n}.
\]

The classical Hurwitz--Radon theorem says that the maximum possible number of
matrices in such a real family is
\[
\rho(n)-1.
\]
In other words,
\[
\max\bigl\{s:\text{there is a Hurwitz--Radon family of $s$ real matrices of
order $n$}\bigr\}=\rho(n)-1;
\]
see \cite{Hurwitz,Radon,Shapiro}.

For our construction, it is important that the maximum can be attained by
matrices with integer entries. Here an \emph{integral matrix} means simply a
matrix all of whose entries belong to $\mathbb{Z}$.

\begin{backgroundtheorem}[Geramita--Pullman \cite{GeramitaPullman}]
For every $n\in\N$, there exists a Hurwitz--Radon family of $\rho(n)-1$ integral matrices of order $n$.
No Hurwitz--Radon family of real, and hence no such family of integral,
matrices of order $n$ can contain more than $\rho(n)-1$ matrices.
\end{backgroundtheorem}

Thus Geramita and Pullman show that the maximum supplied by the real
Hurwitz--Radon theorem is already attained inside $M_{n}(\mathbb{Z})$. We use
only this existence statement.

One concrete consequence will be useful later. If $E\in M_{n}(\mathbb{Z})$ is
orthogonal, then $E$ is a signed permutation matrix. To see this, let $v$ be
one of its columns. Since $v$ has integer entries and $\|v\|_{2}=1$, exactly
one entry of $v$ is equal to $1$ or $-1$, and all the other entries are zero.
The orthogonality of distinct columns forces these nonzero entries to occur in
different rows. Hence every row and every column of $E$ has exactly one
nonzero entry, equal to $1$ or $-1$.

Consequently, if $H$ is a Hadamard matrix and $E$ is one of the integral
orthogonal matrices above, then $HE$ is again a sign matrix: multiplication on
the right by $E$ merely permutes the columns of $H$ and changes some column
signs. This is why the integral version of the theorem is used in \Cref{sec:exact-extremizers}.

\subsection{The fourth-moment estimate}\label{sec:fourth-moment}

The following fourth-moment estimate for the coefficient matrices gives the
universal lower bound, characterizes equality, and is also used in
\Cref{sec:quantitative-obstruction}.

\begin{proposition}[Universal Hilbertian lower bound]
For all positive integers $r$ and $1\le m\le n$,
\[
\Lambda(r,m,n)\ge\sqrt n.
\]

\end{proposition}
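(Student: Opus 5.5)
The bound can be obtained in two ways. One is a direct coordinate argument. The other is the fourth-moment argument, which the section heading suggests the authors have in mind because it also yields the equality characterization. We present the coordinate argument first, since it is immediate, and then indicate how the fourth-moment route produces the same bound together with a quantitative defect term.

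\emph{Coordinate argument.} Fix any sign tensor $\mathcal E$ with slices $M_1,\ldots,M_r$. By \eqref{eq:trilinear-norm-representation},
\[
\|A_{\mathcal E}\|=\sup_{\|x\|_2=1}\|M(x)\|_{\op}.
\]
Take $x=e_1$, so that $M(x)=M_1$. Now take $y=e_1\in\R^m$ and $z=n^{-1/2}(\varepsilon_{11k})_{k=1}^n$. Then
\[
A(e_1,y,z)=n^{-1/2}\sum_k\varepsilon_{11k}^2=\sqrt n.
\]
Hence $\|A\|\ge\sqrt n$. Minimizing over the finitely many $\mathcal E$ gives $\Lambda(r,m,n)\ge\sqrt n$.

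\emph{Fourth-moment argument.} Let $\sigma$ be the normalized surface measure on $S^{r-1}$. For every $x$ on the sphere,
\[
\|M(x)\|_{\op}^4=\|M(x)M(x)^T\|_{\op}^2\ge \frac1m\|M(x)M(x)^T\|_{\HS}^2.
\]
Integrating, we get
\[
\|A\|^4\ge\frac1m\int\|M(x)M(x)^T\|_{\HS}^2\,d\sigma(x).
\]
Write $P(x)=M(x)M(x)^T$ and $\tr P(x)=\|M(x)\|_{\HS}^2$. Since $\int x_ix_j\,d\sigma=\delta_{ij}/r$ and every entry of every $M_i$ has square $1$, we have
\[
\int\tr P\,d\sigma=\frac1r\sum_i\|M_i\|_{\HS}^2=mn.
\]
Expanding the square of the deviation from $nI_m$,
\[
\|P\|_{\HS}^2=\|P-nI_m\|_{\HS}^2+2n\tr P-n^2m.
\]
Integrating this identity gives
\[
\frac1m\int\|P\|_{\HS}^2\,d\sigma=n^2+\frac1m\int\|P-nI_m\|_{\HS}^2\,d\sigma\ge n^2.
\]
Therefore $\|A\|^4\ge n^2(1+\delta)$, where $\delta$ is the normalized defect. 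In particular $\|A\|\ge\sqrt n$. The main step is the averaging identity $\int\tr P\,d\sigma=mn$, which relies on the isotropy of $\sigma$ and on the unimodularity of the entries. Once that is in place, the remaining algebra is routine.
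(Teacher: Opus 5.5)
Your proof is correct, and your first argument is essentially the paper's, while your second is a different and heavier route.

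\textbf{Coordinate argument.} The paper also restricts to $x=e_i$. It bounds $\|M_i\|_{\op}$ from below through singular values: $\|M_i\|_{\HS}^2=mn$ and $\|M_i\|_{\HS}^2\le\rank(M_i)\,\|M_i\|_{\op}^2\le m\|M_i\|_{\op}^2$. You instead exhibit a single row as an explicit witness, $\|M_1\|_{\op}\ge\|M_1^Te_1\|_2=\sqrt n$. Every row has squared norm exactly $n$, so the two bounds coincide here. Yours is marginally more elementary and is exactly the ``fix all variables but one at coordinate vectors'' argument from the introduction.

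\textbf{Fourth-moment argument.} This is also correct, but contrary to your guess it is not how the paper proves this proposition. The fourth-moment estimate is the separate \Cref{thm:defect-lower} that follows it. What you wrote is precisely the paper's proof of \Cref{thm:higher-order-defect} in the case $q=1$. The mean-trace identity $\int\tr P\,d\sigma=mn$ together with $\|P\|_{\HS}^2=\|P-nI_m\|_{\HS}^2+2n\tr P-mn^2$ lets you avoid the spherical fourth moments of \Cref{lem:spherical-moments} entirely.

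What your route does not provide is the identification of the defect integral $\int\|P-nI_m\|_{\HS}^2\,d\sigma$ with $\mathcal D(M_1,\dots,M_r)/(r(r+2))$, written in terms of the individual $C_i$ and $D_{ij}$. The paper needs that expanded form for two later results:
\begin{itemize}
\item the integral gap of \Cref{cor:integral-defect-gap}, which uses that $\mathcal D$ is a nonnegative integer;
\item the separate control of each relation in \Cref{thm:analytic-defect}, which feeds the Section~4 argument.
\end{itemize}
For the proposition itself, no averaging is needed.
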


\begin{proof}
Let $A$ have associated matrices $M_{1},\ldots,M_{r}$. Fix $i$, and let
$s_{1},\ldots,s_{q}$ be the nonzero singular values of $M_{i}$, where
$q=\rank(M_{i})\le m$. Then
\[
\max_{1\le \nu \le q}s_{\nu} = \|M_{i}\|_{\op}.
\]
Since $M_{i}$ has $mn$ entries of modulus
one,
\[
\|M_{i}\|_{\HS}^{2}=mn.
\]
On the other hand,
\[
\|M_{i}\|_{\HS}^{2} =\sum_{\nu=1}^{q} s_{\nu}^{2} \le q\max_{1\le
\nu\le q}s_{\nu}^{2} = q\|M_{i}\|_{\op}^{2} \le m\|M_{i}\|_{\op}^{2}.
\]
Therefore
\[
\|M_{i}\|_{\op}^{2}\ge\frac{\|M_{i}\|_{\HS}^{2}}{m}=n.
\]
Taking $x=e_{i}$ in \eqref{eq:trilinear-norm-representation}, we obtain
\[
\|A\|\ge\|M(e_{i})\|_{\op}=\|M_{i}\|_{\op}\ge\sqrt n.
\]
Minimizing over all sign tensors gives the stated bound.
\end{proof}

Before proving the next estimate, we record the spherical averages that will
be used. Let
\[
S^{r-1}=\{x\in\R^{r}:\|x\|_{2}=1\}.
\]
Let $\sigma$ be the normalized surface measure on $S^{r-1}$. For
$r=1$, we use the uniform probability measure on
$S^0=\{-1,1\}$.

The measure $\sigma$ is invariant under every orthogonal transformation. In
particular, changing the sign of one coordinate does not change the measure.
It follows immediately that the integral of a monomial is zero whenever one of
its coordinates occurs with an odd exponent.

\begin{lemma}
\label{lem:spherical-moments} For every $1\le i\le r$,
\[
\int_{S^{r-1}}x_{i}^{4}\,d\sigma(x)=\frac{3}{r(r+2)}.
\]
For every pair of distinct indices $1\le i\ne j\le r$,
\[
\int_{S^{r-1}}x_{i}^{2}x_{j}^{2}\,d\sigma(x)=\frac{1}{r(r+2)}.
\]
Moreover, every monomial containing an odd power of some coordinate has
integral zero.
\end{lemma}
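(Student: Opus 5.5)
The odd-exponent statement follows from the invariance of $\sigma$ already noted above. Suppose a monomial $x^\beta$ has $\beta_i$ odd. The reflection $R_i$ that flips the sign of the $i$-th coordinate preserves $\sigma$. Hence
\[
\int x^\beta\,d\sigma=\int (R_ix)^\beta\,d\sigma=-\int x^\beta\,d\sigma,
\]
so the integral is zero. This also covers $r=1$ with the uniform measure on $\{-1,1\}$.

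For the two even moments, we set $a:=\int x_i^4\,d\sigma$ and $b:=\int x_i^2x_j^2\,d\sigma$ ($i\ne j$). Permutation invariance of $\sigma$ shows that these numbers do not depend on the chosen indices. We need two linear relations between $a$ and $b$.

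\emph{First relation.} Integrate $\|x\|_2^4=1$ over the sphere and expand $(\sum_i x_i^2)^2$. This gives
\[
ra+r(r-1)b=1.
\]

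\emph{Second relation.} Use rotation invariance under the map sending $(x_1,x_2)$ to $\bigl((x_1+x_2)/\sqrt2,(x_1-x_2)/\sqrt2\bigr)$ and fixing the other coordinates. This requires $r\ge2$; the case $r=1$ is trivial, since then $a=1=3/(1\cdot3)$ and there is no $b$. The rotation gives
\[
a=\int\Bigl(\tfrac{x_1+x_2}{\sqrt2}\Bigr)^4d\sigma=\tfrac14\,(2a+6b),
\]
because the odd cross terms vanish by the first step. Hence $a=3b$.

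Substituting $a=3b$ into the first relation gives $b\,r(r+2)=1$. Therefore $b=1/(r(r+2))$ and $a=3/(r(r+2))$.

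The only point needing care is the second relation. If one prefers to avoid the rotation trick, an alternative is the Gaussian comparison. Write $g=\|g\|\,\theta$ with $\theta$ uniform on the sphere and independent of $\|g\|$. Then $\mathbb E g_i^4=3$ and $\mathbb E g_i^2g_j^2=1$, while $\mathbb E\|g\|^4=r(r+2)$. Dividing gives the same values directly. Either route is routine.
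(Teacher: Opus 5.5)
Your proposal is correct and follows the paper's proof step for step: the coordinate reflection kills odd monomials, integrating $(\sum_i x_i^2)^2=1$ gives $ra+r(r-1)b=1$, and comparing $x_1$ with $(x_1+x_2)/\sqrt2$ gives $a=3b$. One small terminological point is that your map has determinant $-1$, so it is an orthogonal reflection rather than a rotation, but invariance of $\sigma$ under all orthogonal maps is exactly what is needed.
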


\begin{proof}
The assertion about odd powers follows from the reflection
\[
(x_{1},\ldots,x_{i},\ldots,x_{r}) \longmapsto(x_{1},\ldots,-x_{i},\ldots
,x_{r}),
\]
which preserves $S^{r-1}$ and $\sigma$ but changes the sign of the monomial.

For $r=1$, only the first displayed formula is relevant, and it is
immediate. Assume $r\ge2$. By symmetry, the values
\[
a:=\int_{S^{r-1}}x_{1}^{4}\,d\sigma(x), \qquad b:=\int_{S^{r-1}}x_{1}^{2}
x_{2}^{2}\,d\sigma(x)
\]
do not depend on the choice of distinct coordinates. Since $\sum_{i=1}^{r}
x_{i}^{2}=1$ on the sphere,
\begin{align*}
1  &  =\int_{S^{r-1}}\left(  \sum_{i=1}^{r} x_{i}^{2}\right)  ^{2}d\sigma(x)\\
&  =r a+r(r-1)b.
\end{align*}

The random variables $x_{1}$ and $(x_{1}+x_{2})/\sqrt2$ have the same
distribution with respect to $\sigma$, because the latter is obtained from the
former by an orthogonal change of coordinates. Therefore
\begin{align*}
a  &  =\int_{S^{r-1}}\left(  \frac{x_{1}+x_{2}}{\sqrt2}\right)  ^{4}
d\sigma(x)\\
&  =\frac14\int_{S^{r-1}} \bigl(x_{1}^{4}+4x_{1}^{3}x_{2}+6x_{1}^{2}x_{2}
^{2}+4x_{1}x_{2}^{3}+x_{2}^{4}\bigr)d\sigma(x)\\
&  =\frac14(2a+6b),
\end{align*}
where the two terms containing odd powers integrate to zero. Hence $a=3b$.
Substituting this into $1=ra+r(r-1)b$ gives
\[
r(3b)+r(r-1)b=r(r+2)b=1.
\]
Thus
\[
b=\frac1{r(r+2)}, \qquad a=\frac3{r(r+2)}.
\]

\end{proof}

For $i=1,\ldots,r$ and $1\le i<j\le r$, define
\begin{equation}
C_{i}=M_{i}M_{i}^{T}-nI_{m}, \label{eq:diagonal-defect}
\end{equation}
and
\begin{equation}
D_{ij}=M_{i}M_{j}^{T}+M_{j}M_{i}^{T}. \label{eq:pair-defect}
\end{equation}
Set
\begin{equation}
\mathcal{D}(M_{1},\ldots,M_{r}) =2\sum_{i=1}^{r}\|C_{i}\|_{\HS}^{2}
+\left\|  \sum_{i=1}^{r}C_{i}\right\|  _{\HS}^{2} +\sum_{1\le i<j\le
r}\|D_{ij}\|_{\HS}^{2}. \label{eq:defect}
\end{equation}

\begin{theorem}[Fourth-moment estimate]\label{thm:defect-lower}
Let $r,m,n$ be positive integers with $m\le n$,
and let $A$ be a unimodular trilinear form on $\ell_{2}^{r}\times\ell_{2}
^{m}\times\ell_{2}^{n}$. If $M_{1},\ldots,M_{r}$ are its associated matrices,
then
\begin{equation}
\|A\|^{4}\ge n^{2}+ \frac{\mathcal{D}(M_{1},\ldots,M_{r})}{mr(r+2)}.
\label{eq:fourth-moment-defect}
\end{equation}
In particular, $\|A\|=\sqrt n$ if and only if
\begin{equation}
M_{i}M_{j}^{T}+M_{j}M_{i}^{T}=2n\delta_{ij}I_{m} \qquad(1\le i,j\le r),
\label{eq:exact-matrix-relations}
\end{equation}
where $\delta_{ij}$ is the Kronecker delta.
\end{theorem}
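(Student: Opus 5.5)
Our plan is to compare the supremum of $\|M(x)\|_{\op}^4$ on the sphere with a spherical average of a trace. For $x\in S^{r-1}$ put $P(x)=M(x)M(x)^T$, a positive semidefinite $m\times m$ matrix. Its largest eigenvalue is $\|M(x)\|_{\op}^2\le\|A\|^2$ by \eqref{eq:trilinear-norm-representation}. Hence
\[
\tr\bigl(P(x)^2\bigr)\le m\,\|A\|^4 .
\]
Integrating against $\sigma$ gives
\[
\|A\|^4\ge\frac1m\int_{S^{r-1}}\tr\bigl(P(x)^2\bigr)\,d\sigma(x).
\]
So the whole task is to compute this integral exactly and to show that it equals $mn^2+\mathcal D/(r(r+2))$.

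To compute it, we write $P(x)-nI_m=\sum_i x_i^2C_i+\sum_{i<j}x_ix_jD_{ij}$; this uses $\|x\|_2=1$ to rewrite $nI_m=n\sum_i x_i^2I_m$. Then
\[
\tr(P^2)=\|P-nI_m\|_{\HS}^2+2n\tr(P-nI_m)+n^2m .
\]

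\emph{The linear term.} The matrices $D_{ij}$ are traceless, because $\tr(M_iM_j^T)$ is a sum of products of signs. This is not needed anyway, since the term $x_ix_j$ integrates to zero by \Cref{lem:spherical-moments}. The remaining part of the linear term is $\sum_i\tr C_i\int x_i^2$, and it vanishes because $\tr C_i=\|M_i\|_{\HS}^2-nm=0$.

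\emph{The quadratic term.} Expanding $\|\sum x_i^2C_i+\sum x_ix_jD_{ij}\|_{\HS}^2$ and applying \Cref{lem:spherical-moments}, the only monomials that survive are $x_i^4$, $x_i^2x_j^2$ from pairs $(C_i,C_j)$ with $i\ne j$, and $x_i^2x_j^2$ from $(D_{ij},D_{ij})$. Cross terms between $C$'s and $D$'s, and between distinct $D$'s, involve odd powers. This gives
\[
\frac{1}{r(r+2)}\Bigl(3\sum_i\|C_i\|_{\HS}^2+\sum_{i\ne j}\langle C_i,C_j\rangle+\sum_{i<j}\|D_{ij}\|_{\HS}^2\Bigr),
\]
and since $3\sum\|C_i\|^2+\sum_{i\ne j}\langle C_i,C_j\rangle=2\sum\|C_i\|^2+\|\sum C_i\|^2$, this is exactly $\mathcal D/(r(r+2))$. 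This bookkeeping is the main place to be careful.

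For the equality statement: if \eqref{eq:exact-matrix-relations} holds, then $P(x)=n\|x\|^2I_m$, so $\|M(x)\|_{\op}=\sqrt n$ on the sphere and $\|A\|=\sqrt n$. Conversely, if $\|A\|=\sqrt n$, the inequality forces $\mathcal D=0$, because $\mathcal D$ is a sum of nonnegative terms. Hence all $C_i=0$ and all $D_{ij}=0$, which is exactly \eqref{eq:exact-matrix-relations}, with the diagonal case $i=j$ reading $2M_iM_i^T=2nI_m$.

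I expect no real obstacle beyond correctly tracking the combinatorial coefficients in the quadratic term. In particular, the $D_{ij}$ terms appear once per unordered pair with coefficient $1$, since $\|x_ix_jD_{ij}\|^2$ integrates to $1/(r(r+2))$.
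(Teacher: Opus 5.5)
Your proposal is correct and follows the paper's proof essentially step for step. It uses the bound $\lambda_{\max}(P(x))^2\ge\frac1m\tr(P(x)^2)$, averages over $\sigma$, kills the linear term via $\tr C_i=0$ and $\int x_ix_j\,d\sigma=0$, and does the same moment bookkeeping to obtain $\mathcal D/(r(r+2))$; the equality case is handled the same way as in the paper.

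One aside is false: $D_{ij}$ need not be traceless, since a sum of $mn$ signs need not vanish (for instance $M_i=M_j$ gives $\tr D_{ij}=2mn$). As you yourself note, this claim is never used.
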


\begin{proof}
Let
\[
S^{r-1}=\{x\in\R^{r}:\|x\|_{2}=1\}
\]
and, for $x\in S^{r-1}$, put
\[
T(x)=M(x)M(x)^{T}.
\]
Using \eqref{eq:diagonal-defect} and \eqref{eq:pair-defect}, we have
\[
T(x)=nI_{m}+R(x),
\]
where
\[
R(x)=\sum_{i=1}^{r} x_{i}^{2}C_{i} +\sum_{1\le i<j\le r}x_{i}x_{j}D_{ij}.
\]

Let $\lambda_{1}(x),\ldots,\lambda_{m}(x)\ge0$ be the eigenvalues of $T(x)$.
Since
\[
\|M(x)\|_{\op}^{2}=\lambda_{\max}(T(x)),
\]
we obtain
\begin{align*}
\|M(x)\|_{\op}^{4}  &  =\lambda_{\max}(T(x))^{2}\\
&  \ge\frac1m\sum_{\nu=1}^{m}\lambda_{\nu}(x)^{2}\\
&  =\frac1m\tr(T(x)^{2}).
\end{align*}
Since $\sigma$ is a probability measure, the supremum of a function is at
least its average. Therefore
\begin{align*}
\|A\|^{4}  &  =\sup_{x\in S^{r-1}}\|M(x)\|_{\op}^{4}\\
&  \ge\frac1m\int_{S^{r-1}}\tr(T(x)^{2})\,d\sigma(x).
\end{align*}
Because $R(x)$ is symmetric, $\tr(R(x)^{2}
)=\|R(x)\|_{\HS}^{2}$, and hence
\begin{align*}
\|A\|^{4}  &  \ge\frac1m\int_{S^{r-1}} \tr\bigl((nI_{m}
+R(x))^{2}\bigr)\,d\sigma(x)\\
&  =n^{2}+ \frac{2n}{m}\int_{S^{r-1}}\tr R(x)\,d\sigma(x)
+\frac1m\int_{S^{r-1}}\|R(x)\|_{\HS}^{2}\,d\sigma(x).
\end{align*}
Now
\[
\tr C_{i}=\tr(M_{i}M_{i}^{T})-nm=\|M_{i}
\|_{\HS}^{2}-nm=0.
\]
Moreover, $\int_{S^{r-1}}x_{i}x_{j}\,d\sigma(x)=0$ for $i\ne j$. Therefore
\[
\int_{S^{r-1}}\tr R(x)\,d\sigma(x)=0.
\]

To finish the estimate, expand the last integral. With $\langle X,Y\rangle_{\HS
}=\tr(XY^{T})$, one has
\begin{align*}
\|R(x)\|_{\HS}^{2}  &  =\left\|  \sum_{i} x_{i}^{2}C_{i}+\sum
_{i<j}x_{i}x_{j}D_{ij}\right\|  _{\HS}^{2}\\
&  =\sum_{i} x_{i}^{4}\|C_{i}\|_{\HS}^{2} +2\sum_{i<j}x_{i}^{2}
x_{j}^{2}\langle C_{i},C_{j}\rangle_{\HS}\\
&  \quad+\sum_{i<j}x_{i}^{2}x_{j}^{2}\|D_{ij}\|_{\HS}^{2}
+\mathcal{R}(x),
\end{align*}
where every monomial occurring in $\mathcal{R}(x)$ contains an odd power of at
least one coordinate. Indeed, the mixed products between a term $x_{i}
^{2}C_{i}$ and a term $x_{j}x_{k}D_{jk}$ contain either $x_{i}^{3}x_{k}$ or
$x_{i}^{2}x_{j}x_{k}$, and products of two distinct terms $x_{i}x_{j}D_{ij}$
and $x_{k}x_{\ell}D_{k\ell}$ also contain an odd power unless the two pairs
are the same. Hence \autoref{lem:spherical-moments} gives
\begin{align*}
\int_{S^{r-1}}\|R(x)\|_{\HS}^{2}\,d\sigma(x)  &  =\frac{3}{r(r+2)}
\sum_{i=1}^{r}\|C_{i}\|_{\HS}^{2}\\
&  \quad+\frac{2}{r(r+2)} \sum_{1\le i<j\le r}\langle C_{i},C_{j}
\rangle_{\HS}\\
&  \quad+\frac{1}{r(r+2)} \sum_{1\le i<j\le r}\|D_{ij}\|_{\HS}^{2}.
\end{align*}
Since
\[
\left\|  \sum_{i=1}^{r}C_{i}\right\|  _{\HS}^{2} =\sum_{i=1}
^{r}\|C_{i}\|_{\HS}^{2} +2\sum_{1\le i<j\le r}\langle C_{i}
,C_{j}\rangle_{\HS},
\]
the preceding expression equals
\[
\frac{\mathcal{D}(M_{1},\ldots,M_{r})}{r(r+2)}.
\]
Substitution gives \eqref{eq:fourth-moment-defect}.

Suppose first that $\|A\|=\sqrt n$. Then \eqref{eq:fourth-moment-defect} implies $\mathcal{D}
(M_{1},\ldots,M_{r})=0$. Every term in \eqref{eq:defect} is nonnegative, so $C_{i}=0$
for every $i$ and $D_{ij}=0$ for every $i<j$. These are exactly the relations \eqref{eq:exact-matrix-relations}.

Conversely, assume \eqref{eq:exact-matrix-relations}. Then, for every $x\in\R^{r}$,
\begin{align*}
M(x)M(x)^{T}  &  =\sum_{i,j=1}^{r} x_{i}x_{j}M_{i}M_{j}^{T}\\
&  =\sum_{i=1}^{r} x_{i}^{2}M_{i}M_{i}^{T} +\sum_{1\le i<j\le r}x_{i}
x_{j}(M_{i}M_{j}^{T}+M_{j}M_{i}^{T})\\
&  =n\sum_{i=1}^{r} x_{i}^{2} I_{m}\\
&  =n\|x\|_{2}^{2}I_{m}.
\end{align*}
Hence $\|M(x)\|_{\op}=\sqrt n\,\|x\|_{2}$, and \eqref{eq:trilinear-norm-representation} gives
$\|A\|=\sqrt n$.
\end{proof}

\begin{remark}
The relations \eqref{eq:exact-matrix-relations} are equivalent to
\[
M(x)M(x)^{T}=n\|x\|_{2}^{2}I_{m} \qquad(x\in\R^{r}).
\]
Thus, for $x\ne0$, all $m$ singular values of $M(x)$ are equal to $\sqrt
n\,\|x\|_{2}$. If these singular values are denoted by $s_{1}(x),\ldots
,s_{m}(x)$, then
\[
\tr\bigl((M(x)M(x)^{T})^{2}\bigr)=\sum_{\nu=1}^{m} s_{\nu
}(x)^{4}.
\]
For a fixed value of $\sum_{\nu=1}^{m} s_{\nu}(x)^{2}$, this fourth-power sum
is minimized precisely when all the singular values are equal. This is the
reason for using the fourth power in \Cref{thm:defect-lower}.
\end{remark}

\begin{corollary}[An integral gap]\label{cor:integral-defect-gap}
If no family of sign matrices $M_{1},\ldots,M_{r}
\in\{-1,1\}^{m\times n}$ satisfies \eqref{eq:exact-matrix-relations}, then
\[
\Lambda(r,m,n)\ge\left(  n^{2}+\frac{1}{mr(r+2)}\right)  ^{1/4}>\sqrt n.
\]

\end{corollary}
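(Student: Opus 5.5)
The plan is to apply \Cref{thm:defect-lower} to an arbitrary sign tensor and show that, under the hypothesis, the defect $\mathcal{D}(M_1,\ldots,M_r)$ is a \emph{positive integer}. Then $\mathcal{D}\ge1$, and \eqref{eq:fourth-moment-defect} gives $\|A\|^4\ge n^2+1/(mr(r+2))$. Minimizing over the finitely many sign tensors yields the bound on $\Lambda(r,m,n)$, and strict inequality over $\sqrt n$ is immediate because $1/(mr(r+2))>0$.

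The steps, in order, are as follows. First, fix any $\mathcal{E}\in\{-1,1\}^{r\times m\times n}$ with slices $M_1,\ldots,M_r$. Second, note that $C_i=M_iM_i^T-nI_m$ and $D_{ij}=M_iM_j^T+M_jM_i^T$ have integer entries, since all $M_i$ are integer matrices. It follows that $\sum_i C_i$ is integral too, so each of $\|C_i\|_{\HS}^2$, $\|\sum_iC_i\|_{\HS}^2$ and $\|D_{ij}\|_{\HS}^2$ is a sum of squares of integers, hence a nonnegative integer. By \eqref{eq:defect}, $\mathcal{D}$ is therefore a nonnegative integer. Third, by hypothesis the relations \eqref{eq:exact-matrix-relations} fail for these $M_i$. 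So either some $C_i\neq0$ or some $D_{ij}\ne0$ with $i<j$; the diagonal case $i=j$ of \eqref{eq:exact-matrix-relations} is just $C_i=0$. In either case some term of $\mathcal{D}$ is a positive integer, hence $\mathcal{D}\ge1$. Fourth, insert this into \eqref{eq:fourth-moment-defect} and take fourth roots.

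There is no real obstacle here. The only point needing care is checking that failure of \eqref{eq:exact-matrix-relations} really forces one of the summands in \eqref{eq:defect} to be nonzero. This is exactly the equivalence already established in the proof of \Cref{thm:defect-lower}: $\mathcal{D}=0$ if and only if all $C_i$ and $D_{ij}$ vanish. Integrality then upgrades ``nonzero'' to ``at least $1$''.
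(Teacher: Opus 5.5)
Your proposal is correct and follows essentially the same route as the paper. The entries of $C_i$, $D_{ij}$ and $\sum_i C_i$ are integers, so $\mathcal{D}(M_1,\ldots,M_r)$ is a nonnegative integer; the hypothesis forces it to be nonzero and hence at least $1$, and \Cref{thm:defect-lower} then yields the bound for every sign tensor and therefore for $\Lambda(r,m,n)$.
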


\begin{proof}
Every term in \eqref{eq:defect} is the square of the Hilbert--Schmidt norm of an
integral matrix. Hence $\mathcal{D}(M_{1},\ldots,M_{r})$ is a nonnegative
integer. Under the hypothesis it cannot vanish, so it is at least $1$. Apply
\Cref{thm:defect-lower}.
\end{proof}

\begin{remark}
The lower bound in \Cref{cor:integral-defect-gap} is small in general, and
we do not claim that it is optimal. We only use it as an explicit separation
from $\sqrt n$.
\end{remark}

\section{Proof of Main Theorem A(i): exact extremizers and a discrete gap}\label{sec:exact-extremizers}

In the square case, each slice is a Hadamard matrix and the relative products
form a Hurwitz--Radon family. This yields the exact classification, while
integrality gives a gap above the minimum.

\begin{theorem}[Exact square extremizers]
\label{thm:exact-square} Let $r,n\in\N$.
The following are equivalent:

\begin{enumerate}[label=\textnormal{(\roman*)},leftmargin=2.2em,itemsep=.35em]
\item $\Lambda(r,n,n)=\sqrt n$;

\item there exist sign matrices $M_{1},\ldots,M_{r}\in\{-1,1\}^{n\times n}$
satisfying
\[
M_{i}M_{j}^{T}+M_{j}M_{i}^{T}=2n\delta_{ij}I_{n};
\]

\item a Hadamard matrix of order $n$ exists and $r\le\rho(n)$.
\end{enumerate}
\end{theorem}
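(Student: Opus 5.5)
The plan is to prove (i)$\Leftrightarrow$(ii)$\Rightarrow$(iii)$\Rightarrow$(ii).

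\emph{(i)$\Leftrightarrow$(ii).} This is \Cref{thm:defect-lower} with $m=n$. Since only finitely many sign tensors exist, $\Lambda(r,n,n)=\sqrt n$ means some sign form attains $\|A\|=\sqrt n$. By the equality clause, this happens exactly when its slices satisfy \eqref{eq:exact-matrix-relations}.

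\emph{(ii)$\Rightarrow$(iii).} Taking $i=j$ in (ii) gives $M_iM_i^T=nI_n$, so each $M_i$ is a Hadamard matrix of order $n$. Set $H=M_1$ and, for $i\ge2$,
\[
E_i:=\tfrac1n M_iM_1^T.
\]
Each $E_i$ is a product of two orthogonal matrices, hence orthogonal. The relation for the pair $(1,i)$ gives
\[
M_1M_i^T=-M_iM_1^T,
\]
so $E_i^T=-E_i$.

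For anticommutation with $i\neq j$, both $\ge2$, use $M_1^TM_1=nI_n$, valid since $M_1$ is a square Hadamard matrix. Then
\[
n^2E_iE_j=M_iM_1^TM_jM_1^T=-M_i M_1^T M_1 M_j^T=-nM_iM_j^T.
\]
Adding the same identity with $i$ and $j$ exchanged gives
\[
n^2(E_iE_j+E_jE_i)=-n\bigl(M_iM_j^T+M_jM_i^T\bigr)=0.
\]
So $E_2,\dots,E_r$ is a Hurwitz--Radon family of $r-1$ real matrices. The upper bound in the Background Theorem then gives $r-1\le\rho(n)-1$.

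\emph{(iii)$\Rightarrow$(ii).} Let $H$ be a Hadamard matrix of order $n$. By the Background Theorem there is an integral Hurwitz--Radon family $E_1,\dots,E_{\rho(n)-1}$. As shown above, each $E_i$ is a signed permutation matrix, so every product $HE$ with $E$ such a matrix is again a sign matrix. Put $M_1=H$ and $M_{i+1}=HE_i^T$ for $1\le i\le r-1\le\rho(n)-1$. Then
\[
M_{i+1}M_{i+1}^T=HE_i^TE_iH^T=nI_n.
\]
Skew-symmetry of $E_i$ gives
\[
M_1M_{i+1}^T+M_{i+1}M_1^T=H(E_i+E_i^T)H^T=0.
\]
For $i\neq j$, skew-symmetry turns $E_i^TE_j+E_j^TE_i$ into $-(E_iE_j+E_jE_i)$, so
\[
M_{i+1}M_{j+1}^T+M_{j+1}M_{i+1}^T=H(E_i^TE_j+E_j^TE_i)H^T=0.
\]
This establishes (ii).

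The only nontrivial point is that the integral version of Hurwitz--Radon is needed so that the constructed slices are sign matrices. This is supplied by Geramita--Pullman together with the signed-permutation observation, so there is no real obstacle beyond checking the algebra. The case $r=1$ is consistent throughout, since $\rho(n)\ge1$.
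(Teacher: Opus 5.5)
Your proof is correct and follows the paper's argument. Both use the equality case of the fourth-moment estimate for (i)$\Leftrightarrow$(ii), show that relative products of the Hadamard slices form a Hurwitz--Radon family for (ii)$\Rightarrow$(iii), and multiply $H$ by an integral (signed-permutation) Hurwitz--Radon family for the converse; the only cosmetic difference is that you use $\tfrac1n M_iM_1^T$, which is skew-symmetric directly from the $(1,i)$ relation, whereas the paper uses $U_1^TU_i$ and gets skew-symmetry from $(U_i^TU_j)^2=-I_n$.
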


\begin{proof}
The equivalence of (i) and (ii) follows from \Cref{thm:defect-lower}. Assume (ii) and put
\[
U_{i}=n^{-1/2}M_{i}.
\]
Then every $U_{i}$ is orthogonal, and for $i\ne j$,
\begin{equation}
U_{i}U_{j}^{T}+U_{j}U_{i}^{T}=0. \label{eq:normalized-cross-relations}
\end{equation}
In particular, every $M_{i}$ is a Hadamard matrix.

Fix $i\ne j$. Multiplying \eqref{eq:normalized-cross-relations} on the left by $U_{i}^{T}$ and on the
right by $U_{j}$ gives
\[
I_{n}+(U_{i}^{T}U_{j})^{2}=0.
\]
Since $U_{i}^{T}U_{j}$ is orthogonal,
\[
(U_{i}^{T}U_{j})^{T}=(U_{i}^{T}U_{j})^{-1}=-(U_{i}^{T}U_{j}),
\]
so $U_{i}^{T}U_{j}$ is skew-symmetric.

For $i=2,\ldots,r$, define
\[
E_{i}=U_{1}^{T}U_{i}.
\]
Each $E_{i}$ is orthogonal and skew-symmetric. Moreover, for $2\le i\ne j\le
r$,
\[
E_{i}^{T}E_{j}+E_{j}^{T}E_{i} =U_{i}^{T}U_{j}+U_{j}^{T}U_{i}=0.
\]
Since $E_{i}^{T}=-E_{i}$, this identity is equivalent to $E_{i}E_{j}
=-E_{j}E_{i}$. Thus $E_{2},\ldots,E_{r}$ form a Hurwitz--Radon family of
cardinality $r-1$. Hence
\[
r-1\le\rho(n)-1,
\]
so $r\le\rho(n)$.

Conversely, assume (iii). Let $H$ be a Hadamard matrix of order $n$. The
Geramita--Pullman theorem supplies $\rho(n)-1$ integral orthogonal,
skew-symmetric, pairwise anticommuting matrices of order $n$. Since $r\le
\rho(n)$, we may select $r-1$ of them and denote them by $E_{2},\ldots,E_{r}$.
Put $E_{1}=I_{n}$ and
\[
M_{i}=HE_{i}, \qquad i=1,\ldots,r.
\]
Each $E_{i}$ is a signed permutation matrix. Consequently, $HE_{i}$ is
obtained from $H$ by permuting columns and changing some column signs, so
every $M_{i}$ is a sign matrix. The Hurwitz--Radon relations give
\[
E_{i}E_{j}^{T}+E_{j}E_{i}^{T}=2\delta_{ij}I_{n}.
\]
Therefore
\[
M_{i}M_{j}^{T}+M_{j}M_{i}^{T} =H(E_{i}E_{j}^{T}+E_{j}E_{i}^{T})H^{T}
=2n\delta_{ij}I_{n}.
\]
Thus (ii) holds.
\end{proof}

\subsection{An explicit gap above the minimum}

The general estimate in \Cref{sec:fourth-moment} gives a strict lower bound whenever the
relations \eqref{eq:exact-matrix-relations} fail. In the square case, a direct argument using one or
two of the matrices $M_{i}$ gives a stronger separation that is independent of
$r$. In this subsection, $m=n$, and $C_{i},D_{ij}$ are the matrices defined in
\eqref{eq:diagonal-defect} and \eqref{eq:pair-defect}. For distinct indices $i,j$, let $L_{ij}$ denote the norm of the restriction of
$A$ to $\Span\{e_i,e_j\}\times\ell_2^n\times\ell_2^n$.

\begin{lemma}
\label{lem:square-gap-estimates} Let $A$ be a unimodular trilinear form on
$\ell_{2}^{r}\times\ell_{2}^{n}\times\ell_{2}^{n}$, with associated matrices
$M_{1},\ldots,M_{r}$.

\begin{enumerate}
[label=\textup{(\alph*)}]

\item For every $i$,
\[
\|M_{i}\|_{\op}^{4} \ge n^{2}+\frac{\|C_{i}\|_{\HS}^{2}}{n}.
\]

\item If $C_{i}=C_{j}=0$ for two distinct indices $i,j$, then
\[
L_{ij}^{4} \ge n^{2}+\frac{\|D_{ij}\|_{\HS}^{2}}{4n}.
\]

\end{enumerate}
\end{lemma}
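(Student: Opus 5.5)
Both parts come from the same comparison: the largest eigenvalue of a positive semidefinite $n\times n$ matrix, squared, is at least the mean of the squared eigenvalues. The same tool drives \Cref{thm:defect-lower}.

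\emph{Part (a).} Put $T_i=M_iM_i^T=nI_n+C_i$, which is positive semidefinite. Then
\[
\|M_i\|_{\op}^4=\lambda_{\max}(T_i)^2\ge\frac1n\tr(T_i^2)
=n^2+2\tr C_i+\frac1n\|C_i\|_{\HS}^2 .
\]
Since $\tr C_i=\|M_i\|_{\HS}^2-n^2=0$, this is exactly (a). It is the case $r=1$, $m=n$ of the previous computation, without the spherical averaging.

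\emph{Part (b).} For unit vectors $x=(\cos\theta)e_i+(\sin\theta)e_j$ we have $C_i=C_j=0$, so
\[
M(x)M(x)^T=nI_n+\cos\theta\sin\theta\,D_{ij}.
\]
Set $t=\cos\theta\sin\theta=\tfrac12\sin 2\theta$. Since $L_{ij}=\sup_\theta\|M(x)\|_{\op}$, the same eigenvalue bound gives
\[
L_{ij}^4\ge\frac1n\tr\bigl((nI_n+tD_{ij})^2\bigr)=n^2+2t\tr D_{ij}+\frac{t^2}{n}\|D_{ij}\|_{\HS}^2 .
\]
The linear term vanishes: $\tr D_{ij}=2\tr(M_iM_j^T)$, and the choices $t=\pm\tfrac12$ (that is, $\theta=\pm\pi/4$) are both available. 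Averaging these two choices, or simply choosing the sign with $t\tr D_{ij}\ge0$, gives
\[
L_{ij}^4\ge n^2+\frac{\|D_{ij}\|_{\HS}^2}{4n}.
\]
No spherical-moment computation is needed here. Taking $\theta=\pi/4$ gives the maximal coefficient $t^2=1/4$, which matches the stated constant.

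There is no real obstacle. The only point needing care is the sign of the cross term $2t\tr D_{ij}$, and it is removed by choosing the sign of $t$. One should also note that $M(x)M(x)^T$ is positive semidefinite, so that $\lambda_{\max}^2$ dominates the mean of the squared eigenvalues.
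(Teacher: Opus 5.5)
Your proof is correct and is essentially the paper's: part (a) is the same eigenvalue-averaging computation. In part (b) the paper also evaluates at $x_\pm=(e_i\pm e_j)/\sqrt2$ and averages the two bounds, so that the cross term $\pm\tr D_{ij}$ cancels. One wording point: $\tr D_{ij}=2\tr(M_iM_j^T)$ need not be zero on its own (take $M_j=M_i$), so the linear term vanishes only after the averaging over $t=\pm\frac12$ that you then perform, not before it.
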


\begin{proof}
For (a), put
\[
T=M_{i}M_{i}^{T}=nI_{n}+C_{i}.
\]
If $\lambda_{1},\ldots,\lambda_{n}$ are the eigenvalues of the positive
semidefinite matrix $T$, then
\[
\|M_{i}\|_{\op}^{4} =\bigl(\max_{\nu}\lambda_{\nu}\bigr)^{2} \ge
\frac1n\sum_{\nu=1}^{n}\lambda_{\nu}^{2} =\frac1n\tr(T^{2}).
\]
Since
\[
\tr C_{i}=\|M_{i}\|_{\HS}^{2}-n^{2}=0,
\]
we obtain
\[
\frac1n\tr(T^{2}) =n^{2}+\frac{\|C_{i}\|_{\HS}^{2}}{n}.
\]

For (b), consider the two unit vectors
\[
x_{+}=\frac{e_{i}+e_{j}}{\sqrt2}, \qquad x_{-}=\frac{e_{i}-e_{j}}{\sqrt2}.
\]
The assumption $C_{i}=C_{j}=0$ means $M_{i}M_{i}^{T}=M_{j}M_{j}^{T}=nI_{n}$.
Therefore
\[
M(x_{+})M(x_{+})^{T}=nI_{n}+\frac12D_{ij}, \qquad M(x_{-})M(x_{-})^{T}
=nI_{n}-\frac12D_{ij}.
\]
Applying the same eigenvalue estimate to each of these two matrices gives
\[
L_{ij}^{4}\ge\frac1n\tr\left(  nI_{n}\pm\frac12D_{ij}\right)
^{2}.
\]
Since $L_{ij}^{4}$ is at least both quantities, it is at least their average.
Thus
\begin{align*}
L_{ij}^{4}  &  \ge\frac1{2n}\left[  \tr\left(  nI_{n}
+\frac12D_{ij}\right)  ^{2} +\tr\left(  nI_{n}-\frac
12D_{ij}\right)  ^{2} \right] \\
&  =\frac1{2n}\tr\left(  2n^{2}I_{n}+\frac12D_{ij}^{2}\right) \\
&  =n^{2}+\frac{\|D_{ij}\|_{\HS}^{2}}{4n},
\end{align*}
where the last equality uses the symmetry of $D_{ij}$.
\end{proof}

\begin{theorem}[Discrete spectral gap]
\label{thm:square-gap} Let
\[
A:\ell_{2}^{r}\times\ell_{2}^{n}\times\ell_{2}^{n}\longrightarrow\R
\]
be a trilinear form with coefficients in $\{-1,1\}$. Then either
\[
\|A\|=\sqrt n
\]
or
\[
\|A\|\ge\left(  n^{2}+\frac{2}{n}\right)  ^{1/4}.
\]
Consequently, either
\[
\Lambda(r,n,n)=\sqrt n
\]
or
\[
\Lambda(r,n,n)\ge\left(  n^{2}+\frac{2}{n}\right)  ^{1/4}.
\]

\end{theorem}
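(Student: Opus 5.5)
Suppose $\|A\|>\sqrt n$. By \Cref{thm:defect-lower} (with $m=n$) the relations \eqref{eq:exact-matrix-relations} then fail, so either some $C_i\neq0$, or every $C_i=0$ and some $D_{ij}\neq0$. I treat the two cases separately, using \Cref{lem:square-gap-estimates} in each. The task is to show that the relevant Hilbert--Schmidt norm is large, not merely positive. Since $\|A\|\ge\|M_i\|_{\op}$ and $\|A\|\ge L_{ij}$ (restricting the supremum in \eqref{eq:trilinear-norm-representation}), the theorem reduces to two claims: $C_i\ne0$ implies $\|C_i\|_{\HS}^2\ge2$, and $C_i=C_j=0$, $D_{ij}\neq0$ implies $\|D_{ij}\|_{\HS}^2\ge8$.

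\emph{Case 1: some $C_i\ne0$.} The matrix $C_i=M_iM_i^T-nI_n$ is integral and symmetric, with zero diagonal because each row of $M_i$ has $n$ entries $\pm1$. Its off-diagonal entry $(j,k)$ is the inner product of two $\pm1$ rows of length $n$, so it is congruent to $n\pmod 2$. If it is nonzero, then its symmetric partner is also nonzero, which gives $\|C_i\|_{\HS}^2\ge2$. Part (a) of the lemma then yields $\|A\|^4\ge n^2+2/n$. One can do better with parity: every entry differs from $n$ by an even number, and for $n\ge3$ the $\pm$ structure gives entries $\equiv n\pmod 4$. However, the stated bound needs only $2$, so no refinement is needed here.

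\emph{Case 2: all $C_i=0$ and some $D_{ij}\ne0$.} Now $M_i$ and $M_j$ are Hadamard matrices. If $n>2$, the Hadamard condition forces $4\mid n$; the cases $n=1,2$ can be handled by hand. The entries of $D_{ij}$ are $\langle m_a,m'_b\rangle+\langle m'_a,m_b\rangle$, where $m_a$ and $m'_a$ are the rows of $M_i$ and $M_j$. Each inner product of $\pm1$ vectors of length $n$ is $\equiv n\pmod2$, so each entry of $D_{ij}$ is even. Since $D_{ij}$ is symmetric, a nonzero off-diagonal entry contributes at least $2\cdot 2^2=8$, and a nonzero diagonal entry $2\langle m_a,m'_a\rangle$ contributes at least $4$, or more under parity refinements. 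If there is a nonzero diagonal entry, more structure is needed: $\tr D_{ij}=2\tr(M_iM_j^T)$, and the nonzero diagonal entries cannot all be isolated, because $M_iM_j^T$ is $n$ times an orthogonal matrix. I expect to use $\|D_{ij}\|_{\HS}^2=\tr(D_{ij}^2)=4n^2\tr\bigl(I+\tfrac12(Q^2+Q^{T2})\bigr)$ with $Q=U_i^TU_j$ orthogonal. Even without this, $\|D_{ij}\|_{\HS}^2\ge4$, and part (b) gives $L_{ij}^4\ge n^2+1/n$. Reaching the required $8/(4n)=2/n$ needs $\|D_{ij}\|_{\HS}^2\ge8$. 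This is the main obstacle: excluding a $D_{ij}$ whose only nonzero entry is a single diagonal entry $\pm2$. For this I use the mod-$4$ structure of Hadamard rows. When $4\mid n$, for $\pm1$ vectors $u,v,w$ one has $\langle u,v\rangle\equiv n-2d(u,v)\pmod4$, and the Hadamard orthogonality relations force $\langle m_a,m'_b\rangle\equiv\langle m_a,m'_{b'}\rangle\pmod 4$-type congruences. From these, every entry of $M_iM_j^T$ is $\equiv$ constant $\pmod 2$ in a way that makes the diagonal entries of $D_{ij}$ divisible by $4$. A single nonzero diagonal entry then already gives at least $16$.

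Combining the two cases, $\|A\|^4\ge\min(n^2+2/n,\,n^2+8/(4n))=n^2+2/n$. The statement about $\Lambda(r,n,n)$ follows by applying this to a minimizing form.
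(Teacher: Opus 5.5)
Your outline matches the paper's proof: the same case split, the same use of \Cref{lem:square-gap-estimates}(a),(b), and the same integrality thresholds $\|C_i\|_{\HS}^2\ge2$ and $\|D_{ij}\|_{\HS}^2\ge8$. Case~1 is exactly the paper's argument.

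The problem is the step you call the main obstacle in Case~2. Your mod-$4$ discussion of Hadamard rows does not show that the diagonal entries of $D_{ij}$ are divisible by $4$. Congruences such as $\langle m_a,m'_b\rangle\equiv\langle m_a,m'_{b'}\rangle\pmod 4$ relate entries to one another, but they say nothing about the parity of $\langle m_a,m'_a\rangle$. The phrase ``in a way that makes the diagonal entries divisible by $4$'' is an assertion, not a derivation. The missing step takes one line and uses only facts you already wrote down:
\begin{itemize}
\item every inner product of two $\pm1$ vectors of length $n$ is $\equiv n\pmod 2$;
\item $n$ is even as soon as a Hadamard matrix of order $n\ge2$ exists. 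Evenness is all you need; $4\mid n$ is not.
\end{itemize}
Hence $\langle m_a,m'_a\rangle$ is even, and every diagonal entry $2\langle m_a,m'_a\rangle$ of $D_{ij}$ is divisible by $4$. A nonzero one contributes at least $16$. Together with your off-diagonal bound of $8$, this gives $\|D_{ij}\|_{\HS}^2\ge8$. This is precisely the paper's argument, and it covers $n=2$ with no special treatment.

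The case $n=1$ genuinely needs separate handling, and you should carry it out rather than defer it. There every $C_i=0$ automatically, and $D_{ij}=(\pm2)$ is exactly the single-diagonal-entry configuration you were worried about. So \Cref{lem:square-gap-estimates}(b) only gives $L_{ij}^4\ge2$, which is less than the required $3$. The paper computes directly instead: $A(x,y,z)=yz\sum_i\varepsilon_ix_i$, so $\|A\|=\sqrt r$. If $\|A\|>1$ then $r\ge2$, and $\|A\|^4=r^2\ge4>3$.

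Finally, your side remark in Case~1, that off-diagonal entries are $\equiv n\pmod 4$ for $n\ge3$, is false for general sign matrices. For example, the rows $(1,1,1)$ and $(1,1,-1)$ have inner product $1$ while $n=3$. You never use it, so simply drop it.
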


\begin{proof}
For $n=1$, the form has the shape
\[
A(x,y,z)=yz\sum_{i=1}^{r}\varepsilon_{i} x_{i}, \qquad\varepsilon_{i}
\in\{-1,1\},
\]
so $\|A\|=\sqrt r$. If this norm is larger than $1$, then $r\ge2$ and
\[
\|A\|^{4}=r^{2}\ge4>3=1^{2}+\frac2{1}.
\]
Assume henceforth that $n\ge2$ and $\|A\|>\sqrt n$. By the equality
characterization \eqref{eq:exact-matrix-relations}, at least one of those matrix relations fails.

Suppose first that $C_{i}\ne0$ for some $i$. The diagonal entries of $C_{i}$
are zero, because every row of $M_{i}$ has squared Euclidean norm $n$. Since
$C_{i}$ is a nonzero symmetric integer matrix, it has a nonzero off-diagonal
entry together with its symmetric counterpart. Hence
\[
\|C_{i}\|_{\HS}^{2}\ge2.
\]
\Cref{lem:square-gap-estimates}(a) gives
\[
\|A\|^{4}\ge\|M_{i}\|_{\op}^{4} \ge n^{2}+\frac2n.
\]

Consider now the case $C_{i}=0$ for every $i$. Since equality does
not hold, there are distinct indices $i,j$ with $D_{ij}\ne0$. Each $M_{i}$ is
then a Hadamard matrix. Because $n\ge2$, the orthogonality of two of its rows
implies that $n$ is even.

Every off-diagonal entry of $D_{ij}$ is a sum of two inner products of sign
vectors of length $n$. Each of these inner products is even, so every
off-diagonal entry of $D_{ij}$ is even. A diagonal entry has the form
\[
2\langle u,v\rangle,
\]
where $u,v\in\{-1,1\}^{n}$; it is therefore divisible by $4$. Since $D_{ij}$
is symmetric and nonzero, either it has a nonzero diagonal entry, contributing
at least $16$ to $\|D_{ij}\|_{\HS}^{2}$, or it has a nonzero
off-diagonal entry together with its symmetric counterpart, contributing at
least $8$. Thus
\[
\|D_{ij}\|_{\HS}^{2}\ge8.
\]
By \Cref{lem:square-gap-estimates}(b),
\[
\|A\|^{4}\ge L_{ij}^{4} \ge n^{2}+\frac{8}{4n} =n^{2}+\frac2n.
\]

\end{proof}

\begin{corollary}
 If
\[
\Lambda(r,n,n)<\left(  n^{2}+\frac2n\right)  ^{1/4},
\]
then
\[
\Lambda(r,n,n)=\sqrt n.
\]
Consequently, a Hadamard matrix of order $n$ exists and $r\le\rho(n)$. More
generally, if $\varepsilon\ge0$,
\[
\Lambda(r,n,n)\le(1+\varepsilon)\sqrt n
\]
and
\[
(1+\varepsilon)^{4}-1<\frac{2}{n^{3}},
\]
then the same conclusion holds.
\end{corollary}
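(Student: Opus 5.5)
The corollary follows from \Cref{thm:square-gap} combined with \Cref{thm:exact-square}; no new estimate is needed.

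For the first assertion, assume $\Lambda(r,n,n)<(n^2+2/n)^{1/4}$. Because there are only finitely many sign tensors, the minimum $\Lambda(r,n,n)$ is attained by some sign form $A$ on $\ell_2^r\times\ell_2^n\times\ell_2^n$. Then $\|A\|<(n^2+2/n)^{1/4}$. \Cref{thm:square-gap} leaves only two possibilities for $\|A\|$, and this rules out the second, so $\|A\|=\sqrt n$. Hence $\Lambda(r,n,n)=\sqrt n$. The equivalence (i)$\Leftrightarrow$(iii) of \Cref{thm:exact-square} then gives a Hadamard matrix of order $n$ together with $r\le\rho(n)$.

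For the second assertion, I will reduce to the first. Suppose $\Lambda(r,n,n)\le(1+\varepsilon)\sqrt n$ and $(1+\varepsilon)^4-1<2/n^3$. Raising the first inequality to the fourth power gives
\[
\Lambda(r,n,n)^4\le(1+\varepsilon)^4n^2<\Bigl(1+\frac{2}{n^3}\Bigr)n^2=n^2+\frac2n.
\]
The step $(1+\varepsilon)^4n^2<(1+2/n^3)n^2$ uses the hypothesis on $\varepsilon$, which is strict, so the resulting inequality is strict. Taking fourth roots shows $\Lambda(r,n,n)<(n^2+2/n)^{1/4}$, and the first part applies.

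The only point needing care is the attainment of the minimum, which guarantees that \Cref{thm:square-gap} applies to an actual form with norm exactly $\Lambda(r,n,n)$. Alternatively, \Cref{thm:square-gap} already states its conclusion directly for $\Lambda(r,n,n)$, which avoids this point entirely. I do not expect any real obstacle.
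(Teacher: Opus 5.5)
Your proposal is correct and follows the paper's proof essentially verbatim. The first assertion is read off from the dichotomy in \Cref{thm:square-gap} (which, as you note, is already stated directly for $\Lambda(r,n,n)$) together with \Cref{thm:exact-square}, and the second reduces to the first via $(1+\varepsilon)^4n^2<n^2+2/n$.
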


\begin{proof}
The first assertion follows directly from \Cref{thm:square-gap}, and the
classification gives the final conditions. For the second assertion,
\[
(1+\varepsilon)^{4}n^{2}<n^{2}+\frac2n,
\]
so the upper bound lies strictly below the second alternative in the theorem.
\end{proof}

\begin{remark}
After the fourth powers are divided by $n^{2}$, the gap is $2/n^{3}$. In
particular, if $\varepsilon_{n}=o(n^{-3})$ and
\[
\Lambda(r_{n},n,n)\le(1+\varepsilon_{n})\sqrt n,
\]
then $\Lambda(r_{n},n,n)=\sqrt n$ for all sufficiently large $n$. The theorem does not cover the case in which the relative error tends to
zero at a slower rate.
\end{remark}

\begin{corollary}
For $n\in\N$,
\[
\Lambda(n,n,n)=\sqrt n \quad\Longleftrightarrow\quad n\in\{1,2,4,8\}.
\]
For every other $n$,
\[
\Lambda(n,n,n)\ge\left(  n^{2}+\frac{2}{n}\right)  ^{1/4}>\sqrt n.
\]

\end{corollary}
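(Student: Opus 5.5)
By \Cref{thm:exact-square}, $\Lambda(n,n,n)=\sqrt n$ holds exactly when a Hadamard matrix of order $n$ exists and $n\le\rho(n)$. For every other $n$, \Cref{thm:square-gap} then gives $\Lambda(n,n,n)\ge(n^2+2/n)^{1/4}>\sqrt n$. So the whole proof reduces to showing that these two conditions hold simultaneously exactly for $n\in\{1,2,4,8\}$.

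\emph{The four admissible values.} Each of $n=1,2,4,8$ is a power of $2$, so a Hadamard matrix of order $n$ exists (Sylvester, or iterated Kronecker products of $\begin{pmatrix}1&1\\1&-1\end{pmatrix}$; for $n=1$ take $H=(1)$). Their $2$-adic valuations are $0,1,2,3$, so $a=0$ and $b=\nu_2(n)$ in every case. Hence $\rho(n)=2^{b}=n$, and the condition $n\le\rho(n)$ holds with equality.

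\emph{No other value works.} We show that $\rho(n)<n$ for every other $n$, so condition (iii) of \Cref{thm:exact-square} fails regardless of whether a Hadamard matrix exists. Write $n=2^{4a+b}u$ with $u$ odd.
\begin{itemize}
\item If $u\ge3$, then $n\ge 3\cdot2^{4a+b}>2^{4a+b}\ge\rho(n)$. The last inequality is checked below.
\item If $u=1$, then $n=2^{4a+b}$ and, since $n\notin\{1,2,4,8\}$, we have $a\ge1$. We need $8a+2^b<2^{4a+b}$. Because $2^b\ge1$, it suffices to show $8a+1\le 2^{4a}$, i.e.\ $8a<16^a$, which holds for $a\ge1$: at $a=1$ it reads $8<16$, and the right side grows faster. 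Indeed, $8a+2^b\le 2^b(8a+1)\le 2^b\cdot16^a$, with strict inequality at the first step when $b\ge1$ and at the second when $a\ge1$.
\end{itemize}
The same estimate $8a+2^b\le 2^{4a+b}$ holds for all $a\ge0$ (equality only when $a=0$), which supplies the bound $2^{4a+b}\ge\rho(n)$ used in the case $u\ge3$.

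The only real work is this elementary inequality $\rho(n)\le 2^{\nu_2(n)}\le n$, with equality exactly at $n\in\{1,2,4,8\}$. Everything else is a direct citation of \Cref{thm:exact-square} and \Cref{thm:square-gap}.
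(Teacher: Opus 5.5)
Your proof is correct and follows the paper's argument: you reduce via \Cref{thm:exact-square} to the condition $n\le\rho(n)$, show $\rho(n)\le n$ with equality exactly for $n\in\{1,2,4,8\}$, note that Hadamard matrices exist in those orders, and invoke \Cref{thm:square-gap}. The only difference is the case split: you separate $u=1$ from $u\ge3$ using $\rho(n)\le 2^{\nu_2(n)}$, while the paper separates $a=0$ from $a\ge1$. One small slip is that $8a+2^b\le 2^b(8a+1)$ is strict only when both $a\ge1$ and $b\ge1$, not merely $b\ge1$. This is harmless, because the strictness you need comes from $8a+1<16^a$ for $a\ge1$.
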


\begin{proof}
By \Cref{thm:exact-square}, equality is equivalent to the existence of a
Hadamard matrix of order $n$ together with $n\le\rho(n)$. We verify explicitly
when the latter inequality can occur. Write
\[
n=2^{4a+b}u, \qquad u\text{ odd}, \qquad0\le b\le3.
\]
If $a=0$, then
\[
\rho(n)=2^{b}\le2^{b}u=n,
\]
and equality holds exactly when $u=1$. This gives $n\in\{1,2,4,8\}$. If
$a\ge1$, then the inequality is strict. Indeed, for $a=1$ and $b=0$ one has
$\rho(n)=9<16\le n$; for $a=1$ and $b\ge1$,
\[
\rho(n)=8+2^{b}\le16<16\cdot2^{b}\le n;
\]
and for $a\ge2$,
\[
\rho(n)=8a+2^{b}\le8a+8<16^{a}\le2^{4a+b}\le n.
\]
Therefore $\rho(n)\le n$ for every $n$, with equality precisely for
$n\in\{1,2,4,8\}$. Hadamard matrices exist in these four orders. The strict
estimate follows from \Cref{thm:square-gap}.
\end{proof}

\section{Proof of Main Theorem A(iii): a lower bound for many slices}\label{sec:quantitative-obstruction}

We prove \hyperref[mainA]{Main Theorem~A}(iii) in four steps. First, the fourth-moment estimate
gives bounds for the matrix relations when the norm is close to $\sqrt n$.
A smaller family of slices is then selected, and the normalized slices are
replaced by nearby orthogonal matrices. Hermitian dilation produces involutions
that almost anticommute on average, while a finite-group Fourier argument gives
a lower bound on the dimension. Combining these estimates yields the stated
lower bound for $K_{r,n}$.

\subsection{Hilbert--Schmidt bounds for the matrix relations}

The fourth-moment estimate gives the following bounds when
$\|A\|\le(1+\varepsilon)\sqrt n$.

\begin{theorem}[Bounds from a near-minimal norm]\label{thm:analytic-defect}
Let $A$ be unimodular on $\ell_2^r\times\ell_2^m\times\ell_2^n$, where $m\le n$. If
\[
 \|A\|\le (1+\varepsilon)\sqrt n,
\]
then
\begin{equation}\label{eq:defect-upper}
 \Defect(M_1,\dots,M_r)
 \le mr(r+2)n^2\big((1+\varepsilon)^4-1\big).
\end{equation}
Consequently,
\begin{equation}\label{eq:slice-orth-stab}
 \|M_iM_i^T-nI_m\|_{\HS}
 \le n\sqrt{\frac{mr(r+2)}{2}}
 \sqrt{(1+\varepsilon)^4-1}
\end{equation}
and, for $i\ne j$,
\begin{equation}\label{eq:anti-stab}
 \|M_iM_j^T+M_jM_i^T\|_{\HS}
 \le n\sqrt{mr(r+2)}
 \sqrt{(1+\varepsilon)^4-1}.
\end{equation}
\end{theorem}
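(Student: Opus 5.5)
The plan is to read this off directly from the fourth-moment estimate, \Cref{thm:defect-lower}. Since $\|A\|\le(1+\varepsilon)\sqrt n$, we have $\|A\|^4\le(1+\varepsilon)^4n^2$. Inserting this into \eqref{eq:fourth-moment-defect} gives
\[
(1+\varepsilon)^4n^2\ \ge\ n^2+\frac{\Defect(M_1,\dots,M_r)}{mr(r+2)}.
\]
Rearranging yields \eqref{eq:defect-upper} at once. No new analytic input is needed: all the work sits in the spherical averaging already done in the proof of \Cref{thm:defect-lower}.

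Next we use that each term in the definition \eqref{eq:defect} of $\Defect$ is nonnegative, so $\Defect$ dominates any single term.

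For the slice relation, fix $i$. Then $\Defect\ge 2\|C_i\|_{\HS}^2$, because the terms $\|\sum_k C_k\|_{\HS}^2$, $\|D_{jk}\|_{\HS}^2$ and $\|C_k\|_{\HS}^2$ for $k\ne i$ are all $\ge0$. Combined with \eqref{eq:defect-upper}, this gives
\[
\|C_i\|_{\HS}^2\le \frac{mr(r+2)n^2\big((1+\varepsilon)^4-1\big)}{2}.
\]
Taking square roots gives \eqref{eq:slice-orth-stab}, since $C_i=M_iM_i^T-nI_m$.

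For $i\ne j$, order the pair so that $i<j$; this is harmless because $D_{ij}$ is symmetric in the two indices. Then $\Defect\ge\|D_{ij}\|_{\HS}^2$, and the same manipulation yields \eqref{eq:anti-stab} with no factor $2$.

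There is no real obstacle here. The only points needing care are that $\varepsilon\ge0$ is implicitly allowed, so $(1+\varepsilon)^4-1\ge0$ and the square roots make sense (if $\varepsilon<0$ the hypothesis contradicts the universal lower bound, so the statement is vacuous), and that the factor $2$ in front of $\sum_i\|C_i\|_{\HS}^2$ in \eqref{eq:defect} accounts for the $\sqrt{1/2}$ in \eqref{eq:slice-orth-stab}.
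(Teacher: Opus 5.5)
Your proposal is correct and follows the paper's proof essentially verbatim: insert $\|A\|^4\le(1+\varepsilon)^4n^2$ into the fourth-moment estimate of \Cref{thm:defect-lower}, rearrange to get \eqref{eq:defect-upper}, and then use that each term of $\Defect$ is nonnegative, which gives $2\|C_i\|_{\HS}^2\le\Defect$ and $\|D_{ij}\|_{\HS}^2\le\Defect$, before taking square roots. Your additional remarks, that the statement is vacuous for $\varepsilon<0$ and that $D_{ij}$ is symmetric in $i,j$, are accurate, harmless refinements.
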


\begin{proof}
The hypothesis gives
\[
 \|A\|^4\le(1+\varepsilon)^4n^2.
\]
On the other hand, \Cref{thm:defect-lower} gives
\[
 \|A\|^4\ge n^2+\frac{\Defect(M_1,\ldots,M_r)}{mr(r+2)}.
\]
Subtracting $n^2$ and multiplying by $mr(r+2)$ yields
\[
 \Defect(M_1,\ldots,M_r)\le mr(r+2)n^2\bigl((1+\varepsilon)^4-1\bigr),
\]
which is \eqref{eq:defect-upper}. Since every term in
\eqref{eq:defect} is nonnegative,
\[
 2\|C_i\|_{\HS}^2\le\Defect(M_1,\ldots,M_r),
 \qquad
 \|D_{ij}\|_{\HS}^2\le\Defect(M_1,\ldots,M_r).
\]
Taking square roots and inserting \eqref{eq:defect-upper} gives
\eqref{eq:slice-orth-stab} and \eqref{eq:anti-stab}.
\end{proof}

In the square case we can get a better bound for each individual slice by
using its operator norm.

\begin{lemma}[A bound for each slice]\label{lem:spectral-slice}
Let $A$ have square slices and satisfy
$\|A\|\le(1+\varepsilon)\sqrt n$. Then, for every $i$,
\begin{equation}\label{eq:spectral-slice}
 \frac{\|M_iM_i^T-nI_n\|_{\HS}^2}{n^3}
 \le (1+\varepsilon)^2-1.
\end{equation}
\end{lemma}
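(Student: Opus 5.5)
We want $\|C_i\|_{\HS}^2\le n^3((1+\varepsilon)^2-1)$, where $C_i=M_iM_i^T-nI_n$. The plan is to control $\|C_i\|_{\HS}^2$ by the spectrum of $T=M_iM_i^T$, using an operator-norm bound on top of the trace identity.

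First, take $x=e_i$ in \eqref{eq:trilinear-norm-representation}. This gives $\|M_i\|_{\op}\le\|A\|\le(1+\varepsilon)\sqrt n$. Hence every eigenvalue $\lambda_\nu$ of the positive semidefinite matrix $T$ satisfies
\[
0\le\lambda_\nu\le(1+\varepsilon)^2n.
\]

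Second, as in the proof of \Cref{lem:square-gap-estimates}(a), use
\[
\tr C_i=\|M_i\|_{\HS}^2-n^2=0,
\]
so that $\sum_\nu\lambda_\nu=n^2$. Then
\[
\|C_i\|_{\HS}^2=\sum_\nu(\lambda_\nu-n)^2=\sum_\nu\lambda_\nu^2-n^3.
\]

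Third, bound $\sum_\nu\lambda_\nu^2\le\lambda_{\max}\sum_\nu\lambda_\nu\le(1+\varepsilon)^2n\cdot n^2$. This uses $\lambda_\nu\ge0$ together with the eigenvalue bound from the first step. Subtracting $n^3$ gives
\[
\|C_i\|_{\HS}^2\le n^3\bigl((1+\varepsilon)^2-1\bigr),
\]
which is \eqref{eq:spectral-slice} after dividing by $n^3$.

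No step is genuinely hard. The only point needing care is the third step: it uses nonnegativity of the eigenvalues, i.e. positive semidefiniteness of $M_iM_i^T$, in the inequality $\lambda_\nu^2\le\lambda_{\max}\lambda_\nu$. The square hypothesis enters only through the trace normalization $\sum_\nu\lambda_\nu=n^2$ with $n$ eigenvalues.
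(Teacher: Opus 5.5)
Your proof is correct and is essentially the paper's argument: the paper rescales to $B_i=M_i/\sqrt n$, whose Gram matrix $B_iB_i^T$ has eigenvalues in $[0,(1+\varepsilon)^2]$ summing to $n$, and uses $\lambda_\nu^2\le(1+\varepsilon)^2\lambda_\nu$. This is your bound $\sum_\nu\lambda_\nu^2\le\lambda_{\max}\sum_\nu\lambda_\nu$ after dividing by $n$, and your identity $\|C_i\|_{\HS}^2=\sum_\nu\lambda_\nu^2-n^3$ is the unnormalized form of the paper's $\frac1n\|B_iB_i^T-I_n\|_{\HS}^2=\frac1n\sum_\nu\lambda_\nu^2-1$.
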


\begin{proof}
Set $B_i=M_i/\sqrt n$. Since $M_i$ has $n^2$ entries of modulus one,
\[
 \|B_i\|_{\HS}^2=\frac1n\|M_i\|_{\HS}^2=n.
\]
Let $\lambda_1,\ldots,\lambda_n\ge0$ be the eigenvalues of
$B_iB_i^T$. Then
\begin{equation}
 \sum_{\nu=1}^n\lambda_\nu
 =\tr(B_iB_i^T)=\|B_i\|_{\HS}^2=n.
 \label{eq:slice-eigenvalue-sum}
\end{equation}
Moreover, choosing $x=e_i$ in the slice formula \eqref{eq:trilinear-norm-representation} gives
\[
 \|B_i\|_{\op}=\frac{\|M_i\|_{\op}}{\sqrt n}
 \le\frac{\|A\|}{\sqrt n}\le1+\varepsilon.
\]
Hence every eigenvalue satisfies
\[
 0\le\lambda_\nu\le\|B_iB_i^T\|_{\op}
 =\|B_i\|_{\op}^2\le(1+\varepsilon)^2.
\]
Therefore
$\lambda_\nu^2\le(1+\varepsilon)^2\lambda_\nu$. Summing and using
\eqref{eq:slice-eigenvalue-sum},
\[
 \sum_{\nu=1}^n\lambda_\nu^2
 \le(1+\varepsilon)^2n.
\]
Finally,
\begin{align*}
 \frac1n\|B_iB_i^T-I_n\|_{\HS}^2
 &=\frac1n\sum_{\nu=1}^n(\lambda_\nu-1)^2\\
 &=\frac1n\sum_{\nu=1}^n\lambda_\nu^2
   -\frac2n\sum_{\nu=1}^n\lambda_\nu+1\\
 &=\frac1n\sum_{\nu=1}^n\lambda_\nu^2-1\\
 &\le(1+\varepsilon)^2-1.
\end{align*}
Since
\[
 \frac1n\|B_iB_i^T-I_n\|_{\HS}^2
 =\frac{\|M_iM_i^T-nI_n\|_{\HS}^2}{n^3},
\]
this is \eqref{eq:spectral-slice}.
\end{proof}

\begin{remark}
For small $\varepsilon$,
\[
 (1+\varepsilon)^4-1=4\varepsilon+O(\varepsilon^2).
\]
Thus, for fixed $r$, the normalized errors in
\eqref{eq:slice-orth-stab} and \eqref{eq:anti-stab} are of order
$\sqrt\varepsilon$.
\end{remark}

\subsection{Selecting slices and using polar decomposition}

For square slices put
\[
 \Theta(\varepsilon)=(1+\varepsilon)^4-1
\]
and define
\[
 a_i=\frac{\|M_iM_i^T-nI_n\|_{\HS}^2}{n^3},
 \qquad
 b_{ij}=\frac{\|M_iM_j^T+M_jM_i^T\|_{\HS}^2}{n^3}.
\]
\Cref{thm:analytic-defect} gives
\begin{equation}\label{eq:total-pair-defect}
 \sum_{1\le i<j\le r}b_{ij}
 \le r(r+2)\Theta(\varepsilon).
\end{equation}
The diagonal terms admit the sharper pointwise bound
\begin{equation}\label{eq:diagonal-defect-bound}
 a_i\le(1+\varepsilon)^2-1
 \qquad(1\le i\le r)
\end{equation}
by \Cref{lem:spectral-slice}.

\begin{proposition}[Selecting a family of slices]\label{thm:average-extraction}
Let $A$ be a square-format sign tensor satisfying
$\|A\|\le(1+\varepsilon)\sqrt n$, and let $2\le q\le r$. Then there is a
set $S\subset\{1,\ldots,r\}$ with $|S|=q$ such that
\begin{equation}\label{eq:average-extraction}
 \frac1{\binom q2}
 \sum_{\substack{i<j\\i,j\in S}} b_{ij}
 \le \frac{2(r+2)}{r-1}\Theta(\varepsilon).
\end{equation}
Every selected slice also satisfies \eqref{eq:diagonal-defect-bound}.
\end{proposition}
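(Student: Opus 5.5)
The plan is a plain averaging argument over all $q$-element subsets of $\{1,\ldots,r\}$, combined with the global bound \eqref{eq:total-pair-defect}. For a subset $S$ with $|S|=q$, put
\[
 F(S)=\frac1{\binom q2}\sum_{\substack{i<j\\ i,j\in S}}b_{ij}.
\]
We compute the average of $F(S)$ over all $\binom rq$ such subsets and then pick an $S$ at which $F(S)$ does not exceed that average. Since $\min\le$ mean, this $S$ is the one we want.

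To compute the average, note that a fixed pair $\{i,j\}$ with $i<j$ lies in exactly $\binom{r-2}{q-2}$ of the $q$-subsets. Hence
\[
 \frac1{\binom rq}\sum_{|S|=q}F(S)
 =\frac{\binom{r-2}{q-2}}{\binom rq\binom q2}\sum_{1\le i<j\le r}b_{ij}.
\]
The key step is the identity
\[
 \binom rq\binom q2=\binom r2\binom{r-2}{q-2}.
\]
Both sides count a $q$-subset together with a marked pair inside it. With this identity the coefficient becomes $1/\binom r2=2/(r(r-1))$. Inserting \eqref{eq:total-pair-defect} then gives
\[
 \frac{2}{r(r-1)}\,r(r+2)\Theta(\varepsilon)=\frac{2(r+2)}{r-1}\Theta(\varepsilon),
\]
which is exactly the bound \eqref{eq:average-extraction}. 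This is well defined because $q\ge2$, so $\binom q2\ge1$, and $r\ge q\ge2$, so $r-1\ge1$.

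The final clause needs no selection at all. \Cref{lem:spectral-slice} gives \eqref{eq:diagonal-defect-bound} for every index $i$, and in particular for every $i\in S$.

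There is no real obstacle here. The only point needing care is the binomial double-counting identity, which reduces the averaging constant to $1/\binom r2$.
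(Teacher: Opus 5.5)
Your proposal is correct and matches the paper's proof. The paper also averages uniformly over $q$-subsets, using that a fixed pair lies in $\binom{r-2}{q-2}$ of them so the mean collapses to $\binom r2^{-1}\sum_{i<j}b_{ij}$; it then applies \eqref{eq:total-pair-defect}, picks a subset at or below the mean, and gets the diagonal clause from \Cref{lem:spectral-slice} holding for every slice before selection.
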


\begin{proof}
Let $\mathcal S_q$ be the family of all $q$-element subsets of
$\{1,\ldots,r\}$, and choose $S$ uniformly from $\mathcal S_q$. For a fixed
pair $\{i,j\}$, the number of sets in $\mathcal S_q$ containing that pair is
$\binom{r-2}{q-2}$. Thus
\[
 \mathbb P(\{i,j\}\subset S)
 =\frac{\binom{r-2}{q-2}}{\binom rq}
 =\frac{q(q-1)}{r(r-1)}
 =\frac{\binom q2}{\binom r2}.
\]
By linearity of expectation,
\begin{align*}
 &\mathbb E_S\left[
 \frac1{\binom q2}\sum_{\substack{i<j\\i,j\in S}}b_{ij}
 \right]\\
 &\quad=\frac1{\binom q2}\sum_{i<j}b_{ij}
 \mathbb P(\{i,j\}\subset S)\\
 &\quad=\frac1{\binom r2}\sum_{i<j}b_{ij}\\
 &\quad\le\frac{r(r+2)}{\binom r2}\Theta(\varepsilon)
 =\frac{2(r+2)}{r-1}\Theta(\varepsilon),
\end{align*}
where \eqref{eq:total-pair-defect} was used in the inequality. Since a finite
average is at least its minimum, at least one $S\in\mathcal S_q$ satisfies
\eqref{eq:average-extraction}. The diagonal estimate
\eqref{eq:diagonal-defect-bound} holds for every slice before selection and hence for all
selected slices.
\end{proof}

Notice that \eqref{eq:average-extraction} has no extra factor of $q$. This is
used in the final estimate.

\subsubsection*{Polar approximation}

For a $d\times d$ matrix, write
\[
\|T\|_{2,d}=d^{-1/2}\|T\|_{\mathrm{HS}}.
\]
We denote by
\[
O(d)=\{Q\in M_d(\mathbb{R}) : Q^TQ=QQ^T=I_d\}
\]
the real orthogonal group of order $d$.

\begin{lemma}[Polar approximation]\label{lem:polar}
Let $B\in M_n(\R)$. Then there exists $Q\in O(n)$ such that
\begin{equation}
 \|B-Q\|_{\HS}\le \|BB^T-I_n\|_{\HS}.
\end{equation}
\end{lemma}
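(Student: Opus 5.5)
We take $Q$ to be the orthogonal factor in a polar decomposition of $B$ and compare singular values. By the singular value decomposition, write
\[
B=U\Sigma V^T,\qquad U,V\in O(n),\qquad \Sigma=\operatorname{diag}(s_1,\ldots,s_n),\quad s_\nu\ge0,
\]
and set $Q=UV^T\in O(n)$. No invertibility of $B$ is needed: a singular value $s_\nu=0$ is simply paired with a column of $U$ and a column of $V$.

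By orthogonal invariance of the Hilbert--Schmidt norm,
\[
\|B-Q\|_{\HS}=\|U(\Sigma-I_n)V^T\|_{\HS}=\|\Sigma-I_n\|_{\HS},
\]
so that
\[
\|B-Q\|_{\HS}^2=\sum_{\nu=1}^n(s_\nu-1)^2 .
\]
In the same way, $BB^T=U\Sigma^2U^T$, and hence
\[
\|BB^T-I_n\|_{\HS}^2=\|\Sigma^2-I_n\|_{\HS}^2=\sum_{\nu=1}^n(s_\nu^2-1)^2 .
\]

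The comparison is now termwise. For each $\nu$,
\[
(s_\nu^2-1)^2=(s_\nu-1)^2(s_\nu+1)^2\ge(s_\nu-1)^2,
\]
because $s_\nu\ge0$ gives $s_\nu+1\ge1$. Summing over $\nu$ and taking square roots yields the claimed inequality.

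No step here is a real obstacle. The only points needing care are that the singular value decomposition gives real orthogonal $U,V$ for real $B$, which it does, and that the singular values are nonnegative. The latter is exactly what makes $(s_\nu+1)^2\ge1$, and hence the constant $1$ in the inequality.
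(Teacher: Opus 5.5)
Your proposal is correct and follows essentially the same route as the paper: take $Q$ to be the orthogonal polar factor, write $\|B-Q\|_{\HS}^2=\sum_\nu(s_\nu-1)^2$, and use $|s_\nu-1|\le|s_\nu^2-1|$ for $s_\nu\ge0$ to compare termwise with $\|BB^T-I_n\|_{\HS}^2$. You also make explicit, via $Q=UV^T$ and $BB^T=U\Sigma^2U^T$, the two singular-value identities that the paper's proof leaves implicit.
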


\begin{proof}
Let $s_1,\dots,s_n$ be the singular values of $B$, and let $Q$ be an orthogonal polar factor. Then
\[
 \|B-Q\|_{\HS}^2=\sum_{\nu=1}^n(s_\nu-1)^2.
\]
Since $s_\nu\ge0$,
\[
 |s_\nu-1|\le |s_\nu^2-1|.
\]
Therefore
\[
 \|B-Q\|_{\HS}^2
 \le\sum_{\nu=1}^n(s_\nu^2-1)^2
 =\|BB^T-I_n\|_{\HS}^2.
\]
\end{proof}

\begin{lemma}[Average anticommutator bound]\label{lem:average-polar}
Assume $0\le\varepsilon\le1$, and let $S$ be supplied by
\Cref{thm:average-extraction}. Put $B_i=M_i/\sqrt n$ for
$i\in S$. There are $Q_i\in O(n)$ such that
\[
 \|B_i-Q_i\|_{2,n}\le\sqrt{(1+\varepsilon)^2-1}
\]
and
\begin{equation}
 \frac1{\binom q2}\sum_{\substack{i<j\\i,j\in S}}
 \|Q_iQ_j^T+Q_jQ_i^T\|_{2,n}^2
 \le 80\Theta(\varepsilon),
\end{equation}
provided $r\ge6$.
\end{lemma}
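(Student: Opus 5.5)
We take $Q_i$ to be the orthogonal polar factor of $B_i$ given by \Cref{lem:polar}. That lemma and \Cref{lem:spectral-slice} give
\[
\|B_i-Q_i\|_{2,n}^2\le \frac1n\|B_iB_i^T-I_n\|_{\HS}^2=a_i\le(1+\varepsilon)^2-1,
\]
which is the first claim. For the anticommutator bound we write $B_i=Q_i+\Delta_i$, with $\|\Delta_i\|_{2,n}^2\le a_i$, and expand
\[
Q_iQ_j^T+Q_jQ_i^T=(B_iB_j^T+B_jB_i^T)-\bigl(\Delta_iB_j^T+Q_i\Delta_j^T+\Delta_jB_i^T+Q_j\Delta_i^T\bigr).
\]
The first term has normalized norm $\|B_iB_j^T+B_jB_i^T\|_{2,n}^2=b_{ij}$, since $\|M_iM_j^T+M_jM_i^T\|_{\HS}^2/n^3$ equals $\frac1n\|B_iB_j^T+B_jB_i^T\|_{\HS}^2$.

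For the error terms we use $\|XY\|_{\HS}\le\|X\|_{\HS}\|Y\|_{\op}$ together with $\|Q\|_{\op}=1$ and $\|B_j\|_{\op}\le1+\varepsilon\le2$. This gives
\[
\|\Delta_iB_j^T+Q_i\Delta_j^T+\Delta_jB_i^T+Q_j\Delta_i^T\|_{2,n}\le 3\bigl(\|\Delta_i\|_{2,n}+\|\Delta_j\|_{2,n}\bigr)\le 6\sqrt{(1+\varepsilon)^2-1}.
\]
Applying $(u+v)^2\le2u^2+2v^2$,
\[
\|Q_iQ_j^T+Q_jQ_i^T\|_{2,n}^2\le 2b_{ij}+72\bigl((1+\varepsilon)^2-1\bigr).
\]
Averaging over pairs in $S$ and using \Cref{thm:average-extraction} bounds the first part by $\frac{4(r+2)}{r-1}\Theta(\varepsilon)$. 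When $r\ge6$ we have $\frac{r+2}{r-1}\le\frac85$, so this part is at most $6.4\,\Theta(\varepsilon)$.

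The remaining step is to compare $(1+\varepsilon)^2-1$ with $\Theta(\varepsilon)=(1+\varepsilon)^4-1$. Since
\[
\Theta(\varepsilon)=\bigl((1+\varepsilon)^2-1\bigr)\bigl((1+\varepsilon)^2+1\bigr)\ge 2\bigl((1+\varepsilon)^2-1\bigr),
\]
we get $72\bigl((1+\varepsilon)^2-1\bigr)\le36\,\Theta(\varepsilon)$. The total is therefore at most $(6.4+36)\Theta(\varepsilon)\le80\,\Theta(\varepsilon)$.

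The main obstacle is keeping the constants under $80$. If the crude operator bound $\|B_j\|_{\op}\le2$ had turned out too loose, I would use $1+\varepsilon$ explicitly, or group the error terms as $\Delta_iQ_j^T+Q_i\Delta_j^T$ plus a quadratic term $\Delta_i\Delta_j^T$. The estimate above already fits with room to spare, so this refinement is not needed. The hypothesis $\varepsilon\le1$ is used only in the bound $\|B_j\|_{\op}\le2$.
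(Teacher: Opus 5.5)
Your proof is correct and follows the paper's argument: polar factors from \Cref{lem:polar} combined with \Cref{lem:spectral-slice}, an insert-and-subtract bound of $3(\delta_i+\delta_j)$ on the anticommutator error, the inequality $(u+v)^2\le 2u^2+2v^2$, and averaging via \Cref{thm:average-extraction}. The differences are minor: you put the perturbation on the other factor ($\Delta_iB_j^T+Q_i\Delta_j^T$ instead of $(Q_i-B_i)Q_j^T+B_i(Q_j-B_j)^T$), and you use $\Theta(\varepsilon)\ge 2\bigl((1+\varepsilon)^2-1\bigr)$ where the paper uses only $(1+\varepsilon)^2-1\le\Theta(\varepsilon)$, which gives you the constant $42.4$ in place of the paper's bound $78.4\le 80$.
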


\begin{proof}
For each selected index $i$, \Cref{lem:polar} gives an orthogonal
matrix $Q_i$ such that
\[
 \|B_i-Q_i\|_{2,n}
 \le\|B_iB_i^T-I_n\|_{2,n}.
\]
The square of the right-hand side is exactly $a_i$, so
\eqref{eq:diagonal-defect-bound} yields
\begin{equation}
 \delta_i:=\|B_i-Q_i\|_{2,n}
 \le\sqrt{(1+\varepsilon)^2-1}
 \le\sqrt{\Theta(\varepsilon)}.
 \label{eq:polar-slice-error}
\end{equation}

Fix distinct $i,j$. Insert and subtract $B_iQ_j^T$:
\begin{align*}
 Q_iQ_j^T-B_iB_j^T
 &=(Q_i-B_i)Q_j^T+B_i(Q_j-B_j)^T.
\end{align*}
The normalized Hilbert--Schmidt norm satisfies
$\|UV\|_{2,n}\le\|U\|_{2,n}\|V\|_{\op}$ and
$\|UV\|_{2,n}\le\|U\|_{\op}\|V\|_{2,n}$. Since $Q_j$ is orthogonal and
$\|B_i\|_{\op}\le1+\varepsilon\le2$,
\[
 \|Q_iQ_j^T-B_iB_j^T\|_{2,n}
 \le\delta_i+2\delta_j.
\]
Similarly,
\[
 \|Q_jQ_i^T-B_jB_i^T\|_{2,n}
 \le\delta_j+2\delta_i.
\]
Adding the two inequalities and using \eqref{eq:polar-slice-error},
\begin{equation}
 \begin{split}
 &\|Q_iQ_j^T+Q_jQ_i^T-(B_iB_j^T+B_jB_i^T)\|_{2,n}\\
 &\qquad\le3(\delta_i+\delta_j)
 \le6\sqrt{\Theta(\varepsilon)}.
 \end{split}
 \label{eq:pair-polar-error}
\end{equation}
Put
\[
 u_{ij}=\|B_iB_j^T+B_jB_i^T\|_{2,n},
 \qquad
 v_{ij}=\|Q_iQ_j^T+Q_jQ_i^T\|_{2,n}.
\]
By the triangle inequality and \eqref{eq:pair-polar-error},
$v_{ij}\le u_{ij}+6\sqrt{\Theta(\varepsilon)}$. Therefore
\[
 v_{ij}^2\le2u_{ij}^2+72\Theta(\varepsilon).
\]
Moreover,
\[
 u_{ij}^2
 =\frac1n\left\|\frac1n
 (M_iM_j^T+M_jM_i^T)\right\|_{\HS}^2=b_{ij}.
\]
Averaging over the selected pairs and applying
\Cref{thm:average-extraction},
\begin{align*}
 \frac1{\binom q2}\sum_{i<j}v_{ij}^2
 &\le2\frac1{\binom q2}\sum_{i<j}b_{ij}
   +72\Theta(\varepsilon)\\
 &\le\left(\frac{4(r+2)}{r-1}+72\right)
   \Theta(\varepsilon).
\end{align*}
If $r\ge6$, then $4(r+2)/(r-1)\le32/5<8$, so the last expression is at
most $80\Theta(\varepsilon)$.
\end{proof}

\subsection{Almost anticommuting matrices}

\begin{lemma}[Hermitian dilation]\label{lem:hermitian-dilation}
For $Q\in O(n)$ put
\[
 \mathcal X(Q)=
 \begin{pmatrix}0&Q\\Q^T&0\end{pmatrix}\in O(2n).
\]
Then $\mathcal X(Q)^*=\mathcal X(Q)$ and $\mathcal X(Q)^2=I_{2n}$. Moreover,
for $Q_i,Q_j\in O(n)$,
\begin{equation}\label{eq:dilation-anticommutator}
 \|\mathcal X(Q_i)\mathcal X(Q_j)+
 \mathcal X(Q_j)\mathcal X(Q_i)\|_{2,2n}^2
 =\|Q_iQ_j^T+Q_jQ_i^T\|_{2,n}^2,
\end{equation}
where $\|T\|_{2,d}=d^{-1/2}\|T\|_{\HS}$ for matrices of order $d$.
\end{lemma}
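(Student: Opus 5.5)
The plan is a direct block-matrix computation; nothing deep is needed. First, $\mathcal X(Q)$ is real and its transpose is
\[
\begin{pmatrix}0&(Q^T)^T\\Q^T&0\end{pmatrix}=\mathcal X(Q),
\]
so it is symmetric, hence self-adjoint. Multiplying the block matrices gives
\[
\mathcal X(Q)^2=\begin{pmatrix}QQ^T&0\\0&Q^TQ\end{pmatrix}=I_{2n},
\]
because $Q\in O(n)$. A symmetric matrix whose square is the identity satisfies $\mathcal X(Q)^T\mathcal X(Q)=I_{2n}$, so $\mathcal X(Q)\in O(2n)$.

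For the anticommutator identity I would compute the product of two dilations:
\[
\mathcal X(Q_i)\mathcal X(Q_j)=\begin{pmatrix}Q_iQ_j^T&0\\0&Q_i^TQ_j\end{pmatrix}.
\]
Adding the same expression with $i,j$ swapped, the anticommutator becomes block diagonal with blocks
\[
P:=Q_iQ_j^T+Q_jQ_i^T,\qquad P':=Q_i^TQ_j+Q_j^TQ_i.
\]
Hence
\[
\|\cdot\|_{\HS}^2=\|P\|_{\HS}^2+\|P'\|_{\HS}^2.
\]

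The one step needing a small trick is showing $\|P'\|_{\HS}=\|P\|_{\HS}$. I would conjugate by orthogonal matrices, which preserves the Hilbert--Schmidt norm:
\[
Q_i^T P Q_j = Q_i^TQ_iQ_j^TQ_j + Q_i^TQ_jQ_i^TQ_j = I_n+(Q_i^TQ_j)^2.
\]
Similarly,
\[
P'=Q_i^TQ_j+Q_j^TQ_i=Q_i^TQ_j\bigl(I_n+(Q_i^TQ_j)^{-1}Q_j^TQ_i\bigr)=Q_i^TQ_j\bigl(I_n+(Q_j^TQ_i)^2\bigr).
\]
That form is less clean, so it is simpler to write $W=Q_i^TQ_j\in O(n)$ and observe
\[
Q_i^TPQ_j=I+W^2=W(W^{-1}+W)=W(W^T+W)=WP'.
\]
Since $W$ and the conjugating matrices are orthogonal, $\|P\|_{\HS}=\|P'\|_{\HS}$. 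If this algebra slips, I would verify instead that $P'=Q_i^TPQ_i$ up to reordering: $Q_i^TPQ_i=Q_j^TQ_i+Q_i^TQ_jQ_i^TQ_i$. That does not work directly, so the $W$-identity above is the one to use.

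Finally, normalizing gives
\[
\|\cdot\|_{2,2n}^2=\frac{1}{2n}\bigl(2\|P\|_{\HS}^2\bigr)=\|P\|_{2,n}^2,
\]
which is \eqref{eq:dilation-anticommutator}. The only mild obstacle is the identification $\|P'\|_{\HS}=\|P\|_{\HS}$, which the orthogonal change of variables $W=Q_i^TQ_j$ settles.
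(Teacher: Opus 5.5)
Your proof is correct and follows the paper's route. The anticommutator is block diagonal with blocks $P$ and $P'$, and these have equal Hilbert--Schmidt norm because they are related by orthogonal multiplications; your identity $Q_i^TPQ_j=WP'$ is valid. Note that the alternative you dismissed is exactly the paper's identity $P=Q_iP'Q_i^T$: since $Q_i^TQ_i=I_n$, the term $Q_i^TQ_jQ_i^TQ_i$ equals $Q_i^TQ_j$, so $Q_i^TPQ_i=P'$ directly.
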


\begin{proof}
The first assertions are immediate. The anticommutator is block diagonal,
with diagonal blocks
\[
 A=Q_iQ_j^T+Q_jQ_i^T,
 \qquad
 B=Q_i^TQ_j+Q_j^TQ_i.
\]
Writing $U=Q_i^TQ_j$, one has $B=U+U^T$ and
$A=Q_i(U+U^T)Q_i^T$, so the two blocks have equal Hilbert--Schmidt norm.
Equation \eqref{eq:dilation-anticommutator} follows after normalization.
\end{proof}

For $q\ge1$ define the bilinear map
\[
 c(u,v)=\sum_{i>j}u_iv_j\in\mathbb F_2,
 \qquad u,v\in\mathbb F_2^q,
\]
and let $H_q=\mathbb F_2\times\mathbb F_2^q$ with multiplication
\begin{equation}
 (a,u)(b,v)=\bigl(a+b+c(u,v),u+v\bigr).
 \label{eq:clifford-extension-law}
\end{equation}

\begin{lemma}[A finite group with anticommuting generators]
The operation \eqref{eq:clifford-extension-law} is associative and makes
$H_q$ a group of order $2^{q+1}$. If $z=(1,0)$ and $x_i=(0,e_i)$, then
\[
 z^2=1,\qquad z\text{ is central},\qquad x_i^2=1,
 \qquad x_ix_j=zx_jx_i\quad(i\ne j).
\]
Every element has the unique normal form
$z^a x_1^{u_1}\cdots x_q^{u_q}$, with $a,u_i\in\{0,1\}$.
\end{lemma}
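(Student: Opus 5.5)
The plan is to check the group axioms directly from the law \eqref{eq:clifford-extension-law}, using that $c:\mathbb F_2^q\times\mathbb F_2^q\to\mathbb F_2$ is bilinear. For associativity, compute both $((a,u)(b,v))(d,w)$ and $(a,u)((b,v)(d,w))$. The first has first coordinate $a+b+d+c(u,v)+c(u+v,w)$. The second has $a+b+d+c(v,w)+c(u,v+w)$. Bilinearity expands both to $a+b+d+c(u,v)+c(u,w)+c(v,w)$, so they agree, and the second coordinates are both $u+v+w$. This is the standard fact that a bilinear map is a $2$-cocycle, and it is the only point needing a computation. The identity is $(0,0)$, since $c(0,v)=c(u,0)=0$. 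The inverse of $(a,u)$ is $(a+c(u,u),u)$, because $(a,u)(a+c(u,u),u)=(2a+c(u,u)+c(u,u),0)=(0,0)$ in characteristic $2$. The order is $|\mathbb F_2\times\mathbb F_2^q|=2^{q+1}$.

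For the relations, first note $z(b,v)=(1+b,v)=(b,v)z$, because $c(0,v)=c(v,0)=0$. So $z$ is central, and $z^2=(2,0)=1$. Next, $x_i^2=(c(e_i,e_i),0)$, and $c(e_i,e_i)=\sum_{k>l}\delta_{ki}\delta_{li}=0$ since no pair $k>l$ has $k=l=i$. Hence $x_i^2=1$. For $i\ne j$ we have $x_ix_j=(c(e_i,e_j),e_i+e_j)$ and $x_jx_i=(c(e_j,e_i),e_i+e_j)$. Exactly one of $c(e_i,e_j)=[i>j]$ and $c(e_j,e_i)=[j>i]$ equals $1$, so their first coordinates differ by $1$. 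Thus $x_ix_j=z\,x_jx_i$.

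For the normal form, show by induction on $k$ that
\[
 x_1^{u_1}\cdots x_k^{u_k}=\bigl(0,\textstyle\sum_{l\le k}u_le_l\bigr).
\]
The inductive step multiplies $(0,w)$, with $w$ supported on indices $<k+1$, on the right by $(0,u_{k+1}e_{k+1})$. This adds $c(w,u_{k+1}e_{k+1})=\sum_{i>j}w_iu_{k+1}[j=k+1]$, which vanishes because it needs $i>k+1$ with $w_i\ne0$. The ordering convention $i>j$ in $c$ is exactly what makes this work. Then $z^a\cdot(0,u)=(a,u)$, so the map $(a,u_1,\dots,u_q)\mapsto z^ax_1^{u_1}\cdots x_q^{u_q}$ is the identity on $\mathbb F_2\times\mathbb F_2^q$. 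This gives both existence and uniqueness of the normal form. No real obstacle is expected beyond the cocycle identity.
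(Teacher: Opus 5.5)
Your proposal is correct and follows the paper's proof essentially step by step. Associativity comes from the cocycle identity given by bilinearity of $c$. The inverse is the explicit element $(a+c(u,u),u)$. The relations come from evaluating $c$ directly on $0$ and on basis vectors.

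Your inductive check that $z^a x_1^{u_1}\cdots x_q^{u_q}=(a,u)$ uses that $c(w,e_{k+1})=0$ when $w$ is supported on indices $\le k$. This is a more explicit version of the paper's one-line justification that both sides have the same prescribed coordinates.
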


\begin{proof}
The only nontrivial point for associativity is the central coordinate. Let
\[
 (a,u),\qquad (b,v),\qquad (d,w)\in H_q.
\]
The central coordinate of
$((a,u)(b,v))(d,w)$ is
\[
 a+b+d+c(u,v)+c(u+v,w).
\]
The central coordinate of $(a,u)((b,v)(d,w))$ is
\[
 a+b+d+c(v,w)+c(u,v+w).
\]
Since $c$ is bilinear,
\[
 c(u,v)+c(u+v,w)
 =c(u,v)+c(u,w)+c(v,w)
 =c(v,w)+c(u,v+w),
\]
so the two products agree. The identity is $(0,0)$, and
\[
 (a,u)^{-1}=\bigl(a+c(u,u),u\bigr),
\]
which follows by direct substitution. Thus $H_q$ is a group, and its order
is the cardinality of the underlying set, namely $2^{q+1}$.

The element $z=(1,0)$ is central because $c(0,u)=c(u,0)=0$, and $z^2=1$.
Also $c(e_i,e_i)=0$, so $x_i^2=1$. If $i<j$, then
$c(e_i,e_j)=0$ while $c(e_j,e_i)=1$. Hence
\[
 x_ix_j=(0,e_i+e_j),
 \qquad
 x_jx_i=(1,e_i+e_j)=zx_ix_j,
\]
which is equivalent to $x_ix_j=zx_jx_i$. Finally, the map
$(a,u)\mapsto z^a x_1^{u_1}\cdots x_q^{u_q}$ is a bijection because both
sides have the same explicitly prescribed coordinates; this gives existence
and uniqueness of the normal form.
\end{proof}

\begin{lemma}[Representations with $\pi(z)=-I$]\label{lem:clifford-sector}
Let $\pi$ be an irreducible complex representation of $H_q$ such that
$\pi(z)=-I$. Then
\[
 \dim\pi= D_q:=2^{\lfloor q/2\rfloor}.
\]
\end{lemma}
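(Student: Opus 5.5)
The plan is to pin down $\dim\pi$ with the character formula: compute the character of $\pi$ on every element of $H_q$ and then use $\langle\chi_\pi,\chi_\pi\rangle=1$ for irreducible $\pi$. Throughout, write elements in the normal form $g=z^a x^u$, where $x^u:=x_1^{u_1}\cdots x_q^{u_q}$.

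\emph{Commutators.} From $x_ix_j=zx_jx_i$ and the centrality of $z$, one gets for $u,v\in\mathbb F_2^q$
\[
x^u x^v=z^{\beta(u,v)}x^v x^u,\qquad \beta(u,v)=c(u,v)+c(v,u)=\sum_{i\ne j}u_iv_j .
\]
This symmetric bilinear form simplifies to $\beta(u,v)=|u|\,|v|+\langle u,v\rangle \pmod 2$, where $|u|$ is the weight of $u$ and $\langle\cdot,\cdot\rangle$ is the standard dot product.

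\emph{Character vanishing.} Fix $u\neq0$ and suppose some $v$ satisfies $\beta(u,v)=1$. Then $x^v x^u (x^v)^{-1}=z\,x^u$. Conjugation preserves the character, so $\chi(x^u)=\chi(zx^u)=-\chi(x^u)$, using $\pi(z)=-I$. Hence $\chi(x^u)=0$, and likewise $\chi(z^ax^u)=0$. Now determine the radical $R=\{u:\beta(u,\cdot)\equiv0\}$. The condition is $u_j=|u|$ for all $j$ (mod $2$), so:
\begin{itemize}
\item either $u=0$;
\item or $u=\mathbf 1$, and then $|u|=q\equiv1$, which forces $q$ odd.
\end{itemize}
So $R=\{0\}$ for $q$ even and $R=\{0,\mathbf 1\}$ for $q$ odd.

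\emph{Orthogonality, $q$ even.} Only $g\in\{1,z\}$ can have nonzero character, with $\chi(1)=d$ and $\chi(z)=-d$. Then
\[
1=\frac{1}{2^{q+1}}(d^2+d^2),
\]
which gives $d=2^{q/2}$.

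\emph{Orthogonality, $q$ odd.} Now $x^{\mathbf 1}$ is central. By Schur's lemma it acts as a scalar $\lambda$, and since $(x^{\mathbf 1})^2$ is a power of $z$, we get $|\lambda|=1$. The four central elements $z^a x^{a'\mathbf 1}$ each have $|\chi|=d$, so
\[
1=\frac{4d^2}{2^{q+1}},
\]
which gives $d=2^{(q-1)/2}$.

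Both cases give $d=2^{\lfloor q/2\rfloor}=D_q$, as claimed. The main obstacle is the radical computation, in particular verifying that $\beta$ simplifies to $|u||v|+\langle u,v\rangle$. For small $q$ this can be double-checked directly against the commutation relations.
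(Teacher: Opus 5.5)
Your proof is correct. It takes a genuinely different route from the paper's.

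\textbf{The paper's route.} The paper observes that $\pi(z)=-I$ turns $e_i\mapsto\pi(x_i)$ into a unital representation of the complex Clifford algebra $\mathrm{Cl}_q(\C)$. It then invokes the structure theorem $\mathrm{Cl}_{2s}(\C)\cong M_{2^s}(\C)$ and $\mathrm{Cl}_{2s+1}(\C)\cong M_{2^s}(\C)\oplus M_{2^s}(\C)$, obtained from the Pauli-matrix recursion and cited from Porteous. Hence every irreducible module has dimension $2^{\lfloor q/2\rfloor}$.

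\textbf{Your route.} You stay inside finite-group character theory. The commutator form satisfies $\beta(u,v)=\sum_{i\ne j}u_iv_j=|u||v|-\langle u,v\rangle$ as an integer identity, so the mod-$2$ simplification you flagged as the main obstacle is immediate. This shows that $\chi_\pi$ vanishes except on elements $z^a x^u$ with $u$ in the radical of $\beta$, that is, except on the center $Z(H_q)$. The relation $\langle\chi_\pi,\chi_\pi\rangle=1$ then forces $d^2=[H_q:Z(H_q)]$, which equals $2^q$ or $2^{q-1}$ according to the parity of $q$.

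\textbf{What each buys.} Your argument is self-contained and needs no external structure theorem. It also makes transparent exactly where the parity of $q$ enters: the extra central element $x^{\mathbf 1}$ when $q$ is odd, including the degenerate abelian case $q=1$. The paper's version gives, in addition, an explicit matrix model and the number of such irreducibles (one or two). It also fits the Clifford-algebra theme of the surrounding argument. Your method can recover the count as well: the idempotent $\tfrac12(1-z)$ shows that the irreducibles with $\pi(z)=-I$ satisfy $\sum d_\pi^2=2^q$. The lemma itself does not need this.
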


\begin{proof}
Because $\pi(z)=-I$, the defining relations of $H_q$ become
\[
 \pi(x_i)^2=I,
 \qquad
 \pi(x_i)\pi(x_j)=-\pi(x_j)\pi(x_i)
 \quad(i\ne j).
\]
Thus the assignment $e_i\mapsto\pi(x_i)$ extends uniquely to a unital
algebra homomorphism from the complex Clifford algebra
$\mathrm{Cl}_q(\C)$, generated by $e_1,\ldots,e_q$ with
$e_i^2=1$ and $e_ie_j=-e_je_i$, into $\operatorname{End}(V_\pi)$.
Irreducibility of $\pi$ as an $H_q$-representation is equivalent to
irreducibility as a module over this algebra, because the image algebra is
generated by the same operators $\pi(x_i)$.

We use the following standard complex classification. With
\[
 \sigma_1=\begin{pmatrix}0&1\\1&0\end{pmatrix},\quad
 \sigma_2=\begin{pmatrix}0&-i\\i&0\end{pmatrix},\quad
 \sigma_3=\begin{pmatrix}1&0\\0&-1\end{pmatrix},
\]
one obtains the recursion
\[
 \mathrm{Cl}_{q+2}(\C)
 \cong \mathrm{Cl}_q(\C)\otimes M_2(\C)
\]
by sending the old generators to $e_j\otimes\sigma_3$ and the two new
generators to $1\otimes\sigma_1$ and $1\otimes\sigma_2$. Starting from
\[
 \mathrm{Cl}_0(\C)=\C,
 \qquad
 \mathrm{Cl}_1(\C)\cong\C\oplus\C,
\]
induction gives
\[
 \mathrm{Cl}_{2s}(\C)\cong M_{2^s}(\C),
 \qquad
 \mathrm{Cl}_{2s+1}(\C)
 \cong M_{2^s}(\C)\oplus M_{2^s}(\C).
\]
The irreducible modules of $M_k(\C)$ have dimension $k$, and each
simple summand in the odd case again has $k=2^s$. Therefore every
irreducible representation with $\pi(z)=-I$ has dimension
$2^{\lfloor q/2\rfloor}$. See, for example, \cite{Porteous} for the
structure theory of complex Clifford algebras.
\end{proof}

For $d\in\mathbb{N}$, we denote by $U(d)$ the group of $d\times d$
unitary complex matrices.

We use the following form of matrix Fourier analysis. All
expectations are uniform. The required facts are standard; see, for example,
\cite{Serre}. Let $G$ be a finite group
and let $\widehat G$ be a complete set of irreducible complex unitary
representations. For $f:G\to M_d(\C)$ and $\pi\in\widehat G$, define
the matrix Fourier block
\begin{equation}
 \Fblock_\pi(f)=\mathbb E_{g\in G}\bigl[f(g)\otimes\pi(g)\bigr]
 \in M_d(\C)\otimes M_{d_\pi}(\C),
 \qquad d_\pi=\dim\pi.
 \label{eq:Fourier-block-defined}
\end{equation}

\begin{lemma}[Matrix Plancherel and the cubic identity]

For every $f:G\to M_d(\C)$,
\begin{equation}
 \sum_{\pi\in\widehat G}d_\pi
 \|\Fblock_\pi(f)\|_{\HS}^2
 =\mathbb E_g\|f(g)\|_{\HS}^2.
 \label{eq:matrix-plancherel}
\end{equation}
Moreover,
\begin{equation}
 \mathbb E_{g,h}\tr
 \bigl(f(gh)^*f(g)f(h)\bigr)
 =\sum_{\pi\in\widehat G}d_\pi
 \tr\bigl(
 \Fblock_\pi(f)\Fblock_\pi(f)^*\Fblock_\pi(f)\bigr).
 \label{eq:cubic-fourier-identity}
\end{equation}
\end{lemma}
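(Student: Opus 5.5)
Both identities will follow from expanding $f$ in the matrix coefficients of the irreducible representations and applying the Schur orthogonality relations. The main tool is the standard orthogonality
\[
 \mathbb E_{g}\,\pi(g)_{ab}\overline{\pi'(g)_{cd}}
 =\frac{\delta_{\pi\pi'}\delta_{ac}\delta_{bd}}{d_\pi}
\]
for $\pi,\pi'\in\widehat G$ unitary. We also use the fact that the normalized coefficient functions $\sqrt{d_\pi}\,\pi_{ab}$ form an orthonormal basis of $L^2(G)$ for the uniform inner product; this is Peter--Weyl for finite groups, via $\sum_\pi d_\pi^2=|G|$. Everything is applied entrywise in the $M_d(\C)$ factor.

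\emph{Plancherel.} Write $\Fblock_\pi(f)=\sum_{a,b}\mathbb E_g[f(g)\,\pi(g)_{ab}]\otimes E_{ab}$, where $E_{ab}$ are matrix units in $M_{d_\pi}(\C)$. Then
\[
 \|\Fblock_\pi(f)\|_{\HS}^2=\sum_{k,l}\sum_{a,b}\bigl|\mathbb E_g f_{kl}(g)\pi(g)_{ab}\bigr|^2 .
\]
For each fixed entry $f_{kl}$, the quantity $\mathbb E_g f_{kl}(g)\pi(g)_{ab}$ is the $L^2$ inner product of $f_{kl}$ with $\overline{\pi_{ab}}$. The functions $\sqrt{d_\pi}\,\overline{\pi_{ab}}$ also form an orthonormal basis, since the conjugate representations run over $\widehat G$ up to equivalence. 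Parseval therefore gives
\[
 \sum_\pi d_\pi\sum_{a,b}\bigl|\langle f_{kl},\overline{\pi_{ab}}\rangle\bigr|^2=\mathbb E_g|f_{kl}(g)|^2 .
\]
Summing over $k,l$ yields \eqref{eq:matrix-plancherel}.

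\emph{Cubic identity.} I would first establish the inversion formula
\[
 f(g)=\sum_\pi d_\pi(\mathrm{id}\otimes\tr)\bigl[\Fblock_\pi(f)\,(I_d\otimes\pi(g)^*)\bigr].
\]
This follows from the same orthonormal-basis statement, or equivalently from $\sum_\pi d_\pi\chi_\pi(g)=|G|\delta_{g,e}$. Next, the convolution property: for $f_1,f_2$, the function $(f_1\ast f_2)(x):=\mathbb E_{h}f_1(xh^{-1})f_2(h)$ satisfies $\Fblock_\pi(f_1\ast f_2)=\Fblock_\pi(f_1)\Fblock_\pi(f_2)$, by the substitution $g=xh$ and the homomorphism property $\pi(xh)=\pi(x)\pi(h)$. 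With these in hand, the left side of \eqref{eq:cubic-fourier-identity} is
\[
 \mathbb E_{x}\tr\bigl(f(x)^*(f\ast f)(x)\bigr),
\]
obtained by setting $x=gh$. The bilinear version of Plancherel, obtained by polarization of \eqref{eq:matrix-plancherel}, reads
\[
 \mathbb E_x\tr(F_1(x)^*F_2(x))=\sum_\pi d_\pi\tr\bigl(\Fblock_\pi(F_1)^*\Fblock_\pi(F_2)\bigr).
\]
Taking $F_1=f$ and $F_2=f\ast f$, this becomes $\sum_\pi d_\pi\tr(\Fblock_\pi(f)^*\Fblock_\pi(f)\Fblock_\pi(f))$. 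Cyclicity of the trace turns this into the stated form $\tr(\Fblock\Fblock^*\Fblock)$.

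The main obstacle is bookkeeping of conjugations and orderings. The definition uses $\pi(g)$ rather than $\pi(g)^*$, so the convolution must be set up with the order $f_1(xh^{-1})f_2(h)$ so that $\pi(xh^{-1})\pi(h)=\pi(x)$ multiplies in the correct order inside the tensor product. In addition, the polarization must use the inner product $\tr(X^*Y)$ on $M_d\otimes M_{d_\pi}$, consistent with $\|\cdot\|_{\HS}$. I would verify these orientation choices on a one-dimensional example such as $G=\mathbb Z_2$ and $d=1$ before writing the general computation.
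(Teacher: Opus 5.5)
Your proposal is correct and follows the paper's proof of the cubic identity step for step: your convolution $\mathbb E_h f_1(xh^{-1})f_2(h)$ is the paper's $\mathbb E_y a(y)b(y^{-1}x)$, and the chain ``rewrite the cubic average as $\mathbb E_x\tr(f(x)^*(f*f)(x))$, apply the sesquilinear Plancherel identity and $\Fblock_\pi(f*f)=\Fblock_\pi(f)^2$, then use cyclicity'' is exactly theirs. The only difference is in Plancherel itself: you obtain it from Parseval for the orthonormal basis of matrix coefficients and then polarize, whereas the paper verifies the sesquilinear version directly from $\sum_\pi d_\pi\tr\pi(t)=|G|\mathbf 1_{\{t=e\}}$ and specializes; your inversion formula is correct but is never used.
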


\begin{proof}
For matrix-valued functions $a,b:G\to M_d(\C)$ define the normalized
convolution
\[
 (a*b)(x)=\mathbb E_{y\in G}a(y)b(y^{-1}x).
\]
The choice of order is important because the values need not commute. A
change of variables $x=yz$ and the homomorphism property of $\pi$ give
\begin{align*}
 \Fblock_\pi(a*b)
 &=\mathbb E_{x,y}\bigl[a(y)b(y^{-1}x)\otimes\pi(x)\bigr]\\
 &=\mathbb E_{y,z}\bigl[a(y)b(z)\otimes\pi(y)\pi(z)\bigr]\\
 &=\Fblock_\pi(a)\Fblock_\pi(b).
\end{align*}

We also use the matrix-valued Plancherel inner-product identity
\begin{equation}
 \mathbb E_x\tr\bigl(a(x)^*b(x)\bigr)
 =\sum_{\pi\in\widehat G}d_\pi
  \tr\bigl(\Fblock_\pi(a)^*\Fblock_\pi(b)\bigr).
 \label{eq:matrix-Plancherel-inner}
\end{equation}
To verify it directly, expand the right-hand side:
\begin{align*}
 &\sum_{\pi}d_\pi\tr
 \left(
  \mathbb E_x[a(x)^*\otimes\pi(x)^*]
  \mathbb E_y[b(y)\otimes\pi(y)]
 \right)\\
 &\qquad=\mathbb E_{x,y}\tr(a(x)^*b(y))
 \sum_{\pi}d_\pi\tr(\pi(x^{-1}y)).
\end{align*}
The regular-character identity
\[
 \sum_{\pi\in\widehat G}d_\pi\tr(\pi(t))
 =|G|\,\mathbf 1_{\{t=e\}}
\]
reduces the last expression to
$\mathbb E_x\tr(a(x)^*b(x))$, proving
\eqref{eq:matrix-Plancherel-inner}. Taking $a=b=f$ gives
\eqref{eq:matrix-plancherel}.

For the cubic identity, observe first that
\begin{align*}
 \mathbb E_{g,h}\tr
 \bigl(f(gh)^*f(g)f(h)\bigr)
 &=\mathbb E_x\tr
 \bigl(f(x)^*(f*f)(x)\bigr).
\end{align*}
Indeed, for fixed $x$, the substitution $x=gh$ identifies the average in
$g$ with the convolution defining $(f*f)(x)$. Applying
\eqref{eq:matrix-Plancherel-inner} and the convolution formula above yields
\begin{align*}
 \mathbb E_x\tr
 \bigl(f(x)^*(f*f)(x)\bigr)
 &=\sum_{\pi}d_\pi\tr
 \bigl(\Fblock_\pi(f)^*\Fblock_\pi(f*f)\bigr)\\
 &=\sum_{\pi}d_\pi\tr
 \bigl(\Fblock_\pi(f)^*\Fblock_\pi(f)^2\bigr)\\
 &=\sum_{\pi}d_\pi\tr
 \bigl(\Fblock_\pi(f)\Fblock_\pi(f)^*\Fblock_\pi(f)\bigr),
\end{align*}
where the last equality is cyclicity of the trace. This proves
\eqref{eq:cubic-fourier-identity}.
\end{proof}

\begin{lemma}[Operator norm of a Fourier block]
\label{lem:fourier-block-op}
If $f:G\to U(d)$ and $\pi\in\widehat G$, then
\begin{equation}
 \|\Fblock_\pi(f)\|_{\op}\le\sqrt{\frac d{d_\pi}}.
 \label{eq:fourier-block-op}
\end{equation}
\end{lemma}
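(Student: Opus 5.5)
We need to bound $\|\Fblock_\pi(f)\|_{\op}$ where $\Fblock_\pi(f)=\mathbb E_g[f(g)\otimes\pi(g)]$ with each $f(g)$ unitary of size $d$ and $\pi$ irreducible unitary of dimension $d_\pi$. The plan is to compute the positive operator $\Fblock_\pi(f)^*\Fblock_\pi(f)$ and compare it with something whose norm we control through Schur orthogonality. A convenient route is to test $\Fblock_\pi(f)$ against unit vectors $\xi\in\C^d\otimes\C^{d_\pi}$ and bound $\|\Fblock_\pi(f)\xi\|$ by Cauchy--Schwarz over $g$, combined with an averaging identity in the $\pi$-factor.

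The concrete steps are as follows. First, identify $\C^d\otimes\C^{d_\pi}$ with $d\times d_\pi$ matrices $X$, with $\|X\|_{\HS}=1$. Then $(f(g)\otimes\pi(g))X$ corresponds to $f(g)X\pi(g)^T$. Next, pair with a second unit matrix $Y$ and write
\[
\langle Y,\Fblock_\pi(f)X\rangle=\mathbb E_g\tr\bigl(Y^*f(g)X\pi(g)^T\bigr).
\]
Apply Cauchy--Schwarz in the form $|\tr(Y^*f(g)X\pi(g)^T)|=|\langle f(g)^*Y,\,X\pi(g)^T\rangle_{\HS}|$, and use the weighted inequality
\[
|\langle U,V\rangle|\le\tfrac12\bigl(t\|U\|^2+t^{-1}\|V\|^2\bigr)
\]
in a way that keeps the $\pi(g)$-dependence inside a quadratic expression. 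Cauchy--Schwarz alone only gives the bound $1$, which is insufficient. The key input is instead the Schur orthogonality identity
\[
\mathbb E_g\,\pi(g)^T Z\,\overline{\pi(g)}=\frac{\tr Z}{d_\pi}I
\]
for irreducible $\pi$, or equivalently $\mathbb E_g\,\pi(g)_{ab}\overline{\pi(g)_{ce}}=\delta_{ac}\delta_{be}/d_\pi$.

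A cleaner way to organize the argument is as follows. Write $\Fblock_\pi(f)=\sum_{a,b}F_{ab}\otimes E_{ab}$, where $F_{ab}=\mathbb E_g f(g)\pi(g)_{ab}\in M_d$. Then bound $\|\Fblock_\pi(f)\|_{\op}^2\le\|\Fblock_\pi(f)^*\Fblock_\pi(f)\|_{\op}$ by a column/row estimate. For a unit vector $\xi=\sum_b\xi_b\otimes e_b$,
\[
\|\Fblock_\pi(f)\xi\|^2=\mathbb E_{g,h}\sum_{a}\Bigl\langle f(h)\sum_{b'}\pi(h)_{ab'}\xi_{b'},\,f(g)\sum_b\pi(g)_{ab}\xi_b\Bigr\rangle.
\]
Instead of expanding this fully, use duality: $\|\Fblock_\pi(f)\|_{\op}=\sup|\mathbb E_g\langle \eta,(f(g)\otimes\pi(g))\xi\rangle|$. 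Writing $\xi,\eta$ as matrices $X,Y$, we get
\[
|\mathbb E_g\tr(Y^*f(g)X\pi(g)^T)|\le\bigl(\mathbb E_g\|f(g)^*Y\|_{\HS}^2\bigr)^{1/2}\bigl(\mathbb E_g\|P_g\|^2\bigr)^{1/2}
\]
after a suitable splitting. The right splitting is via the rank: $X$ has rank at most $\min(d,d_\pi)$, so one factor gains $d/d_\pi$ through Schur averaging $\mathbb E_g\|A\pi(g)B\|^2$-type identities, namely $\mathbb E_g|\tr(C\pi(g))|^2=\|C\|_{\HS}^2/d_\pi$. Concretely, I would expand $f(g)=\sum_{k,l}f_{kl}(g)E_{kl}$ and write
\[
\langle Y,\Fblock_\pi(f)X\rangle=\sum_{k,l}\mathbb E_g f_{kl}(g)\tr(C_{kl}\pi(g)^T)
\]
for suitable $C_{kl}$ built from rows of $X$ and $Y$. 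Then apply Cauchy--Schwarz over $g$ and $(k,l)$, using $\sum_{k,l}|f_{kl}(g)|^2=\|f(g)\|_{\HS}^2=d$ for unitary $f(g)$ together with the Schur identity $\mathbb E_g|\tr(C\pi(g)^T)|^2=\|C\|_{\HS}^2/d_\pi$. It remains to check that $\sum_{k,l}\|C_{kl}\|_{\HS}^2\le\|X\|_{\HS}^2\|Y\|_{\HS}^2=1$, where $C_{kl}$ is the rank-one matrix formed from the $l$-th row of $X$ and the $k$-th row of $Y$. This yields exactly $\sqrt{d/d_\pi}$.

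The main obstacle is that the Cauchy--Schwarz step mixes the expectation over $g$ and the sum over $(k,l)$. One must bound
\[
\mathbb E_g\sum_{k,l}|f_{kl}(g)|\,|\tr(C_{kl}\pi(g)^T)|\le\bigl(\mathbb E_g\textstyle\sum|f_{kl}|^2\bigr)^{1/2}\bigl(\textstyle\sum_{k,l}\mathbb E_g|\tr(C_{kl}\pi(g)^T)|^2\bigr)^{1/2}=\sqrt d\cdot d_\pi^{-1/2}.
\]
This requires carefully applying the Schur orthogonality relations for the irreducible unitary representation $\pi$, and verifying the row-norm bookkeeping for $C_{kl}$.
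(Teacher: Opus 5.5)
Your final argument is correct and is essentially the paper's proof: you pair $\Fblock_\pi(f)$ against two unit vectors and apply Cauchy--Schwarz jointly over $g$ and an index pair, with unitarity of $f(g)$ supplying the factor $d$ and Schur orthogonality supplying $1/d_\pi$. The only difference is bookkeeping. You expand $f(g)$ in the standard basis of $\C^d$, so the test vectors enter through the rank-one matrices $C_{kl}$, built from the $k$-th row of $Y$ and the $l$-th row of $X$; these satisfy $\sum_{k,l}\|C_{kl}\|_{\HS}^2=\|X\|_{\HS}^2\|Y\|_{\HS}^2=1$ with equality, whereas the paper uses Schmidt decompositions of $\xi,\eta$. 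The earlier detours through weighted AM--GM and rank heuristics are unnecessary.
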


\begin{proof}
The block $\Fblock_\pi(f)$ need not be self-adjoint or normal, so we use two
independent unit vectors. Let $\xi,\eta\in\C^d\otimes V_\pi$ have
norm one. Choose Schmidt decompositions
\[
 \xi=\sum_{i=1}^{s}\alpha_i a_i\otimes u_i,
 \qquad
 \eta=\sum_{j=1}^{t}\beta_j b_j\otimes v_j,
\]
where $s,t\le\min\{d,d_\pi\}$, the families $(a_i)$ and $(b_j)$ are
orthonormal in $\C^d$, the families $(u_i)$ and $(v_j)$ are
orthonormal in $V_\pi$, and
\[
 \sum_{i=1}^{s}|\alpha_i|^2=\sum_{j=1}^{t}|\beta_j|^2=1.
\]
Using \eqref{eq:Fourier-block-defined},
\begin{align*}
 \langle\xi,\Fblock_\pi(f)\eta\rangle
 &=\mathbb E_g\sum_{i,j}
 \overline{\alpha_i}\beta_j
 \langle a_i,f(g)b_j\rangle
 \langle u_i,\pi(g)v_j\rangle.
\end{align*}
Apply Cauchy--Schwarz to the joint index $(g,i,j)$:
\begin{align}
 |\langle\xi,\Fblock_\pi(f)\eta\rangle|^2
 &\le
 \left(\mathbb E_g\sum_{i,j}
 |\langle a_i,f(g)b_j\rangle|^2\right)\\
 &\quad\times
 \left(\mathbb E_g\sum_{i,j}|\alpha_i|^2|\beta_j|^2
 |\langle u_i,\pi(g)v_j\rangle|^2\right).
 \label{eq:two-Schmidt-CS}
\end{align}
For the first factor, completing the two orthonormal systems to bases if
necessary,
\[
 \sum_{i,j}|\langle a_i,f(g)b_j\rangle|^2
 \le\|f(g)\|_{\HS}^2=d.
\]
For the second factor, Schur orthogonality for matrix coefficients of the
irreducible representation $\pi$ gives, for unit vectors $u,v$,
\[
 \mathbb E_g|\langle u,\pi(g)v\rangle|^2=\frac1{d_\pi}.
\]
Consequently the second factor in \eqref{eq:two-Schmidt-CS} equals
\[
 \frac1{d_\pi}\sum_{i,j}|\alpha_i|^2|\beta_j|^2
 =\frac1{d_\pi}.
\]
Thus
\[
 |\langle\xi,\Fblock_\pi(f)\eta\rangle|
 \le\sqrt{\frac d{d_\pi}}.
\]
Taking the supremum over independent unit vectors $\xi$ and $\eta$ proves
\eqref{eq:fourier-block-op}. The use of two vectors is essential:
restricting to $\langle\xi,\Fblock_\pi\xi\rangle$ would control only the
numerical radius for a general nonnormal block.
\end{proof}

\begin{lemma}[A Fourier estimate]
\label{lem:cubic-fourier-support}
Let $f:G\to U(d)$ and let $\Omega\subset\widehat G$ contain every
$\pi$ for which $\Fblock_\pi(f)\ne0$. If the trivial representation is not
in $\Omega$ and $D=\min_{\pi\in\Omega}d_\pi$, then
\begin{equation}
 \operatorname{Re}\mathbb E_{g,h}
 \tr\bigl(f(gh)^*f(g)f(h)\bigr)
 \le d\sqrt{\frac dD}.
\end{equation}
\end{lemma}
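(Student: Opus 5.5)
The plan is to start from the cubic identity \eqref{eq:cubic-fourier-identity}. It expresses the quantity to be bounded as
\[
 \sum_{\pi\in\widehat G}d_\pi\tr\bigl(\Fblock_\pi(f)\Fblock_\pi(f)^*\Fblock_\pi(f)\bigr),
\]
and only representations $\pi\in\Omega$ contribute, since every other block vanishes. We then bound each summand by the operator norm of one factor times a Hilbert--Schmidt quantity, and finish with Plancherel.

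For fixed $\pi\in\Omega$, write $F=\Fblock_\pi(f)$. We use $|\tr(XY)|\le\|X\|_{\op}\|Y\|_{\mathrm{tr}}$ with $X=F$ and $Y=FF^*$. Since $FF^*$ is positive semidefinite, its trace norm is $\tr(FF^*)=\|F\|_{\HS}^2$. Hence
\[
 \operatorname{Re}\tr(FF^*F)\le|\tr(FF^*F)|\le\|F\|_{\op}\,\|F\|_{\HS}^2 .
\]
By \Cref{lem:fourier-block-op}, $\|F\|_{\op}\le\sqrt{d/d_\pi}\le\sqrt{d/D}$, because $d_\pi\ge D$ for every $\pi\in\Omega$.

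Summing over $\Omega$ with weights $d_\pi$ gives
\[
 \operatorname{Re}\mathbb E_{g,h}\tr\bigl(f(gh)^*f(g)f(h)\bigr)\le\sqrt{\frac dD}\sum_{\pi\in\Omega}d_\pi\|\Fblock_\pi(f)\|_{\HS}^2 .
\]
The sum over $\Omega$ is at most the full Plancherel sum \eqref{eq:matrix-plancherel}, since all terms are nonnegative. That full sum equals $\mathbb E_g\|f(g)\|_{\HS}^2=d$, because each $f(g)$ is unitary of order $d$. This gives the bound $d\sqrt{d/D}$.

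The trivial representation plays no role in this inequality; excluding it only ensures $D$ is the minimum over the relevant nontrivial blocks. The only delicate point is that $F$ need not be normal. This is why we use the trace-norm/operator-norm duality together with the genuine operator-norm bound of \Cref{lem:fourier-block-op}, rather than a numerical-radius bound.
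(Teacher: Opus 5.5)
Your proposal is correct and follows essentially the same route as the paper. Both proofs apply the cubic identity, bound each block by $\|F\|_{\op}\|FF^*\|_1=\|F\|_{\op}\|F\|_{\HS}^2$, use \Cref{lem:fourier-block-op} with $d_\pi\ge D$ on $\Omega$, and finish with Plancherel.
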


\begin{proof}
For any matrix $F$,
\[
 |\tr(FF^*F)|
 \le\|F\|_{\op}\|FF^*\|_1
 =\|F\|_{\op}\|F\|_{\HS}^2.
\]
Using the cubic identity, \Cref{lem:fourier-block-op}, and then
Plancherel,
\begin{align*}
 \operatorname{Re}\mathbb E_{g,h}
 \tr(f(gh)^*f(g)f(h))
 &\le\sum_{\pi\in\Omega}d_\pi
 \|\Fblock_\pi(f)\|_{\op}\|\Fblock_\pi(f)\|_{\HS}^2\\
 &\le\sqrt{\frac dD}
 \sum_{\pi\in\Omega}d_\pi\|\Fblock_\pi(f)\|_{\HS}^2\\
 &\le\sqrt{\frac dD}\,\mathbb E_g\|f(g)\|_{\HS}^2\\
 &=d\sqrt{\frac dD}.
\end{align*}
\end{proof}

\begin{proposition}[A Fourier lower bound]
\label{prop:central-character-obstruction}
Let $G$ be a finite group, let $z\in G$ be a central involution, and let
$f:G\to U(d)$ satisfy
\[
 f(zg)=-f(g)\qquad(g\in G).
\]
Then there exists an irreducible representation satisfying $\pi(z)=-I$.
If $D_-$ denotes the least dimension of such a representation, then
\begin{equation}\label{eq:central-character-lower}
 \mathbb E_{g,h}
 \|f(gh)-f(g)f(h)\|_{2,d}^2
 \ge2\left(1-\sqrt{\frac d{D_-}}\right).
\end{equation}
\end{proposition}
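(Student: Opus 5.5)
Expanding the square first will give the form to which \Cref{lem:cubic-fourier-support} applies. Since $f(gh)$ and $f(g)f(h)$ are unitary, $\|f(gh)\|_{\HS}^2=\|f(g)f(h)\|_{\HS}^2=d$, and so
\[
 \|f(gh)-f(g)f(h)\|_{2,d}^2
 =2-\frac2d\operatorname{Re}\tr\bigl(f(gh)^*f(g)f(h)\bigr).
\]
Averaging over $g,h$, the claim \eqref{eq:central-character-lower} becomes
\[
 \operatorname{Re}\mathbb E_{g,h}\tr\bigl(f(gh)^*f(g)f(h)\bigr)\le d\sqrt{d/D_-}.
\]
This is exactly \Cref{lem:cubic-fourier-support} with $D=D_-$, once I show that one may take $\Omega=\{\pi\in\widehat G:\pi(z)=-I\}$.

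The key step is therefore the support statement $\Fblock_\pi(f)=0$ unless $\pi(z)=-I$. Because $z$ is central and $z^2=1$, Schur's lemma gives $\pi(z)=\pm I$ for every irreducible $\pi$. Substituting $g\mapsto zg$ in the uniform average,
\[
 \Fblock_\pi(f)=\mathbb E_g\bigl[f(zg)\otimes\pi(zg)\bigr]
 =-\mathbb E_g\bigl[f(g)\otimes\pi(z)\pi(g)\bigr]
 =-\Fblock_\pi(f)\,(I\otimes\pi(z)).
\]
If $\pi(z)=I$, this forces $\Fblock_\pi(f)=0$. In particular the trivial representation is not in $\Omega$, so the hypothesis of \Cref{lem:cubic-fourier-support} holds.

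It remains to see that $\Omega$ is nonempty, so that $D_-$ is defined. By Plancherel \eqref{eq:matrix-plancherel},
\[
 \sum_\pi d_\pi\|\Fblock_\pi(f)\|_{\HS}^2=\mathbb E_g\|f(g)\|_{\HS}^2=d>0,
\]
so some block is nonzero, and by the support statement it belongs to a $\pi$ with $\pi(z)=-I$. (Alternatively, $f(zg)=-f(g)$ forces $z\neq e$, and the regular representation contains an irreducible on which $z$ acts as $-1$.) With $D=D_-$, \Cref{lem:cubic-fourier-support} now gives the displayed bound, and substituting it into the expansion yields \eqref{eq:central-character-lower}.

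I expect no serious obstacle. The one point needing care is the ordering convention in the tensor product when pulling out $\pi(z)$: it must come out as a scalar $\pm1$ acting on the second factor. Schur's lemma guarantees this, since $\pi(z)=\pm I$.
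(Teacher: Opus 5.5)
Your proposal is correct and matches the paper's proof step for step. The substitution $g\mapsto zg$ together with Schur's lemma kills every Fourier block with $\pi(z)=I$, Plancherel guarantees a nonzero block and hence some $\pi$ with $\pi(z)=-I$, and \Cref{lem:cubic-fourier-support} with $D=D_-$, combined with the unitary expansion of the square, gives the bound (writing $I\otimes\pi(z)$ on the right rather than on the left, as the paper does, is harmless because $z$ is central).
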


\begin{proof}
Pairing each $g$ with $zg$ gives
\[
 \mathbb E_gf(g)
 =\frac12\mathbb E_g\bigl(f(g)+f(zg)\bigr)=0,
\]
so the trivial Fourier block vanishes. More generally, for every irrep
$\pi$, changing variables $g\mapsto zg$ in
\eqref{eq:Fourier-block-defined} gives
\begin{align}
 \Fblock_\pi(f)
 &=\mathbb E_g[f(zg)\otimes\pi(zg)]\\
 &=-\mathbb E_g[f(g)\otimes\pi(z)\pi(g)]\\
 &=-(I_d\otimes\pi(z))\Fblock_\pi(f).
 \label{eq:central-fourier-support}
\end{align}
Since $z$ is central, Schur's lemma implies
$\pi(z)=\omega_\pi I$ for a scalar $\omega_\pi$. Because $z^2=1$ and $\pi$
is unitary, $\omega_\pi\in\{1,-1\}$. If $\pi(z)=I$, then
\eqref{eq:central-fourier-support} says
$\Fblock_\pi(f)=-\Fblock_\pi(f)$, hence the block is zero. By
Plancherel, at least one Fourier block is nonzero, because
$\mathbb E_g\|f(g)\|_{\HS}^2=d$. Thus there is at least one irreducible representation with $\pi(z)=-I$,
and every nonzero Fourier block comes from such a representation. These
representations have dimension at least $D_-$. \Cref{lem:cubic-fourier-support}
therefore gives
\[
 \operatorname{Re}\mathbb E_{g,h}
 \tr(f(gh)^*f(g)f(h))
 \le d\sqrt{\frac d{D_-}}.
\]
Finally, since both $f(gh)$ and $f(g)f(h)$ are unitary,
\begin{align*}
 \|f(gh)-f(g)f(h)\|_{\HS}^2
 &=\|f(gh)\|_{\HS}^2+\|f(g)f(h)\|_{\HS}^2\\
 &\quad-2\operatorname{Re}\tr
 \bigl(f(gh)^*f(g)f(h)\bigr)\\
 &=2d-2\operatorname{Re}\tr
 \bigl(f(gh)^*f(g)f(h)\bigr).
\end{align*}
Average and divide by $d$ to obtain \eqref{eq:central-character-lower}.
\end{proof}

\begin{lemma}[Error from reordering the words]
Let $q\ge2$, and let $X_1,\ldots,X_q\in U(d)$ be Hermitian involutions.
Put
\[
 e_{ij}=\|X_iX_j+X_jX_i\|_{2,d}\qquad(i<j).
\]
Define $f:H_q\to U(d)$ by
\[
 f(z^a x_1^{u_1}\cdots x_q^{u_q})
 =(-I_d)^aX_1^{u_1}\cdots X_q^{u_q}.
\]
If $N=\binom q2$, then
\begin{equation}\label{eq:normal-word-upper}
 \mathbb E_{g,h\in H_q}
 \|f(gh)-f(g)f(h)\|_{2,d}^2
 \le\frac N4\sum_{i<j}e_{ij}^2.
\end{equation}
Consequently,
\begin{equation}\label{eq:average-clifford-dimension}
 \frac1N\sum_{i<j}e_{ij}^2
 \ge\frac8{N^2}
 \left(1-\sqrt{\frac d{D_q}}\right).
\end{equation}
\end{lemma}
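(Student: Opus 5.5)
We compute $f(gh)-f(g)f(h)$ directly. Write $g=z^a x^u$ and $h=z^b x^v$ in normal form, with $x^u:=x_1^{u_1}\cdots x_q^{u_q}$. By the multiplication law \eqref{eq:clifford-extension-law},
\[
gh=z^{a+b+c(u,v)}x^{u+v},
\]
so
\[
f(gh)=(-1)^{a+b+c(u,v)}X^{u+v},\qquad f(g)f(h)=(-1)^{a+b}X^uX^v .
\]
The signs $(-1)^{a+b}$ cancel in the norm. Since every operator involved is unitary, the quantity to bound is
\[
\|X^uX^v-(-1)^{c(u,v)}X^{u+v}\|_{2,d}.
\]
This is independent of $a$ and $b$, so the average over $H_q\times H_q$ reduces to the average over $(u,v)\in\mathbb F_2^q\times\mathbb F_2^q$.

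Next, we bring $X^uX^v$ to normal order by adjacent transpositions, as in a bubble sort. Each factor $X_j$ with $v_j=1$ must move left past every $X_i$ with $u_i=1$ and $i>j$. The number of such swaps is exactly $c(u,v)=\sum_{i>j}u_iv_j$ modulo $2$, counted as an integer $\sum_{i>j}u_iv_j$. After sorting, repeated indices collapse because $X_i^2=I$. Each swap replaces $X_iX_j$ by $-X_jX_i$, and the error is $X_iX_j+X_jX_i$ multiplied on both sides by unitaries. By unitary invariance of $\|\cdot\|_{2,d}$, the swap costs exactly $e_{ij}$. The triangle inequality then gives
\[
\|f(gh)-f(g)f(h)\|_{2,d}\le\sum_{i>j}u_iv_j\,e_{ji}.
\]
Applying Cauchy--Schwarz over the $N$ pairs,
\[
\|f(gh)-f(g)f(h)\|_{2,d}^2\le N\sum_{i>j}u_iv_je_{ji}^2 .
\]
Averaging over uniform $(u,v)$, the quantity $u_iv_j$ has mean $1/4$ for $i\ne j$. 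This yields \eqref{eq:normal-word-upper}.

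For \eqref{eq:average-clifford-dimension}, we check that $f(zg)=-f(g)$ directly from the definition, and that $f$ takes values in $U(d)$. \Cref{prop:central-character-obstruction} applied with $G=H_q$ then gives a lower bound $2(1-\sqrt{d/D_-})$. By \Cref{lem:clifford-sector}, $D_-=D_q$. Combining with \eqref{eq:normal-word-upper},
\[
\frac N4\sum_{i<j}e_{ij}^2\ge 2\left(1-\sqrt{d/D_q}\right),
\]
and dividing by $N^2/4$ gives the claim. When $d\ge D_q$ the right-hand side is nonpositive and the inequality is trivial, so there is no sign issue.

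The main obstacle is the careful bookkeeping in the reordering step. We must verify that the parity of the swap count matches $c(u,v)$ exactly, and that the sign conventions $x_ix_j=zx_jx_i$ for $i<j$ agree with $X_iX_j\approx -X_jX_i$. Otherwise an extra sign would appear and the estimate would break.
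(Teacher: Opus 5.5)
Your proposal is correct and follows the paper's proof essentially step for step. The shared steps are: the reduction to comparing $X^uX^v$ with $(-1)^{c(u,v)}X^{u+v}$; the telescoping over the $\sum_{i>j}u_iv_j$ adjacent transpositions, each costing $e_{ji}$; Cauchy--Schwarz together with $\mathbb E(u_iv_j)=1/4$; and finally \Cref{prop:central-character-obstruction} combined with $D_-=D_q$ from \Cref{lem:clifford-sector}. The sign bookkeeping you flag as the main obstacle is already settled in your own first display: $gh=z^{a+b+c(u,v)}x^{u+v}$ follows directly from the group law, and the swap count satisfies $\sum_{i>j}u_iv_j\equiv c(u,v)\pmod 2$.
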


\begin{proof}
Write
\[
 g=z^a x_1^{u_1}\cdots x_q^{u_q},
 \qquad
 h=z^b x_1^{v_1}\cdots x_q^{v_q}.
\]
The central signs $(-I)^a$ and $(-I)^b$ are scalar unitaries and do not
affect any norm. Put
\[
 X^u=X_1^{u_1}\cdots X_q^{u_q},
 \qquad
 c(u,v)=\sum_{i>j}u_iv_j\pmod2.
\]
By the multiplication law in $H_q$,
\[
 f(gh)=(-1)^{a+b+c(u,v)}X^{u+v},
 \qquad
 f(g)f(h)=(-1)^{a+b}X^uX^v.
\]
Thus it suffices to compare $X^uX^v$ with
$(-1)^{c(u,v)}X^{u+v}$.

Start with the concatenated word
\[
 X_1^{u_1}\cdots X_q^{u_q}X_1^{v_1}\cdots X_q^{v_q}.
\]
To put it in increasing index order, a factor $X_j$ from the second word
must pass a factor $X_i$ from the first word precisely when
$i>j$, $u_i=1$, and $v_j=1$. Each ordered pair $(i,j)$ is used at most once.
At such a step the exact anticommutation replacement would be
$X_iX_j\mapsto-X_jX_i$, and the error of this replacement is
\[
 X_iX_j-(-X_jX_i)=X_iX_j+X_jX_i.
\]
All factors to the left and right of the replaced pair are unitary, so the
normalized Hilbert--Schmidt norm of the error is exactly $e_{ji}$.
After all interchanges, equal adjacent factors cancel exactly because
$X_i^2=I$. If $W_0,W_1,\ldots,W_L$ are the successive signed words, then
$W_0=X^uX^v$, $W_L=(-1)^{c(u,v)}X^{u+v}$, and telescoping gives
\begin{align}
 \|f(gh)-f(g)f(h)\|_{2,d}
 &=\|W_0-W_L\|_{2,d}\\
 &\le\sum_{t=1}^L\|W_{t-1}-W_t\|_{2,d}\\
 &\le\sum_{i>j}u_iv_j e_{ji}.
 \label{eq:normal-word-telescope}
\end{align}
There are at most $N=\binom q2$ possible pairs. Cauchy--Schwarz in the
finite sum gives
\begin{align*}
 \left(\sum_{i>j}u_iv_j e_{ji}\right)^2
 &\le\left(\sum_{i>j}u_iv_j\right)
      \left(\sum_{i>j}u_iv_j e_{ji}^2\right)\\
 &\le N\sum_{i>j}u_iv_j e_{ji}^2.
\end{align*}
Average over independent uniform $u,v\in\mathbb F_2^q$. For each fixed
$i>j$, the two bits $u_i$ and $v_j$ are independent Bernoulli variables of
mean $1/2$, so
\[
 \mathbb E_{u,v}(u_iv_j)=\frac14.
\]
Therefore \eqref{eq:normal-word-telescope} implies
\begin{align*}
 \mathbb E_{g,h}\|f(gh)-f(g)f(h)\|_{2,d}^2
 &\le N\sum_{i>j}\frac14e_{ji}^2
 =\frac N4\sum_{i<j}e_{ij}^2,
\end{align*}
which is \eqref{eq:normal-word-upper}.

The definition of $f$ gives $f(zg)=-f(g)$. \Cref{prop:central-character-obstruction}
and \Cref{lem:clifford-sector} therefore yield
\[
 \mathbb E_{g,h}\|f(gh)-f(g)f(h)\|_{2,d}^2
 \ge2\left(1-\sqrt{\frac d{D_q}}\right).
\]
Combining upper and lower estimates,
\[
 \frac N4\sum_{i<j}e_{ij}^2
 \ge2\left(1-\sqrt{\frac d{D_q}}\right).
\]
Multiplying by $4/N$ and then dividing by $N$ gives
\[
 \frac1N\sum_{i<j}e_{ij}^2
 \ge\frac8{N^2}
 \left(1-\sqrt{\frac d{D_q}}\right),
\]
which is \eqref{eq:average-clifford-dimension}.
\end{proof}

\begin{theorem}[Dimension bound for almost anticommuting matrices]
\label{thm:average-clifford-obstruction}
Let $q\ge2$, and let $X_1,\ldots,X_q\in U(d)$ be Hermitian
involutions. If $D_q=2^{\lfloor q/2\rfloor}\ge4d$, then
\begin{equation}
 \frac1{\binom q2}\sum_{i<j}
 \|X_iX_j+X_jX_i\|_{2,d}^2
 \ge\frac{4}{\binom q2^{\,2}}.
\end{equation}
\end{theorem}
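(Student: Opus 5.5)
This theorem should be an immediate specialization of the preceding lemma (Error from reordering the words), specifically of \eqref{eq:average-clifford-dimension}. I will apply that lemma to the given Hermitian involutions $X_1,\ldots,X_q\in U(d)$, using the group $H_q$ and the function $f(z^a x_1^{u_1}\cdots x_q^{u_q})=(-I_d)^aX_1^{u_1}\cdots X_q^{u_q}$. With $N=\binom q2$ and $e_{ij}=\|X_iX_j+X_jX_i\|_{2,d}$, this yields
\[
 \frac1N\sum_{i<j}e_{ij}^2\ge\frac8{N^2}\left(1-\sqrt{\frac d{D_q}}\right).
\]
The hypotheses of that lemma are exactly those of the theorem: $q\ge2$, and the $X_i$ are Hermitian unitary involutions. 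Behind it stand \Cref{prop:central-character-obstruction} and \Cref{lem:clifford-sector}. The former needs only $f(zg)=-f(g)$, which holds by construction. The latter identifies $D_-=D_q$ for the group $H_q$.

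It then remains to insert the dimension hypothesis. If $D_q\ge4d$, then $d/D_q\le1/4$, so $\sqrt{d/D_q}\le1/2$ and $1-\sqrt{d/D_q}\ge1/2$. Substituting gives
\[
 \frac1N\sum_{i<j}e_{ij}^2\ge\frac8{N^2}\cdot\frac12=\frac4{N^2}=\frac{4}{\binom q2^{\,2}},
\]
which is the claimed inequality.

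I expect no real obstacle here, since all the substantive work was done in the earlier lemmas. The only points to check are that $D_-$ in \Cref{prop:central-character-obstruction} is exactly $D_q$, and that the inequality direction survives the square root. Both are immediate: every irreducible representation with $\pi(z)=-I$ has dimension exactly $2^{\lfloor q/2\rfloor}$, and the map $t\mapsto1-\sqrt t$ is decreasing.
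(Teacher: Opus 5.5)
Your proposal is correct and is exactly the paper's argument: the dimension hypothesis $D_q\ge4d$ gives $1-\sqrt{d/D_q}\ge1/2$. Substituting this into \eqref{eq:average-clifford-dimension} yields the bound $4/\binom q2^{2}$.
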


\begin{proof}
Under the dimension hypothesis,
$1-\sqrt{d/D_q}\ge1/2$. Apply
\eqref{eq:average-clifford-dimension}.
\end{proof}

\subsection{A lower bound for the Hilbertian KSZ constant}

\begin{theorem}[Lower bound for large $r$]\label{thm:supercritical-gap}
There is a universal constant $c>0$ such that, for every $r,n\in\N$
with
\[
 r\ge q_n:=2\lceil\log_2(8n)\rceil,
\]
one has
\begin{equation}\label{eq:supercritical-gap}
 K_{r,n}-1\ge\frac{1}{75q_n^4}
 \ge\frac{c}{(1+\log_2n)^4}.
\end{equation}
Consequently, for every $\delta>0$ there is $n_\delta$ such that
\[
 r\ge(2+\delta)\log_2n,\qquad n\ge n_\delta
 \quad\Longrightarrow\quad
 K_{r,n}\ge1+\frac{c}{(1+\log_2n)^4}.
\]
\end{theorem}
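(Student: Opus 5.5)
We argue by contradiction. Let $A$ be a minimizing sign tensor in square format with $\|A\|=K_{r,n}\sqrt n=(1+\varepsilon)\sqrt n$, and suppose $\varepsilon<1/(75q_n^4)$; in particular $\varepsilon\le1$. Since $q_n\ge 2\lceil\log_2 8\rceil=6$, we have $r\ge6$. We apply \Cref{thm:average-extraction} with $q=q_n\le r$, and then \Cref{lem:average-polar}, to obtain a set $S$ of $q_n$ indices and orthogonal matrices $Q_i$ ($i\in S$) with
\[
 \frac1{\binom{q_n}2}\sum_{\substack{i<j\\ i,j\in S}}\|Q_iQ_j^T+Q_jQ_i^T\|_{2,n}^2\le 80\,\Theta(\varepsilon).
\]
Next we pass to the Hermitian dilations $X_i=\mathcal X(Q_i)\in O(2n)\subset U(2n)$. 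These are Hermitian involutions, and by \Cref{lem:hermitian-dilation} their normalized anticommutator norms coincide with those of the $Q_i$. So the average above, computed with $d=2n$, is still at most $80\Theta(\varepsilon)$.

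We now invoke \Cref{thm:average-clifford-obstruction} with $q=q_n$ and $d=2n$. Its dimension hypothesis holds because $q_n$ is even, so
\[
 D_{q_n}=2^{q_n/2}=2^{\lceil\log_2(8n)\rceil}\ge 8n=4d.
\]
The theorem then gives the average $\ge 4/\binom{q_n}2^2\ge 16/q_n^4$, using $\binom{q_n}{2}\le q_n^2/2$. Combining the two bounds yields $80\Theta(\varepsilon)\ge16/q_n^4$, that is, $\Theta(\varepsilon)\ge 1/(5q_n^4)$.

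For $\varepsilon\le1$ we have $\Theta(\varepsilon)=(1+\varepsilon)^4-1\le15\varepsilon$ by convexity (the chord from $0$ to $1$). Hence $\varepsilon\ge1/(75q_n^4)$, contradicting the assumption. (If we prefer a direct argument to contradiction: when $\varepsilon>1$ the bound holds trivially, and otherwise the chain above gives it.) This proves the first inequality in \eqref{eq:supercritical-gap}.

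For the second inequality, note $q_n\le 2(\log_2(8n)+1)=2(4+\log_2 n)\le 8(1+\log_2 n)$. So $1/(75q_n^4)\ge c/(1+\log_2n)^4$ with $c=1/(75\cdot 8^4)$.

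For the consequence, $(2+\delta)\log_2 n\ge 2\log_2 n+8\ge q_n$ once $\delta\log_2 n\ge 8$, because $q_n\le 2\log_2(8n)+2=2\log_2 n+8$. So the main statement applies for $n\ge n_\delta:=2^{8/\delta}$.

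The main obstacle is purely bookkeeping: choosing $q=q_n$ even so that $D_q=2^{q/2}$ exactly reaches $8n=4\cdot(2n)$, which absorbs the factor $2$ lost in the dilation. Beyond that, the averaged bounds from \Cref{thm:average-extraction} carry no $q$-factor, so the constants chain together; the only care needed is with $r\ge6$ and $\varepsilon\le1$.
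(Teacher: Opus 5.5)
Your proposal is correct and follows the paper's proof essentially step for step: an extremizer, slice selection with $q=q_n$, polar approximation, Hermitian dilation to $d=2n$ so that $D_{q_n}=2^{q_n/2}\ge 8n=4d$, the Clifford dimension bound, and $\Theta(\varepsilon)\le 15\varepsilon$ on $[0,1]$, with the same constants $1/(75q_n^4)$ and $c=(75\cdot 8^4)^{-1}$. The only cosmetic differences are that you phrase it as a contradiction and apply $\binom{q_n}{2}\le q_n^2/2$ before rather than after solving for $\varepsilon$; also, $n_\delta$ should be rounded up to an integer, as in the paper's $\lceil 2^{8/\delta}\rceil$.
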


\begin{proof}
Because the set of sign tensors of format $r\times n\times n$ is finite,
there is an extremizer $A$. Write
\[
 \|A\|=K_{r,n}\sqrt n=(1+\varepsilon)\sqrt n,
 \qquad \varepsilon=K_{r,n}-1\ge0.
\]
If $\varepsilon>1$, then the first lower bound in
\eqref{eq:supercritical-gap} is immediate, because $q_n\ge1$. Hence we may
assume $0\le\varepsilon\le1$.

Set
\[
 q=q_n=2\lceil\log_2(8n)\rceil,
 \qquad N=\binom q2.
\]
The hypothesis $r\ge q$ allows us to apply
\Cref{thm:average-extraction}. Since $n\ge1$ implies
$q\ge2\lceil3\rceil=6$, we also have $r\ge6$, so
\Cref{lem:average-polar} applies. We obtain orthogonal matrices
$Q_1,\ldots,Q_q\in O(n)$ satisfying
\begin{equation}
 \frac1N\sum_{i<j}
 \|Q_iQ_j^T+Q_jQ_i^T\|_{2,n}^2
 \le80\Theta(\varepsilon).
\end{equation}

For each $i$, put
\[
 X_i=\mathcal X(Q_i)=
 \begin{pmatrix}0&Q_i\\Q_i^T&0\end{pmatrix}\in O(2n).
\]
By \Cref{lem:hermitian-dilation}, the $X_i$ are Hermitian involutions
and
\begin{equation}
 \frac1N\sum_{i<j}\|X_iX_j+X_jX_i\|_{2,2n}^2
 \le80\Theta(\varepsilon).
 \label{eq:clifford-average-upper}
\end{equation}
Since $q$ is even,
\[
 D_q=2^{\lfloor q/2\rfloor}=2^{q/2}
 =2^{\lceil\log_2(8n)\rceil}\ge8n.
\]
The matrices $X_i$ have order $d=2n$, and therefore
\[
 D_q\ge8n=4d.
\]
\Cref{thm:average-clifford-obstruction} gives the lower bound
\begin{equation}
 \frac1N\sum_{i<j}\|X_iX_j+X_jX_i\|_{2,2n}^2
 \ge\frac4{N^2}.
 \label{eq:clifford-average-lower}
\end{equation}
Comparing \eqref{eq:clifford-average-upper} and
\eqref{eq:clifford-average-lower},
\[
 \frac4{N^2}\le80\Theta(\varepsilon),
 \qquad\text{hence}\qquad
 \Theta(\varepsilon)\ge\frac1{20N^2}.
\]
For $0\le\varepsilon\le1$,
\begin{align*}
 \Theta(\varepsilon)
 &=(1+\varepsilon)^4-1\\
 &=4\varepsilon+6\varepsilon^2+4\varepsilon^3+\varepsilon^4\\
 &\le(4+6+4+1)\varepsilon=15\varepsilon.
\end{align*}
Consequently,
\[
 \varepsilon\ge\frac1{300N^2}.
\]
Since $N=q(q-1)/2\le q^2/2$,
\[
 \frac1{300N^2}
 \ge\frac1{300(q^4/4)}=\frac1{75q^4}.
\]
This establishes the first inequality in \eqref{eq:supercritical-gap}. For the second,
\[
 q_n=2\lceil\log_2(8n)\rceil
 \le2(\log_2(8n)+1)=2\log_2n+8
 \le8(1+\log_2n),
\]
where the last inequality is valid for $n\ge1$. Hence
\[
 \frac{1}{75q_n^4}
 \ge\frac{1}{75\cdot8^4(1+\log_2n)^4}.
\]
Thus the second inequality holds with
$c=(75\cdot8^4)^{-1}$.

For the final assertion, it is enough that
\[
 \delta\log_2n\ge8,
\]
because then
\[
 (2+\delta)\log_2n\ge2\log_2n+8\ge q_n.
\]
Thus one may take, for instance, $n_\delta=\lceil 2^{8/\delta}\rceil$.
\end{proof}

\begin{remark}[On the exponent four]
We do not claim that the exponent $4$ is optimal. In the proof,
$N=\binom q2$ is of order $q^2$, and the estimate is of order $N^{-2}$.
Hence the argument gives $q^{-4}$ when $q$ is of order $\log_2 n$.
\end{remark}

\begin{remark}
The two estimates are different. The gap in \Cref{thm:square-gap} comes from
integrality and separates the exact minimum from every other value. The bound
in \Cref{thm:supercritical-gap} is used when the number of slices is large.
\end{remark}

\section{Proof of Main Theorem A(ii): the asymptotic construction}\label{sec:subcritical-construction}

Exact forms are first constructed in Hadamard dimensions. Choosing nearby
Hadamard orders and restricting the last variable then gives \hyperref[mainA]{Main Theorem~A}(ii).

\subsection{Orthogonal multiplications and exact constructions}\label{sec:orthogonal-multiplications}

We use the equality condition in the following form.

\begin{proposition}[Orthogonal-multiplication characterization]\label{prop:orthogonal-multiplication}
Let $r,m,n$ be positive integers with $m\le n$. Then $\Lambda(r,m,n)=\sqrt n$
if and only if there exist sign matrices $M_{1},\ldots,M_{r}\in
\{-1,1\}^{m\times n}$ satisfying
\[
M_{i}M_{j}^{T}+M_{j}M_{i}^{T}=2n\delta_{ij}I_{m}.
\]
Equivalently, the bilinear map
\[
\omega:\R^{r}\times\R^{m}\longrightarrow\R^{n},
\qquad\omega(x,y)=M(x)^{T}y,
\]
satisfies
\[
\|\omega(x,y)\|_{2}=\sqrt n\,\|x\|_{2}\|y\|_{2}
\]
and has sign coefficients in the canonical bases.
\end{proposition}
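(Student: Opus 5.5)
The first equivalence is exactly the equality clause of \Cref{thm:defect-lower}. We apply it to an extremizer. Since there are only finitely many sign tensors in $\{-1,1\}^{r\times m\times n}$, the minimum $\Lambda(r,m,n)$ is attained by some $A$ with slices $M_1,\ldots,M_r$.

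If $\Lambda(r,m,n)=\sqrt n$, this extremizer has $\|A\|=\sqrt n$. By \Cref{thm:defect-lower} its slices then satisfy \eqref{eq:exact-matrix-relations}. Conversely, suppose sign matrices satisfying \eqref{eq:exact-matrix-relations} exist. The same theorem gives $\|A\|=\sqrt n$ for the associated form. Combined with the universal lower bound $\Lambda(r,m,n)\ge\sqrt n$, this gives equality.

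For the reformulation in terms of $\omega$, we first note that the relations \eqref{eq:exact-matrix-relations} are equivalent to the polynomial identity $M(x)M(x)^T=n\|x\|_2^2I_m$ for all $x\in\R^r$. One direction is the computation at the end of the proof of \Cref{thm:defect-lower}. For the other, we compare the coefficients of $x_i^2$ and of $x_ix_j$ ($i<j$) in the quadratic matrix polynomial $\sum_{i,j}x_ix_jM_iM_j^T$. This is a polarization step: the symmetric parts $M_iM_j^T+M_jM_i^T$ are determined by the quadratic form.

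Next, $\|\omega(x,y)\|_2^2=\langle M(x)M(x)^Ty,y\rangle$. So the identity $\|\omega(x,y)\|_2^2=n\|x\|_2^2\|y\|_2^2$ for all $y$ says that the symmetric matrix $M(x)M(x)^T-n\|x\|_2^2I_m$ has vanishing quadratic form. Hence this matrix is zero, and the two conditions are equivalent for each fixed $x$.

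Finally, the coefficients of $\omega$ in the canonical bases are $\langle\omega(e_i,e_j),e_k\rangle=(M_i^Te_j)_k=\varepsilon_{ijk}$. So having sign coefficients is the same as the $M_i$ being sign matrices.

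No step is a real obstacle. The only points needing care are the polarization argument, which recovers the individual relations from the identity in $x$ using only symmetric combinations, and the observation that a symmetric matrix with zero quadratic form vanishes.
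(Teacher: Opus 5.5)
Your proof is correct and follows the paper's argument. You get the matrix characterization from the equality clause of the fourth-moment theorem, together with the universal lower bound; you then use $\|\omega(x,y)\|_2^2=\langle M(x)M(x)^Ty,y\rangle$, the vanishing of a symmetric matrix with zero quadratic form, and coefficient comparison in $x$, and your check that the canonical coefficients of $\omega$ are the $\varepsilon_{ijk}$ makes explicit a point the paper leaves implicit.
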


\begin{proof}
The matrix characterization follows from \Cref{sec:fourth-moment}. If the matrix relations
hold, then
\[
M(x)M(x)^{T}=n\|x\|_{2}^{2}I_{m},
\]
and consequently
\begin{align*}
\|\omega(x,y)\|_{2}^{2}  &  =\|M(x)^{T}y\|_{2}^{2}\\
&  =\langle M(x)M(x)^{T}y,y\rangle\\
&  =n\|x\|_{2}^{2}\|y\|_{2}^{2}.
\end{align*}
Conversely, if the norm identity holds for all $x$ and $y$, then
\[
y^{T}M(x)M(x)^{T}y=n\|x\|_{2}^{2}\|y\|_{2}^{2}
\]
for every $y$. Hence $M(x)M(x)^{T}=n\|x\|_{2}^{2}I_{m}$ for every $x$.
Comparing the coefficients of $x_{i}^{2}$ and $x_{i}x_{j}$ gives the stated
matrix relations.
\end{proof}

The classical norm-composition problem considers bilinear maps that preserve
a product of Euclidean norms; see \cite{Hurwitz,Shapiro}. Here we also require
sign coefficients in the canonical bases.

\begin{theorem}[Exact rectangular construction]
Let $r,m,n$ be positive integers with $m\le n$.
Suppose that a Hadamard matrix of order $n$ exists and that $r\le\rho(n)$.
Then
\[
\Lambda(r,m,n)=\sqrt n.
\]

\end{theorem}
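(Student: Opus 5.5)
The plan is to restrict the square construction of \Cref{thm:exact-square} to $m$ of the $n$ rows. Since $\Lambda(r,m,n)\ge\sqrt n$ by the universal lower bound, it suffices to exhibit sign matrices $M_1,\ldots,M_r\in\{-1,1\}^{m\times n}$ satisfying
\[
M_iM_j^T+M_jM_i^T=2n\delta_{ij}I_m .
\]
Once we have them, \Cref{prop:orthogonal-multiplication} (equivalently \Cref{thm:defect-lower}) gives $\Lambda(r,m,n)=\sqrt n$.

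The construction proceeds as follows. By hypothesis and \Cref{thm:exact-square}, (iii)$\Rightarrow$(ii), there are square sign matrices $\widetilde M_1,\ldots,\widetilde M_r\in\{-1,1\}^{n\times n}$ with
\[
\widetilde M_i\widetilde M_j^T+\widetilde M_j\widetilde M_i^T=2n\delta_{ij}I_n .
\]
Concretely, these are $\widetilde M_i=HE_i$, where $H$ is a Hadamard matrix and $E_1=I_n$, while $E_2,\ldots,E_r$ are integral Hurwitz--Radon matrices supplied by Geramita--Pullman. Let $P\in\R^{m\times n}$ be the coordinate projection onto the first $m$ coordinates, so that $PP^T=I_m$. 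Set $M_i=P\widetilde M_i$; this is simply the first $m$ rows of $\widetilde M_i$, hence still a sign matrix. Then
\[
M_iM_j^T+M_jM_i^T=P\bigl(\widetilde M_i\widetilde M_j^T+\widetilde M_j\widetilde M_i^T\bigr)P^T=2n\delta_{ij}PP^T=2n\delta_{ij}I_m ,
\]
which is exactly the required relation.

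An equivalent viewpoint is that the restriction of the square extremal form $\widetilde A$ to $\ell_2^r\times\ell_2^m\times\ell_2^n$, obtained by embedding $\ell_2^m$ into $\ell_2^n$ as the first $m$ coordinates, has norm at most $\|\widetilde A\|=\sqrt n$ and has sign coefficients. The lower bound $\sqrt n$ then finishes the argument. This avoids even invoking the matrix relations.

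There is no real obstacle. The only point to check is that the restriction is taken in the $y$-variable, the one of dimension $m\le n$, while the $z$-variable keeps dimension $n$, so that the normalizing lower bound remains $\sqrt n$ rather than $\sqrt m$.
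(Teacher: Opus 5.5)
Your proof is correct and is essentially the paper's own argument: take the square sign matrices $HE_i$ from the proof of the square classification and keep their first $m$ rows via $P_m$. The identity $P_mP_m^T=I_m$ transfers the relations, and the orthogonal-multiplication characterization together with the universal lower bound $\sqrt n$ then finishes the proof. Your alternative remark, restricting the extremal square form in the $y$-variable, is an equally valid shortcut.
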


\begin{proof}
The proof of the square classification provides sign matrices $S_{1}
,\ldots,S_{r}\in\{-1,1\}^{n\times n}$ satisfying
\[
S_{i}S_{j}^{T}+S_{j}S_{i}^{T}=2n\delta_{ij}I_{n}.
\]
Let $P_{m}:\R^{n}\to\R^{m}$ be the projection onto the first
$m$ coordinates and set
\[
M_{i}=P_{m}S_{i}.
\]
Then $M_{i}\in\{-1,1\}^{m\times n}$ and
\begin{align*}
M_{i}M_{j}^{T}+M_{j}M_{i}^{T}  &  =P_{m}(S_{i}S_{j}^{T}+S_{j}S_{i}^{T}
)P_{m}^{T}\\
&  =2n\delta_{ij}I_{m}.
\end{align*}
\Cref{prop:orthogonal-multiplication} completes the proof.
\end{proof}

\begin{remark}
When $m<n$, the converse existence problem asks which triples $(r,m,n)$ admit
sign matrices satisfying \eqref{eq:exact-matrix-relations}. In the square case, one matrix attaining
the lower bound is necessarily Hadamard and the remaining matrices reduce to a
Hurwitz--Radon family. This reduction is not available in general in the
rectangular case.
\end{remark}

\subsection{Hadamard orders near a target dimension}\label{sec:hadamard-approximation}

For the asymptotic result we need Hadamard orders close to a prescribed
number and with enough powers of two. The orders $4^u 12^v$ give the required
approximation, and an extra power of two controls the Hurwitz--Radon
function.

\begin{lemma}[An elementary approximation lemma]\label{lem:semigroup-gaps}
Let $\alpha,\beta>0$ and suppose $\alpha/\beta\notin\mathbb{Q}$. Put
\[
\mathcal{S}=\{u\alpha+v\beta:u,v\in\N\cup\{0\}\}.
\]
Then
\[
\inf\{s-x:s\in\mathcal{S},\ s\ge x\}\longrightarrow0 \qquad(x\to\infty).
\]

\end{lemma}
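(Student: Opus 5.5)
The plan is to use the classical density of $\{u\alpha - v\beta\}$ modulo an irrational ratio, in a form adapted to nonnegative coefficients. Fix $\eta>0$. It suffices to find a finite set $F\subset\mathcal S$ and a threshold $x_0$ such that every $x\ge x_0$ has some $s\in\mathcal S$ with $x\le s<x+\eta$. By rescaling we may assume $\beta=1$, so $\alpha$ is irrational.

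\textbf{Step 1 (small positive gaps).} Because $\alpha$ is irrational, the fractional parts $\{u\alpha\}$, $u\in\N$, are dense in $[0,1)$ (Kronecker, or Dirichlet's pigeonhole argument). Choose $u_0\ge1$ with $0<1-\{u_0\alpha\}<\eta$. Set $v_0=\lceil u_0\alpha\rceil$. Then
\[
 0<v_0-u_0\alpha=:\gamma<\eta,
\]
so two elements of $\mathcal S$, namely $u_0\alpha$ and $v_0$, differ by less than $\eta$. Symmetrically, choose $u_1$ with $0<\{u_1\alpha\}<\eta$ if needed.

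\textbf{Step 2 (a long arithmetic chain in $\mathcal S$).} Let $K=\lceil 1/\gamma\rceil$. For $k=0,1,\dots,K$, consider
\[
 s_k(t)=t+(K-k)u_0\alpha+kv_0=t+Ku_0\alpha+k\gamma ,\qquad t\in\N\cup\{0\}.
\]
Each $s_k(t)$ lies in $\mathcal S$ because all coefficients are nonnegative integers. For fixed $t$, the points $s_0(t)<s_1(t)<\dots<s_K(t)$ have consecutive spacing $\gamma<\eta$ and span an interval of length $K\gamma\ge1$. Letting $t$ range over $\N\cup\{0\}$, the union of these chains therefore covers $[Ku_0\alpha,\infty)$ with mesh less than $\eta$. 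Here we use $\beta=1$: shifting by integers fills the gaps of length $1$.

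\textbf{Step 3 (conclusion).} For $x\ge x_0:=Ku_0\alpha$, pick $t=\lfloor x-x_0\rfloor$. The chain starting at $x_0+t\le x$ reaches beyond $x$, so some $s_k(t)$ lies in $[x,x+\eta)$. Hence the infimum is less than $\eta$ for $x\ge x_0$. Since $\eta$ is arbitrary, the limit is $0$.

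The only substantive point is Step 1, the density of $\{u\alpha\}$ for irrational $\alpha$, which is classical. Everything else is bookkeeping that keeps the coefficients nonnegative. That is why we use $v_0-u_0\alpha>0$ rather than a negative combination.
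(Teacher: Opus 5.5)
Your proof is correct, and it differs from the paper's in how the finite net of residues is produced. The paper uses density of $u\alpha \bmod \beta$ to cover the circle $\R/\beta\mathbb{Z}$ by the open arcs $I_u$. It then extracts a finite subcover $I_{u_1},\ldots,I_{u_N}$ by compactness, and for $x\ge\max_j u_j\alpha$ adds a nonnegative multiple of $\beta$ to a suitable $u_j\alpha$.

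You use density only once, to get a single positive difference $\gamma=v_0-u_0\alpha<\eta$ between two elements of $\mathcal S$. Trading copies of $u_0\alpha$ for copies of $v_0$ then gives an explicit progression $Ku_0\alpha+k\gamma$ ($0\le k\le K$) inside $\mathcal S$. Since its length $K\gamma\ge1$ covers a full period, translating by integers finishes the argument as in the paper.

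The common skeleton is the same: a finite set of elements of $\mathcal S$ whose residues modulo $\beta$ are $\eta$-dense, followed by translation by nonnegative multiples of $\beta$. Your route avoids the compactness step and is effective. It gives the explicit threshold $x_0=\lceil 1/\gamma\rceil u_0\alpha$ and mesh $\gamma$, and it needs only one $u_0\ge1$ whose fractional part of $u_0\alpha$ lies in $(1-\eta,1)$, not a cover of the whole circle. The aside about choosing $u_1$ with small positive fractional part is never used and can be dropped.
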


\begin{proof}
Fix $\varepsilon>0$, with $\varepsilon<\beta$. The residues $\{u\alpha
\pmod\beta:u\ge0\}$ are dense in $\R/\beta\mathbb{Z}$. For each $u\ge0$,
let $I_u$ be the set of residues $t$ from which the positively oriented
distance to $u\alpha$ modulo $\beta$ is smaller than $\varepsilon$. Density
implies that the open arcs $I_u$ cover $\R/\beta\mathbb{Z}$; compactness gives
a finite subcover $I_{u_1},\ldots,I_{u_N}$. Thus, for every residue $t$, some
$u_j\alpha$ lies at positively oriented distance in $[0,\varepsilon)$ from
$t$ modulo $\beta$.

Let $X=\max_{j} u_{j}\alpha$ and take $x\ge X$. Choose $j$ and $\delta
\in[0,\varepsilon)$ such that
\[
u_{j}\alpha-x\equiv\delta\pmod\beta.
\]
Thus $u_{j}\alpha-x=\delta-q\beta$ for some integer $q$. Since $u_{j}\alpha\le
x$ and $\delta<\beta$, one has $q\ge0$. Therefore
\[
u_{j}\alpha+q\beta=x+\delta\in[x,x+\varepsilon),
\]
and the left-hand side belongs to $\mathcal{S}$. Hence every sufficiently
large interval $[x,x+\varepsilon)$ meets $\mathcal{S}$.
\end{proof}

\begin{proposition}[Hadamard orders close to a given size]\label{prop:hadamard-relative-approximation}
Let $(x_{n})$ be any sequence of positive real numbers with $x_{n}\to\infty$.
Then there are Hadamard orders $h_{n}\ge x_{n}$ such that
\[
\frac{h_{n}}{x_{n}}\longrightarrow1.
\]

\end{proposition}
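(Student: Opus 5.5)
The plan is to apply \Cref{lem:semigroup-gaps} on a logarithmic scale. Hadamard matrices exist in orders $4$ and $12$, and Kronecker products of Hadamard matrices are Hadamard. Hence every integer $4^u12^v$ with $u,v\in\N\cup\{0\}$ is a Hadamard order. Taking logarithms, these orders correspond to the additive semigroup
\[
\mathcal S=\{u\alpha+v\beta:u,v\in\N\cup\{0\}\},\qquad \alpha=\log 4,\quad \beta=\log 12 .
\]

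The first step is to check the irrationality hypothesis, namely $\alpha/\beta\notin\mathbb Q$. Suppose $a\log4=b\log12$ with positive integers $a,b$. Then $4^a=12^b$, so $2^{2a}=2^{2b}3^b$. The left side has no factor $3$ while the right side has $3^b$ with $b\ge1$, which is impossible by unique factorization. So the ratio is irrational and \Cref{lem:semigroup-gaps} applies.

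The second step produces the orders. Put $y_n=\log x_n$, so $y_n\to\infty$. For each $n$ choose $s_n\in\mathcal S$ with $s_n\ge y_n$ and $s_n-y_n\le \eta_n+1/n$, where
\[
\eta_n=\inf\{s-y_n:s\in\mathcal S,\ s\ge y_n\}.
\]
This infimum is over a nonempty set, since $\mathcal S$ is unbounded above. The lemma gives $\eta_n\to0$.

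The third step converts back. Write $s_n=u_n\alpha+v_n\beta$ and set $h_n=4^{u_n}12^{v_n}=e^{s_n}$. Then $h_n$ is a Hadamard order and $h_n\ge x_n$. Moreover
\[
1\le \frac{h_n}{x_n}=e^{s_n-y_n}\le e^{\eta_n+1/n}\longrightarrow 1 .
\]
For the finitely many $n$ where $x_n$ is small there is no issue, because the lemma only matters asymptotically; any choice from the nonempty set works for those indices.

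There is no real obstacle here; the only substantive input is the irrationality of $\log4/\log12$. For the later use (controlling $\rho$), I would note that one can multiply $h_n$ by a fixed power of $2$ times a correction, but that belongs to the next step, not this proposition.
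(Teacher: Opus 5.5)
Your proof is correct and follows the paper's argument: the Hadamard orders $4^u12^v$, the semigroup lemma with $\alpha=\log 4$ and $\beta=\log 12$, irrationality of $\alpha/\beta$ by unique factorization, and exponentiating back. The only difference is cosmetic: you take an $s_n$ within $\eta_n+1/n$ of $\log x_n$, whereas the paper takes the least element of $\{4^u12^v\}$ that is at least $x_n$. That least element exists because the set has only finitely many elements below any bound, so your slack is harmless but not needed.
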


\begin{proof}
Every element of
\[
\mathcal{H}=\{4^{u}12^{v}:u,v\in\N\cup\{0\}\}
\]
is a Hadamard order. The logarithms of the elements of $\mathcal{H}$ form the
additive semigroup in \Cref{lem:semigroup-gaps} with $\alpha=\log4$ and $\beta
=\log12$. The quotient $\alpha/\beta$ is irrational: otherwise $\log_{2} 3$
would be rational, which would imply $2^{a}=3^{b}$ for some positive integers
$a,b$.

The set $\mathcal{H}$ has only finitely many elements below each fixed bound,
so the least $h_{n}\in\mathcal{H}$ with $h_{n}\ge x_{n}$ is well defined. The
lemma gives
\[
\log h_{n}-\log x_{n}\longrightarrow0,
\]
and hence $h_{n}/x_{n}\to1$.
\end{proof}

\begin{proposition}[Hadamard orders with a Hurwitz--Radon bound]\label{prop:hadamard-rho-approximation}
Let $(a_{n})$ be a sequence of nonnegative integers such that
\[
\frac{n}{2^{a_{n}}}\longrightarrow\infty.
\]
Then there are Hadamard orders $t_{n}\ge n$ satisfying
\[
\frac{t_{n}}{n}\longrightarrow1 \qquad\text{and}\qquad\rho(t_{n})\ge2a_{n}.
\]

\end{proposition}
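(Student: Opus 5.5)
Set $t_n=2^{a_n+4}h_n$, where $h_n$ is a Hadamard order close to $n/2^{a_n+4}$. The factor $2^{a_n+4}$ controls the Hurwitz--Radon function, and $h_n$ gives the approximation. Put
\[
 x_n:=\frac{n}{2^{a_n+4}}.
\]
By hypothesis, $x_n=\frac1{16}\,n/2^{a_n}\to\infty$. \Cref{prop:hadamard-relative-approximation} therefore supplies Hadamard orders $h_n\ge x_n$ with $h_n/x_n\to1$. Define
\[
 t_n:=2^{a_n+4}h_n .
\]
Then $t_n\ge 2^{a_n+4}x_n=n$ and $t_n/n=h_n/x_n\to1$.

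Next we check that $t_n$ is a Hadamard order. Write $2^{a_n+4}=2\cdot 2^{a_n+3}$. A Sylvester Hadamard matrix $H_2^{\otimes(a_n+4)}$ has order $2^{a_n+4}$. Its Kronecker product with a Hadamard matrix of order $h_n$ is Hadamard of order $t_n$, by the tensor-product remark in the preliminaries.

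For the Hurwitz--Radon bound, apply \eqref{eq:rho-valuation-bound}:
\[
 \rho(t_n)\ge 2\nu_2(t_n)\ge 2(a_n+4)\ge 2a_n,
\]
since $\nu_2(t_n)=a_n+4+\nu_2(h_n)\ge a_n+4$. The extra factor $2^4$ is not needed for this inequality; any factor $2^{a_n}$ already suffices. I will nonetheless keep the margin, or equally use $2^{a_n}$ directly, because then $x_n=n/2^{a_n}\to\infty$ by hypothesis and the argument is the same.

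The only real point is ensuring $x_n\to\infty$, so that \Cref{prop:hadamard-relative-approximation} applies. This is exactly the hypothesis. Everything else is bookkeeping with $\nu_2$, so I do not expect a genuine obstacle. The one detail to handle is that \Cref{prop:hadamard-relative-approximation} needs $x_n$ positive real, which holds.
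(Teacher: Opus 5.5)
Your proof is correct and is essentially the paper's argument: the paper takes $x_n=n/2^{a_n}$, obtains $h_n$ from \Cref{prop:hadamard-relative-approximation}, sets $t_n=2^{a_n}h_n$, and concludes with $\rho(t_n)\ge2\nu_2(t_n)\ge2a_n$ via \eqref{eq:rho-valuation-bound}. The extra factor $2^4$ in your version is harmless but unnecessary, as you note yourself.
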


\begin{proof}
Put $x_{n}=n/2^{a_{n}}$ and choose $h_{n}$ as in
\Cref{prop:hadamard-relative-approximation}.
Since powers of two are Hadamard orders, so is
\[
t_{n}=2^{a_{n}}h_{n}.
\]
Moreover,
\[
t_{n}\ge n, \qquad\frac{t_{n}}{n}=\frac{h_{n}}{x_{n}}\longrightarrow1.
\]
Finally, by \eqref{eq:rho-valuation-bound},
\[
\rho(t_{n})\ge2\nu_{2}(t_{n})\ge2a_{n}.
\]

\end{proof}

\begin{corollary}
For every $0<c<2$, there is a sequence of Hadamard orders $(t_{n})$ such that
\[
n\le t_{n}, \qquad\frac{t_{n}}{n}\longrightarrow1, \qquad\rho(t_{n})\ge
c\log_{2}n
\]
for all sufficiently large $n$.
\end{corollary}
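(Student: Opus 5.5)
The corollary follows from \Cref{prop:hadamard-rho-approximation} with a suitable choice of the exponents $a_n$. Fix $0<c<2$ and, for $n\ge2$, set
\[
a_n:=\left\lceil \tfrac c2\log_2 n\right\rceil ,
\]
with $a_1:=0$. Then $2a_n\ge c\log_2 n$ automatically. The remaining work is to check the hypothesis of \Cref{prop:hadamard-rho-approximation}, namely $n/2^{a_n}\to\infty$, and then read off the conclusion.

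For the hypothesis, note that $a_n\le \tfrac c2\log_2 n+1$. Hence
\[
\frac{n}{2^{a_n}}\ge \frac{n}{2\,n^{c/2}}=\tfrac12\,n^{1-c/2}.
\]
Since $c<2$, the exponent $1-c/2$ is positive, so $n/2^{a_n}\to\infty$.

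\Cref{prop:hadamard-rho-approximation} then supplies Hadamard orders $t_n\ge n$ with $t_n/n\to1$ and $\rho(t_n)\ge 2a_n$. Combined with $2a_n\ge c\log_2 n$, this gives $\rho(t_n)\ge c\log_2 n$. In fact the inequality holds for every $n\ge2$, which is more than the stated ``for all sufficiently large $n$.''

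The only step needing any care is the choice of $a_n$. The ceiling has to be taken on $\tfrac c2\log_2 n$, not on $c\log_2 n$, so that $2a_n$ still dominates $c\log_2 n$. At the same time, the strict inequality $c<2$ is exactly what leaves the positive margin $n^{1-c/2}$ needed for $n/2^{a_n}\to\infty$.
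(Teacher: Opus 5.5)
Your proof is correct and is the paper's argument: the same choice $a_n=\lceil \tfrac c2\log_2 n\rceil$, the same bound $n/2^{a_n}\ge\tfrac12 n^{1-c/2}\to\infty$, and the same appeal to \Cref{prop:hadamard-rho-approximation}, which yields $\rho(t_n)\ge 2a_n\ge c\log_2 n$.
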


\begin{proof}
Take
\[
a_{n}=\left\lceil \frac c2\log_{2}n\right\rceil .
\]
Since $a_{n}\le\frac c2\log_{2}n+1$,
\[
\frac{n}{2^{a_{n}}} \ge\frac12n^{1-c/2}\longrightarrow\infty.
\]
\Cref{prop:hadamard-rho-approximation} gives
$\rho(t_{n})\ge2a_{n}\ge c\log_{2}n$.
\end{proof}

\subsection{The asymptotic construction}

The exact construction can now be combined with the approximation of Hadamard
orders.

\begin{theorem}[Asymptotic construction below $2\log_2 n$]\label{thm:subcritical-asymptotic}
Let $(r_{n})$ and $(m_{n})$ be sequences of positive integers such that
\[
1\le m_{n}\le n
\]
and
\begin{equation}
\limsup_{n\to\infty}\frac{r_{n}}{\log_{2}n}<2. \label{eq:trilinear-subcritical-condition}
\end{equation}
Then
\[
\Lambda(r_{n},m_{n},n)=(1+o(1))\sqrt n.
\]
More precisely, there exist unimodular trilinear forms
\[
A_{n}:\ell_{2}^{r_{n}}\times\ell_{2}^{m_{n}}\times\ell_{2}^{n}\longrightarrow
\R
\]
such that
\[
\sqrt n\le\|A_{n}\|\le(1+o(1))\sqrt n.
\]

\end{theorem}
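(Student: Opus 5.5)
The plan is to build exact extremizers in a slightly larger Hadamard dimension $t_n$ and then restrict the last variable to $\ell_2^n$.

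Fix $c$ with $\limsup r_n/\log_2 n<c<2$. Then $r_n\le c\log_2 n$ for all large $n$. By the corollary following \Cref{prop:hadamard-rho-approximation}, there are Hadamard orders $t_n\ge n$ with $t_n/n\to1$ and $\rho(t_n)\ge c\log_2 n\ge r_n$ for large $n$. Since $m_n\le n\le t_n$, the exact rectangular construction gives sign matrices $S_1,\ldots,S_{r_n}\in\{-1,1\}^{m_n\times t_n}$ with
\[
S_iS_j^T+S_jS_i^T=2t_n\delta_{ij}I_{m_n}.
\]
Concretely, these are the first $m_n$ rows of $HE_i$, where $H$ is a Hadamard matrix and the $E_i$ are integral Hurwitz--Radon matrices. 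Hence the associated $S(x)=\sum_i x_iS_i$ satisfies $\|S(x)\|_{\op}=\sqrt{t_n}\,\|x\|_2$.

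Next, let $M_i$ be the $m_n\times n$ submatrix formed by the first $n$ columns of $S_i$. This is still a sign matrix, and $M(x)=S(x)J$, where $J:\R^n\to\R^{t_n}$ is the coordinate inclusion with $\|J\|_{\op}=1$. Therefore $\|M(x)\|_{\op}\le\sqrt{t_n}\|x\|_2$. By \eqref{eq:trilinear-norm-representation}, the form $A_n$ with slices $M_i$ satisfies $\|A_n\|\le\sqrt{t_n}=(1+o(1))\sqrt n$. The lower bound $\|A_n\|\ge\sqrt n$ is the universal Hilbertian lower bound, so $\Lambda(r_n,m_n,n)=(1+o(1))\sqrt n$.

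For the finitely many small $n$ where $r_n\le c\log_2 n$ fails or the corollary does not yet apply, any sign tensor will do, since the statement is asymptotic. No step is a serious obstacle, because the arithmetic approximation of Hadamard orders with large $2$-adic valuation is already done in \Cref{prop:hadamard-rho-approximation}. The only point needing care is that restricting columns, i.e.\ the $z$-variable, can only decrease the operator norm, while restricting rows preserves the exact relations. The two restrictions are therefore applied in that order, rows first to get exactness in dimension $t_n$, then columns, accepting the loss $\sqrt{t_n/n}\to1$.
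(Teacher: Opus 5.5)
Your proposal is correct and follows essentially the same route as the paper: exact sign extremizers in a nearby Hadamard order $t_n\ge n$ with $\rho(t_n)\ge r_n$, followed by restriction of the $z$-variable to $\ell_2^n$, losing only the factor $\sqrt{t_n/n}\to1$. The only difference is cosmetic. You obtain $t_n$ from the corollary with $a_n=\lceil (c/2)\log_2 n\rceil$, whereas the paper applies \Cref{prop:hadamard-rho-approximation} directly with $a_n=\lceil r_n/2\rceil$.
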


\begin{proof}
Choose $c$ such that
\[
\limsup_{n\to\infty}\frac{r_{n}}{\log_{2}n}<c<2.
\]
Then $r_{n}\le c\log_{2}n$ for all sufficiently large $n$. Set
\[
a_{n}=\left\lceil \frac{r_{n}}{2}\right\rceil .
\]
Since $a_{n}\le r_{n}/2+1$, eventually
\[
\frac{n}{2^{a_{n}}} \ge\frac12n^{1-c/2}\longrightarrow\infty.
\]
\Cref{prop:hadamard-rho-approximation} therefore gives Hadamard orders
$t_{n}\ge n$ such
that
\[
\frac{t_{n}}{n}\longrightarrow1 \qquad\text{and}\qquad\rho(t_{n})\ge2a_{n}\ge
r_{n}.
\]
The rectangular construction provides sign matrices
\[
S_{1}^{(n)},\ldots,S_{r_{n}}^{(n)}\in\{-1,1\}^{m_{n}\times t_{n}}
\]
such that, for every $x\in\R^{r_{n}}$,
\[
\left\|  \sum_{i=1}^{r_{n}}x_{i}S_{i}^{(n)}\right\|  _{\op}
=\sqrt{t_{n}}\,\|x\|_{2}.
\]
Let $J_{n}:\R^{n}\to\R^{t_{n}}$ be the canonical coordinate
embedding and define
\[
M_{i}^{(n)}=S_{i}^{(n)}J_{n}.
\]
These are $m_{n}\times n$ sign matrices. For every $x$,
\begin{align*}
\left\|  \sum_{i=1}^{r_{n}}x_{i}M_{i}^{(n)}\right\|  _{\op}  &
=\left\|  \left(  \sum_{i=1}^{r_{n}}x_{i}S_{i}^{(n)}\right)  J_{n}\right\|
_{\op}\\
&  \le\left\|  \sum_{i=1}^{r_{n}}x_{i}S_{i}^{(n)}\right\|  _{\op}
\|J_{n}\|_{\op}\\
&  =\sqrt{t_{n}}\,\|x\|_{2},
\end{align*}
because $J_{n}$ is an isometry. The corresponding form $A_{n}$ therefore
satisfies
\[
\|A_{n}\|\le\sqrt{t_{n}}.
\]
The universal lower bound gives $\|A_{n}\|\ge\sqrt n$. Hence
\[
1\le\frac{\|A_{n}\|}{\sqrt n} \le\sqrt{\frac{t_{n}}{n}}\longrightarrow1.
\]

\end{proof}

\begin{corollary}
Under the assumptions of the theorem, for every $\varepsilon>0$ there is
$N_{\varepsilon}$ such that for every $n\ge N_{\varepsilon}$ there exists a
trilinear form with sign coefficients and
\[
\|A_{n}\|\le(1+\varepsilon) \max\{\sqrt{r_{n}},\sqrt{m_{n}},\sqrt n\}
=(1+\varepsilon)\sqrt n.
\]
The constant $1$ is optimal.
\end{corollary}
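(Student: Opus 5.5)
The corollary is a restatement of \Cref{thm:subcritical-asymptotic}, with the maximum of the three square roots identified. The optimality claim then comes from the universal lower bound.

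First, given $\varepsilon>0$, I apply \Cref{thm:subcritical-asymptotic} to the sequences $(r_n)$ and $(m_n)$. This produces unimodular forms $A_n$ with $\sqrt n\le\|A_n\|\le\sqrt{t_n}$, where $t_n/n\to1$. Since $\sqrt{t_n/n}\to1$, there is $N_\varepsilon$ with $\sqrt{t_n/n}\le1+\varepsilon$ for all $n\ge N_\varepsilon$. This gives $\|A_n\|\le(1+\varepsilon)\sqrt n$.

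Second, I check the identity $\max\{\sqrt{r_n},\sqrt{m_n},\sqrt n\}=\sqrt n$. The inequality $m_n\le n$ is part of the hypotheses. For $r_n$, condition \eqref{eq:trilinear-subcritical-condition} gives $r_n\le c\log_2 n$ eventually, with $c<2$. Since $2\log_2 n<n$ for large $n$, we get $r_n\le n$ for all large $n$. Enlarging $N_\varepsilon$ if necessary makes this hold for every $n\ge N_\varepsilon$, and then the displayed equality is valid.

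Third, for optimality of the constant $1$, I use the coordinate lower bound \eqref{eq:coordinate-lower-bound-intro}, i.e.\ the Universal Hilbertian lower bound proposition. Every sign form on $\ell_2^{r_n}\times\ell_2^{m_n}\times\ell_2^n$ satisfies $\|A\|\ge\sqrt n=\max\{\sqrt{r_n},\sqrt{m_n},\sqrt n\}$. Hence no constant smaller than $1$ can replace $1+\varepsilon$ in the estimate.

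There is no real obstacle in this argument. The only point needing care is making $N_\varepsilon$ large enough that both the ratio estimate and the inequality $r_n\le n$ hold from $N_\varepsilon$ onward.
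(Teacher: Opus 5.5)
Your proposal is correct and follows the paper's proof essentially step for step. You apply \Cref{thm:subcritical-asymptotic} for the upper bound, use the limsup condition together with $m_n\le n$ to get $r_n\le n$ eventually (so the maximum equals $\sqrt n$), and invoke the universal lower bound $\|A\|\ge\sqrt n$ for optimality of the constant $1$.
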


\begin{proof}
Condition \eqref{eq:trilinear-subcritical-condition} implies $r_{n}\le n$ for all sufficiently large $n$,
while $m_{n}\le n$ by assumption. The estimate follows from the theorem, and
the lower bound $\sqrt n$ proves optimality.
\end{proof}

\begin{remark}
The number $2$ in \eqref{eq:trilinear-subcritical-condition} comes from the size of the exact
Hurwitz--Radon families. For powers of two,
\[
\rho(2^{a})=2a+O(1)=2\log_{2}(2^{a})+O(1).
\]
Thus exact Hurwitz--Radon families allow $r_n$ to have order
$2\log_{2}n$. The present methods do not determine whether
$K_{r_n,n}=1+o(1)$ can hold for larger values of $r_n$.
\end{remark}

\begin{corollary}
For every fixed $0<c<2$ and every sequence $1\le m_{n}\le n$,
\[
\Lambda\bigl(\max\{1,\left\lfloor c\log_{2}n\right\rfloor \},m_{n},n\bigr)
=(1+o(1))\sqrt n.
\]

\end{corollary}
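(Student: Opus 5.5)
The plan is to obtain this corollary directly from \Cref{thm:subcritical-asymptotic}. We apply it to the sequence
\[
r_n:=\max\{1,\lfloor c\log_2 n\rfloor\},
\]
together with the given sequence $(m_n)$. The hypotheses on $(m_n)$, namely $1\le m_n\le n$ and that each $m_n$ is a positive integer, are already part of the statement. Likewise, $r_n$ is a positive integer by construction; the maximum with $1$ is there precisely to rule out $r_n=0$ for small $n$, since $\log_2 n$ may be less than $1/c$.

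The only hypothesis that needs checking is the subcritical condition \eqref{eq:trilinear-subcritical-condition}. For $n\ge 2^{1/c}$ we have $c\log_2 n\ge1$, so there $r_n=\lfloor c\log_2 n\rfloor\le c\log_2 n$. Hence
\[
\limsup_{n\to\infty}\frac{r_n}{\log_2 n}\le c<2 .
\]
The theorem then gives $\Lambda(r_n,m_n,n)=(1+o(1))\sqrt n$. This is exactly the claim, because the universal lower bound $\Lambda\ge\sqrt n$ is already contained in the theorem's two-sided estimate $\sqrt n\le\|A_n\|\le(1+o(1))\sqrt n$.

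There is no substantial obstacle; the only point needing care is the bookkeeping for small $n$. Statements with $o(1)$ concern only the tail, so the finitely many $n$ with $c\log_2 n<1$ (together with $n=1$, where $\log_2 n=0$ and the limsup quotient is undefined) can be discarded. Following the paper's convention, asymptotic statements involving $\log_2 n$ are read for $n\ge2$.
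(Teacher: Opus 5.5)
Your proposal is correct and is exactly the intended derivation: the corollary is \Cref{thm:subcritical-asymptotic} specialized to $r_n=\max\{1,\lfloor c\log_2 n\rfloor\}$, for which $\limsup_{n\to\infty} r_n/\log_2 n\le c<2$. Your handling of small $n$ also matches the paper's convention that asymptotic statements involving $\log_2 n$ are read for $n\ge2$; the paper states the corollary without a separate proof for this reason.
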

\begin{remark}
The number $2$ describes the asymptotically maximal Hurwitz--Radon scale for
exact equality in the square case (attained, for instance, along powers of
two), but condition \eqref{eq:trilinear-subcritical-condition} is not
necessary for every rectangular format. If $H$ is a
Hadamard matrix of order $n$ and its rows are regarded as $1\times n$ matrices
$M_1,\ldots,M_n$, then
\[
M_iM_j^T+M_jM_i^T=2n\delta_{ij}.
\]
Hence $\Lambda(n,1,n)=\sqrt n$. The logarithmic restriction in the theorem is
therefore a feature of the Hurwitz--Radon construction used for general
$m_n$, not a general obstruction for rectangular equality.
\end{remark}

\medskip
\begin{proof}[Proof of \Cref{mainA}]
For part~\emph{(i)}, \Cref{thm:exact-square} gives
\[
 K_{r,n}=1
 \quad\Longleftrightarrow\quad
 \text{a Hadamard matrix of order $n$ exists and }r\le\rho(n),
\]
and \Cref{thm:square-gap} gives, for every square sign form with $\|A\|>\sqrt n$,
\[
 \frac{\|A\|}{\sqrt n}
 \ge \left(1+\frac{2}{n^3}\right)^{1/4}.
\]

Part~\emph{(ii)} is exactly \Cref{thm:subcritical-asymptotic}, after dividing
its estimate by $\sqrt n$.

Part~\emph{(iii)} is \Cref{thm:supercritical-gap}. The constant appearing there
is universal, and its second inequality gives the stated form with
$(1+\log_2 n)^{-4}$. The three assertions follow.
\end{proof}

\section{Proof of Main Theorem B}\label{sec:higher-order-defect}

We prove \hyperref[thm:intro-higher-order-main]{Main Theorem~B} by keeping the first variables separate, which
preserves the orthogonal-multiplication structure of the trilinear case. We
first record a quadrilinear extremizer and then integrate the fourth-moment
estimate over a product of spheres.

For $k\ge3$ and positive integers $d_{1},\ldots,d_{k}$, a \emph{sign
$k$-tensor} is an element
\[
\mathcal{E}\in\{-1,1\}^{d_{1}\times\cdots\times d_{k}}.
\]
It is identified with its associated real unimodular $k$-linear form on
$\ell_{2}^{d_{1}}\times\cdots\times\ell_{2}^{d_{k}}$, and its spectral norm is
the norm of that form. Let $\Lambda_{k}(d_{1},\ldots,d_{k})$ denote the least
spectral norm of a sign $k$-tensor of this format. Fixing every variable
except the $j$th one at coordinate vectors gives
\begin{equation}
\Lambda_{k}(d_{1},\ldots,d_{k})\ge\max_{1\le j\le k}\sqrt{d_{j}}. \label{eq:universal-tensor-lower-bound}
\end{equation}

\begin{example}[A concrete quadrilinear extremizer]\label{ex:quadrilinear-coordinate}
Let $x=(x_1,x_2)$, $y=(y_1,y_2)$ and $u,v\in\R^4$. Put
\[
 a=x_1+x_2,\qquad b=x_1-x_2,\qquad
 c=y_1+y_2,\qquad d=y_1-y_2,
\]
and define
\begin{align*}
 Q(x,y,u,v)={}&u_1(acv_1+adv_2+bcv_3+bdv_4)\\
 &+u_2(adv_1-acv_2+bdv_3-bcv_4)\\
 &+u_3(bcv_1+bdv_2-acv_3-adv_4)\\
 &+u_4(bdv_1-bcv_2-adv_3+acv_4).
\end{align*}
After expanding $a,b,c,d$, every coefficient of
$x_i y_j u_s v_t$ is either $1$ or $-1$. Thus $Q$ is a sign quadrilinear form
on
\[
 \ell_2^2\times\ell_2^2\times\ell_2^4\times\ell_2^4.
\]
If $W_1,\ldots,W_4$ denote the four expressions multiplying
$u_1,\ldots,u_4$, a direct calculation gives
\[
 W_1^2+W_2^2+W_3^2+W_4^2
 =(a^2+b^2)(c^2+d^2)\|v\|_2^2
 =4\|x\|_2^2\|y\|_2^2\|v\|_2^2.
\]
Consequently, Cauchy--Schwarz yields
\[
 |Q(x,y,u,v)|\le
 2\|x\|_2\|y\|_2\|u\|_2\|v\|_2.
\]
Equality is attained at
$x=y=v=e_1$ and $u=(1,1,1,1)/2$. Hence
\[
 \|Q\|=2=\sqrt4.
\]
This is the coordinate form of an exact quadrilinear orthogonal
multiplication. In particular, the quantity $\delta_k$ in
\Cref{thm:higher-order-defect} is zero for $Q$.
\end{example}

\subsection{The higher-order fourth-moment estimate}
Fix $k\ge3$ and put $q=k-2$. Consider a sign form
\[
 A:\ell_2^{r_1}\times\cdots\times\ell_2^{r_q}
 \times\ell_2^m\times\ell_2^n\longrightarrow\R,
 \qquad m\le n.
\]
For a multi-index
$\alpha=(\alpha_1,\ldots,\alpha_q)$ with
$1\le\alpha_\nu\le r_\nu$, let
\[
 T_\alpha\in\mathcal L(\ell_2^n,\ell_2^m)
\]
be the coefficient operator obtained by fixing the first $q$ variables at
the corresponding coordinate vectors. Thus every canonical matrix entry of
$T_\alpha$ belongs to $\{-1,1\}$. The associated operator-valued multilinear
map is
\begin{equation}\label{eq:higher-coefficient-map}
 \Phi_A(x^{(1)},\ldots,x^{(q)})
 =\sum_{\alpha_1=1}^{r_1}\cdots\sum_{\alpha_q=1}^{r_q}
 x^{(1)}_{\alpha_1}\cdots x^{(q)}_{\alpha_q}T_\alpha.
\end{equation}
For $y\in\ell_2^m$ and $z\in\ell_2^n$,
\[
 A(x^{(1)},\ldots,x^{(q)},y,z)
 =\langle \Phi_A(x^{(1)},\ldots,x^{(q)})z,y\rangle,
\]
and therefore
\begin{equation}\label{eq:higher-norm-representation}
 \|A\|
 =\sup_{\|x^{(1)}\|_2=\cdots=\|x^{(q)}\|_2=1}
 \|\Phi_A(x^{(1)},\ldots,x^{(q)})\|_{\op}.
\end{equation}
Let
\[
 \Omega=S^{r_1-1}\times\cdots\times S^{r_q-1}
\]
and let $\mu$ be the product of the normalized rotation-invariant probability
measures on these spheres. When $r_\nu=1$, the corresponding factor is
$S^0=\{-1,1\}$ with its uniform probability measure. Define
\begin{equation}\label{eq:higher-normalized-defect-definition}
 \delta_k(A):=\frac1{mn^2}\int_\Omega
 \left\|\Phi_A(x)\Phi_A(x)^T-nI_m\right\|_{\HS}^2\,d\mu(x).
\end{equation}

\begin{theorem}[Higher-order fourth-moment estimate]
\label{thm:higher-order-defect}
Every sign form as above satisfies
\begin{equation}\label{eq:higher-order-normalized-norm}
 \frac{\|A\|}{\sqrt n}\ge\bigl(1+\delta_k(A)\bigr)^{1/4}.
\end{equation}
Equivalently,
\begin{equation}\label{eq:higher-order-defect}
 \|A\|^4\ge n^2+\frac1m\int_\Omega
 \left\|
 \Phi_A(x)\Phi_A(x)^T-nI_m
 \right\|_{\HS}^2\,d\mu(x).
\end{equation}
Moreover, equality $\|A\|=\sqrt n$ holds if and only if
\begin{equation}\label{eq:higher-orthogonal-multiplication}
 \Phi_A(x)\Phi_A(x)^T
 =n\prod_{\nu=1}^q\|x^{(\nu)}\|_2^2 I_m
\end{equation}
for all $x^{(\nu)}\in\ell_2^{r_\nu}$.
\end{theorem}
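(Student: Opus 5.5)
We follow the trilinear proof of \Cref{thm:defect-lower}, with a product of spheres in place of a single sphere. For $x=(x^{(1)},\ldots,x^{(q)})\in\Omega$ put $T(x)=\Phi_A(x)\Phi_A(x)^T$, a positive semidefinite $m\times m$ matrix, and $R(x)=T(x)-nI_m$. By \eqref{eq:higher-norm-representation}, $\|A\|^2\ge\lambda_{\max}(T(x))$ for every $x\in\Omega$. Since $m$ eigenvalues are averaged,
\[
\|A\|^4\ge\frac1m\tr\bigl(T(x)^2\bigr).
\]
Because $\mu$ is a probability measure, the supremum dominates the average, so
\[
\|A\|^4\ge\frac1m\int_\Omega\tr\bigl((nI_m+R(x))^2\bigr)\,d\mu(x)
=n^2+\frac{2n}{m}\int_\Omega\tr R(x)\,d\mu+\frac1m\int_\Omega\|R(x)\|_{\HS}^2\,d\mu,
\]
using the symmetry of $R(x)$. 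Dividing by $n^2$ gives \eqref{eq:higher-order-normalized-norm} from \eqref{eq:higher-order-defect}, once the middle term is shown to vanish.

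\textbf{The linear term.} Expand
\[
\tr T(x)=\sum_{\alpha,\beta}\prod_\nu x^{(\nu)}_{\alpha_\nu}x^{(\nu)}_{\beta_\nu}\,\tr\bigl(T_\alpha T_\beta^T\bigr).
\]
Under the product measure the integral factorizes over $\nu$. By the odd-power part of \Cref{lem:spherical-moments} (and directly when $r_\nu=1$), each factor equals $\delta_{\alpha_\nu\beta_\nu}/r_\nu$. Hence
\[
\int_\Omega\tr T\,d\mu=\frac{\sum_\alpha\|T_\alpha\|_{\HS}^2}{\prod_\nu r_\nu}=mn,
\]
because each $T_\alpha$ is an $m\times n$ sign matrix. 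Therefore $\int_\Omega\tr R\,d\mu=mn-nm=0$, which proves the inequality.

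\textbf{Equality.} If \eqref{eq:higher-orthogonal-multiplication} holds, then on $\Omega$ we have $T(x)=nI_m$, so $\|\Phi_A(x)\|_{\op}=\sqrt n$ there, and \eqref{eq:higher-norm-representation} gives $\|A\|=\sqrt n$. Conversely, suppose $\|A\|=\sqrt n$. The inequality forces
\[
\int_\Omega\|R\|_{\HS}^2\,d\mu=0,
\]
that is, $\delta_k(A)=0$. Since $R$ is a polynomial and hence continuous, $R\equiv0$ on the support of $\mu$, which is all of $\Omega$ (for $r_\nu=1$ the factor is $\{-1,1\}$ with full support). So $T(x)=nI_m$ on $\Omega$. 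Now $T$ is homogeneous of degree $2$ in each $x^{(\nu)}$ separately, so scaling extends this to $T(x)=n\prod_\nu\|x^{(\nu)}\|_2^2I_m$ for all nonzero $x^{(\nu)}$, and continuity covers zero vectors. The same argument shows that (ii) $\delta_k(A)=0$ is equivalent to (iii), and that (iii) implies (i); together with the bound, this closes the cycle of equivalences in the Main Theorem.

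The main point needing care is the cross-term cancellation: one must confirm that the product measure kills every off-diagonal pairing $\alpha\neq\beta$ coordinatewise. Unlike the trilinear case, no explicit fourth-moment constants are needed here, because the defect is kept in integral form. A second routine check is the passage from $\Omega$ to all of $\prod_\nu\R^{r_\nu}$, including the degenerate spheres $S^0$ when some $r_\nu=1$.
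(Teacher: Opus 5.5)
Your proof is correct and follows the paper's argument essentially step for step. Both use the eigenvalue bound $\lambda_{\max}(T)^2\ge\frac1m\tr(T^2)$, average over the product measure, kill the linear term with the factorized computation $\int_\Omega\tr T\,d\mu=mn$, and settle the equality case by continuity on the fully supported compact set $\Omega$ together with separate homogeneity in each $x^{(\nu)}$. One small misattribution: the odd-power part of \Cref{lem:spherical-moments} only gives the vanishing off-diagonal terms, while the diagonal moment $\int x_i^2\,d\sigma=1/r_\nu$ comes from symmetry and $\sum_i x_i^2=1$.
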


\begin{proof}
Fix $x=(x^{(1)},\ldots,x^{(q)})\in\Omega$ and write
\[
 S(x)=\Phi_A(x)\Phi_A(x)^T\in\mathcal L(\ell_2^m).
\]
The operator $S(x)$ is positive semidefinite. If
$\lambda_1(x),\ldots,\lambda_m(x)$ are its eigenvalues, then
\begin{align}
 \|\Phi_A(x)\|_{\op}^4
 &=\lambda_{\max}(S(x))^2\notag\\
 &\ge \frac1m\sum_{j=1}^m\lambda_j(x)^2
 =\frac1m\tr(S(x)^2).\label{eq:higher-eigenvalue-step}
\end{align}
By \eqref{eq:higher-norm-representation}, the supremum of the left-hand side
of \eqref{eq:higher-eigenvalue-step} over $\Omega$ is $\|A\|^4$. Since $\mu$
is a probability measure,
\begin{equation}\label{eq:higher-average-step}
 \|A\|^4\ge\frac1m\int_\Omega\tr(S(x)^2)\,d\mu(x).
\end{equation}

We compute the mean trace of $S(x)$ next. For each sphere factor,
rotation invariance gives
\[
 \int_{S^{r_\nu-1}}x_i^{(\nu)}x_j^{(\nu)}\,d\sigma_\nu(x^{(\nu)})
 =\frac{\delta_{ij}}{r_\nu}.
\]
Expanding \eqref{eq:higher-coefficient-map} and using Fubini's theorem,
\begin{align*}
 \int_\Omega\tr S(x)\,d\mu(x)
 &=\sum_{\alpha,\beta}
 \left(\prod_{\nu=1}^q
 \int_{S^{r_\nu-1}}
 x^{(\nu)}_{\alpha_\nu}x^{(\nu)}_{\beta_\nu}\,d\sigma_\nu\right)
 \tr(T_\alpha T_\beta^T)\\
 &=\frac1{r_1\cdots r_q}
 \sum_\alpha\|T_\alpha\|_{\HS}^2.
\end{align*}
There are $r_1\cdots r_q$ coefficient operators, and each has $mn$
canonical entries of modulus one. Hence
\begin{equation}\label{eq:higher-mean-trace}
 \int_\Omega\tr S(x)\,d\mu(x)=mn.
\end{equation}
For every self-adjoint $S$,
\[
 \tr(S^2)
 =\|S-nI_m\|_{\HS}^2
 +2n\tr S-mn^2.
\]
Integrating this identity, using \eqref{eq:higher-mean-trace}, and substituting
in \eqref{eq:higher-average-step}, we obtain
\begin{align*}
 \|A\|^4
 &\ge\frac1m\int_\Omega\tr(S(x)^2)\,d\mu(x)\\
 &=\frac1m\int_\Omega\|S(x)-nI_m\|_{\HS}^2\,d\mu(x)
   +\frac{2n}{m}(mn)-n^2\\
 &=n^2+\frac1m\int_\Omega
 \|S(x)-nI_m\|_{\HS}^2\,d\mu(x),
\end{align*}
which is \eqref{eq:higher-order-defect}.

Assume now that $\|A\|=\sqrt n$. The nonnegative integral in
\eqref{eq:higher-order-defect} must vanish. Its integrand is continuous on the
compact space $\Omega$, and hence
\[
 \Phi_A(x)\Phi_A(x)^T=nI_m\qquad(x\in\Omega).
\]
Multilinear homogeneity gives \eqref{eq:higher-orthogonal-multiplication} for
arbitrary vectors. Conversely, if \eqref{eq:higher-orthogonal-multiplication}
holds, then for unit vectors $x^{(1)},\ldots,x^{(q)}$ the $m$ singular values
of $\Phi_A(x)$ are equal to $\sqrt n$. Equation
\eqref{eq:higher-norm-representation} then yields $\|A\|=\sqrt n$.
\end{proof}

\begin{remark}
The identity \eqref{eq:higher-orthogonal-multiplication} says that, for
$x=(x^{(1)},\ldots,x^{(q)})\in\Omega$, the operator
$n^{-1/2}\Phi_A(x)$ is a coisometry. Equivalently, the multilinear map
\[
(x^{(1)},\ldots,x^{(q)},y)
\longmapsto
\Phi_A(x^{(1)},\ldots,x^{(q)})^T y
\]
satisfies
\[
\|\Phi_A(x)^T y\|_2
=
\sqrt{n}\,
\|x^{(1)}\|_2\cdots\|x^{(q)}\|_2\|y\|_2.
\]
Thus the adjoint map is a scaled multilinear orthogonal multiplication. When $m=n$ and $x\in\Omega$, the matrix
$n^{-1/2}\Phi_A(x)$ is orthogonal. More generally, if
$x^{(1)},\ldots,x^{(q)}$ are all nonzero, then
\[
\frac{1}
{\sqrt{n}\,\prod_{\nu=1}^{q}\|x^{(\nu)}\|_2}
\Phi_A(x^{(1)},\ldots,x^{(q)})
\]
is orthogonal. Thus \Cref{thm:higher-order-defect} keeps the first $q$ variables separate
instead of flattening them into one large coordinate space; compare
\cite{LiNakatsukasaSomaUschmajew}.
\end{remark}

\begin{corollary}[A higher-order $L^2$ bound]

Assume $m=n$ and
\[
 \|A\|\le(1+\varepsilon)\sqrt n.
\]
Then
\begin{equation}
 \delta_k(A)
 =\frac1{n^3}\int_\Omega
 \left\|\Phi_A(x)\Phi_A(x)^T-nI_n\right\|_{\HS}^2\,d\mu(x)
 \le (1+\varepsilon)^4-1.
\end{equation}
\end{corollary}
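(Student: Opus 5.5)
This corollary is a direct rearrangement of \Cref{thm:higher-order-defect}, specialised to the square case $m=n$. The plan has three steps.

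First, we identify the normalisation. Setting $m=n$ in \eqref{eq:higher-normalized-defect-definition} gives $mn^2=n^3$. So the displayed expression for $\delta_k(A)$ is literally its definition in this format, and nothing needs to be proved for the equality part of the statement.

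Second, we use the hypothesis to bound the norm from above. Raising $\|A\|\le(1+\varepsilon)\sqrt n$ to the fourth power (both sides are nonnegative) gives
\[
 \frac{\|A\|^4}{n^2}\le(1+\varepsilon)^4 .
\]

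Third, we combine this with \eqref{eq:higher-order-normalized-norm} raised to the fourth power, which reads $\|A\|^4/n^2\ge 1+\delta_k(A)$. Chaining the two inequalities yields $1+\delta_k(A)\le(1+\varepsilon)^4$, that is,
\[
 \delta_k(A)\le(1+\varepsilon)^4-1 .
\]
Alternatively, one can start from \eqref{eq:higher-order-defect} with $m=n$, subtract $n^2$ from both sides and divide by $n^2$; this gives the same bound directly.

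There is no real obstacle here. The only points to check are that $\delta_k(A)\ge0$, so the fourth-power manipulation is legitimate, and that the factor $1/m$ in \eqref{eq:higher-order-defect} becomes $1/n$, which combined with division by $n^2$ produces the normalisation $n^{-3}$. One might also note that, unlike \Cref{lem:spectral-slice}, no sharper square-case improvement is claimed, so the fourth-moment bound alone suffices.
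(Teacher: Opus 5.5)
Your proposal is correct and matches the paper's proof, which simply says the bound follows immediately from \eqref{eq:higher-order-normalized-norm}. Specialising $mn^2=n^3$, raising both inequalities to the fourth power (legitimate since $\|A\|\ge0$), and chaining them is exactly that one-line argument written out.
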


\begin{proof}
This follows immediately from
\eqref{eq:higher-order-normalized-norm}.
\end{proof}

For square last variables, put
\[
 K_k(r_1,\ldots,r_q;n)
 =\frac{\Lambda_k(r_1,\ldots,r_q,n,n)}{\sqrt n}.
\]

\begin{corollary}[A higher-order lower bound]

There is a universal constant $c>0$ such that, for every $k\ge3$,
\[
 \max_{1\le\nu\le q}r_\nu
 \ge2\lceil\log_2(8n)\rceil
\]
implies
\[
 K_k(r_1,\ldots,r_q;n)-1
 \ge\frac{c}{(1+\log_2n)^4}.
\]
\end{corollary}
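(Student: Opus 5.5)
The plan is to reduce the higher-order problem to the trilinear square case, \Cref{thm:supercritical-gap}, by fixing all but one of the first $q$ variables at coordinate vectors. Let $A$ be an extremal sign form on $\ell_2^{r_1}\times\cdots\times\ell_2^{r_q}\times\ell_2^n\times\ell_2^n$; one exists because there are finitely many sign tensors of this format. Choose an index $\nu_0$ with $r_{\nu_0}=\max_\nu r_\nu\ge 2\lceil\log_2(8n)\rceil$. For every $\nu\ne\nu_0$, fix $x^{(\nu)}=e_{1}\in\ell_2^{r_\nu}$.

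The resulting restricted form
\[
 B(x,y,z)=A(e_1,\ldots,e_1,x,e_1,\ldots,e_1,y,z),
\]
with $x$ in the $\nu_0$th slot, is a trilinear form on $\ell_2^{r_{\nu_0}}\times\ell_2^n\times\ell_2^n$. Its coefficients are among those of $A$, so they all lie in $\{-1,1\}$. Its slices are the matrices $T_\alpha$ whose multi-index is fixed at $1$ in every coordinate except the $\nu_0$th.

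Since unit coordinate vectors are admissible in the supremum defining $\|A\|$, we get $\|B\|\le\|A\|$; equivalently, \eqref{eq:higher-norm-representation} restricted to these choices is at most the full supremum. Hence
\[
 \Lambda_k(r_1,\ldots,r_q,n,n)=\|A\|\ge\|B\|\ge\Lambda(r_{\nu_0},n,n)=K_{r_{\nu_0},n}\sqrt n .
\]
Dividing by $\sqrt n$ gives $K_k(r_1,\ldots,r_q;n)\ge K_{r_{\nu_0},n}$. Because $r_{\nu_0}\ge 2\lceil\log_2(8n)\rceil=q_n$, \Cref{thm:supercritical-gap} yields
\[
 K_{r_{\nu_0},n}-1\ge \frac{c}{(1+\log_2 n)^4}
\]
with the same universal constant $c=(75\cdot 8^4)^{-1}$. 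This constant is independent of $k$, which is exactly the uniformity the statement requires.

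There is no real obstacle here. The only points to check are that the monotonicity of the norm under restriction to coordinate vectors is legitimate, and that the restricted tensor again has square $n\times n$ sign slices, which places it in the format covered by Main Theorem~A(iii). The case $q=1$ ($k=3$) is just \Cref{thm:supercritical-gap} itself.
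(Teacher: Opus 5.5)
Your proposal is correct and matches the paper's proof. Both restrict every first variable except the one with maximal $r_\nu$ to coordinate vectors, which gives a trilinear square sign form of norm at most $\|A\|$ and hence $K_k(r_1,\ldots,r_q;n)\ge K_{r_{\nu_0},n}$. Both then conclude with \Cref{thm:supercritical-gap} and its universal constant.
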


\begin{proof}
Fix $\nu\in\{1,\ldots,q\}$. In all first variables other than the
$\nu$th one, choose coordinate vectors. The resulting restriction of $A$ is
a trilinear sign form on
\[
 \ell_2^{r_\nu}\times\ell_2^n\times\ell_2^n,
\]
and its norm is at most $\|A\|$. Consequently,
\[
 K_k(r_1,\ldots,r_q;n)\ge K_{r_\nu,n}.
\]
Taking $\nu$ for which $r_\nu$ is maximal and applying
\Cref{thm:supercritical-gap} proves the assertion.
\end{proof}

\medskip
\begin{proof}[Proof of \Cref{thm:intro-higher-order-main}]
The map in the statement is the map defined in
\eqref{eq:higher-coefficient-map}, and the normalized estimate is precisely
\Cref{thm:higher-order-defect}. By definition, $\delta_k(A)=0$ if and only if
the nonnegative continuous integrand in
\eqref{eq:higher-normalized-defect-definition} vanishes on $\Omega$.
Multilinear homogeneity therefore shows that this condition, as well as
$\|A\|=\sqrt n$, is equivalent to
\[
 \Phi_A(x)\Phi_A(x)^T
 =n\prod_{\nu=1}^q\|x^{(\nu)}\|_2^2I_m
\]
for all choices of the first variables. The equivalent norm identity gives the
stated scaled multilinear orthogonal multiplication. The $L^2$ estimate follows
from the definition of $\delta_k(A)$ and
\[
 \delta_k(A)\le \left(\frac{\|A\|}{\sqrt n}\right)^4-1.
\]
\end{proof}

\section{Higher-order constructions}\label{sec:higher-order-constructions}

We give two further constructions from orthogonal multiplications. The first
is exact and the second is asymptotic. They are not needed in the proofs of
Main Theorems~A and~B.

\begin{theorem}[Exact tensor-product construction]\label{thm:higher-exact-construction}
Fix $k\ge3$ and put $q=k-2$. For each $\nu=1,\ldots,q$, let $N_{\nu}$ be a
Hadamard order and let $r_{\nu}\le\rho(N_{\nu})$. If
\[
N=N_{1}\cdots N_{q} \qquad\text{and}\qquad1\le m\le N,
\]
then
\[
\Lambda_{k}(r_{1},\ldots,r_{q},m,N)=\sqrt N.
\]

\end{theorem}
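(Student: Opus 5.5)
The lower bound $\Lambda_k(r_1,\ldots,r_q,m,N)\ge\sqrt N$ is immediate from \eqref{eq:universal-tensor-lower-bound}, so the work is to construct a sign form attaining $\sqrt N$. The plan is to tensor together the square trilinear extremizers from \Cref{thm:exact-square}, one for each factor $N_\nu$, and then truncate rows as in the exact rectangular construction. Concretely, for each $\nu$ the proof of \Cref{thm:exact-square} (using a Hadamard matrix $H_\nu$ of order $N_\nu$ and the Geramita--Pullman integral Hurwitz--Radon family) gives sign matrices $S^{(\nu)}_1,\ldots,S^{(\nu)}_{r_\nu}\in\{-1,1\}^{N_\nu\times N_\nu}$ with
\[
 S^{(\nu)}(x^{(\nu)})S^{(\nu)}(x^{(\nu)})^T=N_\nu\|x^{(\nu)}\|_2^2I_{N_\nu},
 \qquad S^{(\nu)}(x^{(\nu)})=\sum_i x^{(\nu)}_iS^{(\nu)}_i .
\]

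For a multi-index $\alpha=(\alpha_1,\ldots,\alpha_q)$, I will define
\[
 T_\alpha=P_m\bigl(S^{(1)}_{\alpha_1}\otimes\cdots\otimes S^{(q)}_{\alpha_q}\bigr)\in\{-1,1\}^{m\times N},
\]
where $P_m$ is the projection onto the first $m$ coordinates. The entries are signs because a Kronecker product of sign matrices has entries that are products of signs, and row truncation preserves this. By multilinearity of the Kronecker product, the associated map \eqref{eq:higher-coefficient-map} factors as
\[
 \Phi_A(x)=P_m\bigl(S^{(1)}(x^{(1)})\otimes\cdots\otimes S^{(q)}(x^{(q)})\bigr),
\]
since $\sum_\alpha\prod_\nu x^{(\nu)}_{\alpha_\nu}\bigotimes_\nu S^{(\nu)}_{\alpha_\nu}=\bigotimes_\nu\sum_i x^{(\nu)}_iS^{(\nu)}_i$.

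The mixed-product rule $(\bigotimes X_\nu)(\bigotimes Y_\nu)^T=\bigotimes X_\nu Y_\nu^T$ then gives
\[
 \Phi_A(x)\Phi_A(x)^T=P_m\Bigl(\bigotimes_\nu N_\nu\|x^{(\nu)}\|_2^2I_{N_\nu}\Bigr)P_m^T=N\prod_\nu\|x^{(\nu)}\|_2^2\,I_m .
\]
This is exactly \eqref{eq:higher-orthogonal-multiplication} with $n=N$, so \Cref{thm:higher-order-defect} yields $\|A\|=\sqrt N$, and the matching lower bound gives equality.

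I expect no real obstacle. The only point needing care is the bookkeeping that the Kronecker factorization of $\Phi_A$ is compatible with the indexing of the coefficient operators $T_\alpha$, i.e.\ a fixed identification of $\R^N$ with $\bigotimes_\nu\R^{N_\nu}$ via lexicographic coordinates, which is also the identification used for the sign entries. The hypothesis $m\le N$ is needed only for the truncation.
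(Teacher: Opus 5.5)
Your proposal is correct and follows essentially the same route as the paper: tensor the square Hadamard--Hurwitz--Radon extremizers, truncate with $P_m$, and use the mixed-product rule to get $\Phi_A(x)\Phi_A(x)^T=N\prod_\nu\|x^{(\nu)}\|_2^2I_m$. The only cosmetic difference is in the last step. You conclude $\|A\|=\sqrt N$ through the equality criterion of \Cref{thm:higher-order-defect}, whereas the paper reads off $\|\Phi_A(x)\|_{\op}=\sqrt N\prod_\nu\|x^{(\nu)}\|_2$ directly and then appeals to the universal lower bound.
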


\begin{proof}
For each $\nu$, the square construction provides sign matrices
\[
S_{1}^{(\nu)},\ldots,S_{r_{\nu}}^{(\nu)} \in\{-1,1\}^{N_{\nu}\times N_{\nu}}
\]
such that, with
\[
S^{(\nu)}(x^{(\nu)}) =\sum_{i=1}^{r_{\nu}}x_{i}^{(\nu)}S_{i}^{(\nu)},
\]
one has
\[
S^{(\nu)}(x^{(\nu)})S^{(\nu)}(x^{(\nu)})^{T} =N_{\nu}\|x^{(\nu)}\|_{2}
^{2}I_{N_{\nu}}.
\]
Let $P_{m}:\R^{N}\to\R^{m}$ be the projection onto the first
$m$ coordinates and define
\[
B(x^{(1)},\ldots,x^{(q)}) =P_{m}\bigl(S^{(1)}(x^{(1)})\otimes\cdots\otimes
S^{(q)}(x^{(q)})\bigr).
\]
The coefficient of $x_{i_{1}}^{(1)}\cdots x_{i_{q}}^{(q)}$ is
\[
P_{m}\bigl(S_{i_{1}}^{(1)}\otimes\cdots\otimes S_{i_{q}}^{(q)}\bigr),
\]
which is an $m\times N$ sign matrix. Therefore
\[
A(x^{(1)},\ldots,x^{(q)},y,z) =\langle B(x^{(1)},\ldots,x^{(q)})z,y\rangle
\]
is unimodular.

Using $(X\otimes Y)(X\otimes Y)^{T}=(XX^{T})\otimes(YY^{T})$, we obtain
\begin{align*}
BB^{T}  &  =P_{m}\left(  \bigotimes_{\nu=1}^{q} S^{(\nu)}(x^{(\nu)})S^{(\nu
)}(x^{(\nu)})^{T}\right)  P_{m}^{T}\\
&  =N\prod_{\nu=1}^{q}\|x^{(\nu)}\|_{2}^{2}I_{m}.
\end{align*}
Hence
\[
\|B(x^{(1)},\ldots,x^{(q)})\|_{\op} =\sqrt N\prod_{\nu=1}^{q}
\|x^{(\nu)}\|_{2}.
\]
It follows that $\|A\|\le\sqrt N$. The reverse inequality follows from
\eqref{eq:universal-tensor-lower-bound}, applied to the last variable.
\end{proof}

\begin{remark}[The first quadrilinear case]
For $q=2$, $N_1=N_2=2$, the tensor-product construction gives the
quadrilinear extremizer written in coordinates in
\Cref{ex:quadrilinear-coordinate}. The two descriptions are the same after
identifying its coefficient matrices with $M_i\otimes M_j$.
\end{remark}

\begin{theorem}[Higher-order asymptotic construction]
Fix $k\ge3$ and put $q=k-2$. Let $r_{1,n},\ldots,r_{q,n}$ and $m_{n}$ be
positive integers such that $m_{n}\le n$ and
\begin{equation}
\limsup_{n\to\infty} \frac{r_{1,n}+\cdots+r_{q,n}}{\log_{2}n}<2. \label{eq:higher-subcritical-condition}
\end{equation}
Then
\[
\Lambda_{k}(r_{1,n},\ldots,r_{q,n},m_{n},n) =(1+o(1))\sqrt n.
\]

\end{theorem}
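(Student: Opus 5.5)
The plan is to imitate the trilinear asymptotic construction (\Cref{thm:subcritical-asymptotic}), using the exact tensor-product construction (\Cref{thm:higher-exact-construction}) in place of the rectangular one. The lower bound $\Lambda_k\ge\sqrt n$ comes from \eqref{eq:universal-tensor-lower-bound}, so only the upper bound $\Lambda_k(r_{1,n},\ldots,r_{q,n},m_n,n)\le(1+o(1))\sqrt n$ needs proof. Choose $c$ with $\limsup_n (r_{1,n}+\cdots+r_{q,n})/\log_2 n<c<2$. Then for large $n$ we have $\sum_\nu r_{\nu,n}\le c\log_2 n$.

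The key step is to find Hadamard orders $N_{1,n},\ldots,N_{q,n}$ with $\rho(N_{\nu,n})\ge r_{\nu,n}$ and $N_n:=\prod_\nu N_{\nu,n}\ge n$, $N_n/n\to1$. For $\nu\ge2$, put $N_{\nu,n}=2^{a_{\nu,n}}$ with $a_{\nu,n}=\lceil r_{\nu,n}/2\rceil$. By \eqref{eq:rho-valuation-bound}, $\rho(N_{\nu,n})\ge 2a_{\nu,n}\ge r_{\nu,n}$.

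Absorb the remaining factor into the first slot. Let $a_{1,n}=\lceil r_{1,n}/2\rceil$ and $b_n=\sum_{\nu\ge2}a_{\nu,n}$. Apply \Cref{prop:hadamard-relative-approximation} with $x_n=n/2^{a_{1,n}+b_n}$ to get a Hadamard order $h_n\ge x_n$ with $h_n/x_n\to1$. Then set $N_{1,n}=2^{a_{1,n}}h_n$, which gives $\rho(N_{1,n})\ge 2a_{1,n}\ge r_{1,n}$.

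The hypothesis ensuring $x_n\to\infty$ is where the sum condition \eqref{eq:higher-subcritical-condition} enters. We have $a_{1,n}+b_n\le \tfrac c2\log_2 n+q$, so $x_n\ge 2^{-q}n^{1-c/2}\to\infty$, since $q$ is fixed. Then $N_n=2^{a_{1,n}+b_n}h_n\ge n$ and $N_n/n=h_n/x_n\to1$.

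With these orders, \Cref{thm:higher-exact-construction} (with $m=m_n\le n\le N_n$) gives sign matrices realizing the map $B(x)=P_{m_n}\bigotimes_\nu S^{(\nu)}(x^{(\nu)})$, with $\|B(x)\|_{\op}=\sqrt{N_n}\prod_\nu\|x^{(\nu)}\|_2$. Compose on the right with the coordinate isometric embedding $J_n:\R^n\to\R^{N_n}$, exactly as in the trilinear proof. This yields $m_n\times n$ sign coefficient matrices for a $k$-linear sign form $A_n$ on $\ell_2^{r_{1,n}}\times\cdots\times\ell_2^{r_{q,n}}\times\ell_2^{m_n}\times\ell_2^n$. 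By \eqref{eq:higher-norm-representation}, $\|A_n\|\le\sqrt{N_n}$, so $1\le\|A_n\|/\sqrt n\le\sqrt{N_n/n}\to1$.

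I expect the only delicate point to be the bookkeeping in the choice of orders. The sum condition, rather than a condition on each $r_{\nu,n}$ separately, is exactly what guarantees that the product of the $q$ power-of-two factors is $o(n)$. Because the number of factors is fixed, the ceilings cost only a bounded factor $2^q$, and the single use of \Cref{prop:hadamard-relative-approximation} then fixes the leftover ratio. The degenerate case $r_{\nu,n}=0$ does not arise, since the $r_{\nu,n}$ are positive, and $\rho\ge1$ always.
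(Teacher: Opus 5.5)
Your proposal is correct and follows essentially the same route as the paper's proof. You put $2^{\lceil r_{\nu,n}/2\rceil}$ in each slot and absorb a single Hadamard order $h_n\approx n/2^{b_n}$ into the first slot via \Cref{prop:hadamard-relative-approximation}. You then apply \Cref{thm:higher-exact-construction} and restrict the last variable to $\R^n$. Your bound $2^{-q}n^{1-c/2}\to\infty$ is exactly where the paper also uses the additive condition.
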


\begin{proof}
Choose $c<2$ larger than the limsup in \eqref{eq:higher-subcritical-condition}, and set
\[
a_{\nu,n}=\left\lceil \frac{r_{\nu,n}}2\right\rceil , \qquad b_{n}=\sum
_{\nu=1}^{q}a_{\nu,n}.
\]
For all sufficiently large $n$,
\[
b_{n}\le\frac c2\log_{2}n+q.
\]
Consequently,
\[
x_{n}:=\frac{n}{2^{b_{n}}} \ge2^{-q}n^{1-c/2}\longrightarrow\infty.
\]
Choose Hadamard orders $h_{n}\ge x_{n}$ with $h_{n}/x_{n}\to1$. Define
\[
N_{1,n}=2^{a_{1,n}}h_{n}, \qquad N_{\nu,n}=2^{a_{\nu,n}} \quad(2\le\nu\le q).
\]
Each $N_{\nu,n}$ is a Hadamard order and
\[
\rho(N_{\nu,n})\ge2\nu_{2}(N_{\nu,n}) \ge2a_{\nu,n}\ge r_{\nu,n}.
\]
Their product is
\[
T_{n}=\prod_{\nu=1}^{q}N_{\nu,n}=2^{b_{n}}h_{n}.
\]
Thus $T_{n}\ge n\ge m_{n}$ and
\[
\frac{T_{n}}{n}=\frac{h_{n}}{x_{n}}\longrightarrow1.
\]
\Cref{thm:higher-exact-construction} gives a form with last dimension $T_{n}$ and
norm $\sqrt{T_{n}}$. Restricting the last variable to the first $n$ coordinate
directions produces a unimodular form $A_{n}$ with
\[
\|A_{n}\|\le\sqrt{T_{n}}.
\]
By \eqref{eq:universal-tensor-lower-bound}, $\|A_{n}\|\ge\sqrt n$. Since $T_{n}/n\to1$, the conclusion follows.
\end{proof}

\begin{corollary}
Fix $k\ge3$, put $q=k-2$, and let $\alpha_{1},\ldots,\alpha_{q}\ge0$ satisfy
\[
\alpha_{1}+\cdots+\alpha_{q}<2.
\]
For every sequence $1\le m_{n}\le n$,
\[
\Lambda_{k}\bigl(
\max\{1,\left\lfloor \alpha_{1}\log_{2}n\right\rfloor \},\ldots,
\max\{1,\left\lfloor \alpha_{q}\log_{2}n\right\rfloor \},m_{n},n\bigr)
=(1+o(1))\sqrt n.
\]

\end{corollary}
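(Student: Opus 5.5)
This corollary follows from the higher-order asymptotic construction just proved, once we check that its hypothesis \eqref{eq:higher-subcritical-condition} holds for the sequences
\[
r_{\nu,n}=\max\{1,\lfloor\alpha_\nu\log_2 n\rfloor\},\qquad \nu=1,\ldots,q.
\]
This parallels how the trilinear corollary with $\max\{1,\lfloor c\log_2n\rfloor\}$ follows from \Cref{thm:subcritical-asymptotic}. The $r_{\nu,n}$ are positive integers, and $1\le m_n\le n$ is assumed, so only the limsup condition needs checking.

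First, each $r_{\nu,n}$ is bounded pointwise. If $\alpha_\nu\log_2 n\ge1$, then $r_{\nu,n}=\lfloor\alpha_\nu\log_2 n\rfloor\le\alpha_\nu\log_2 n$. Otherwise $r_{\nu,n}=1$. In either case
\[
r_{\nu,n}\le\alpha_\nu\log_2 n+1 \qquad(n\ge2).
\]
This covers the degenerate case $\alpha_\nu=0$, where $r_{\nu,n}=1$ for all $n$.

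Summing over $\nu$ gives
\[
\frac{r_{1,n}+\cdots+r_{q,n}}{\log_2 n}\le(\alpha_1+\cdots+\alpha_q)+\frac{q}{\log_2 n}.
\]
Since $q$ is fixed, the last term tends to $0$. Hence the limsup is at most $\alpha_1+\cdots+\alpha_q<2$, which is exactly \eqref{eq:higher-subcritical-condition}.

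The higher-order asymptotic construction then yields
\[
\Lambda_k(r_{1,n},\ldots,r_{q,n},m_n,n)=(1+o(1))\sqrt n,
\]
which is the claim. There is no real obstacle here. The only point needing care is the additive $+1$ from the $\max\{1,\cdot\}$ truncation, and it is harmless because $q$ is fixed and $\log_2 n\to\infty$.
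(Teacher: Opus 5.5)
Your proposal is correct and follows the paper's intended route: the paper states this corollary without proof as an immediate consequence of the higher-order asymptotic construction, exactly as you apply it. Your bound $r_{\nu,n}\le\alpha_\nu\log_2 n+1$ supplies the one check that route needs, namely that the limsup hypothesis of that theorem is satisfied with value at most $\alpha_1+\cdots+\alpha_q<2$.
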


\begin{remark}
If we group the first $q$ variables into one variable, their dimensions are
replaced by
\[
R_{n}=\prod_{\nu=1}^{q} r_{\nu,n}.
\]
The trilinear theorem would then require $R_n/\log_2 n<2$ asymptotically.
Our tensor-product construction only requires the additive condition \eqref{eq:higher-subcritical-condition}. For example, if $r_{\nu,n}
\sim\alpha_{\nu}\log_{2}n$ with all $\alpha_{\nu}>0$, then $R_{n}$ has order
$(\log_2 n)^{q}$, while the construction above still applies whenever
$\alpha_{1}+\cdots+\alpha_{q}<2$.
\end{remark}

\begin{remark}
The number $2$ in \eqref{eq:higher-subcritical-condition} comes from
$\rho(N)\ge2\nu_2(N)$. The proof uses a power of two with exponent about
$r_{\nu,n}/2$ for each first variable. Their total exponent must stay below
$\log_2 n$ so that the remaining Hadamard factor tends to infinity.
\end{remark}

\appendix
\section{Best rank-one approximation ratios}

This appendix gives some consequences for the spectral-to-Frobenius norm
ratio. They are not used in the main proofs. Let
\[
\mathbb{T}(d_1,\ldots,d_k)
 =\R^{d_1}\otimes\cdots\otimes\R^{d_k},
\]
with the inner product for which the canonical tensors form an orthonormal
basis. If
\[
\mathcal{X}=(x_{i_1\ldots i_k})\in\mathbb{T}(d_1,\ldots,d_k),
\]
write
\[
\|\mathcal{X}\|_2
 =\left(\sum_{i_1,\ldots,i_k}|x_{i_1\ldots i_k}|^2\right)^{1/2}.
\]
Its spectral norm is
\[
\|\mathcal{X}\|_{\sigma}
 =\max_{\|u^{(1)}\|_2=\cdots=\|u^{(k)}\|_2=1}
 \bigl|\langle\mathcal{X},
 u^{(1)}\otimes\cdots\otimes u^{(k)}\rangle\bigr|.
\]
Following Qi \cite{Qi}, the best rank-one approximation ratio is
\[
\operatorname{App}_k(\R;d_1,\ldots,d_k)
 :=\min_{\mathcal{X}\ne0}
 \frac{\|\mathcal{X}\|_{\sigma}}{\|\mathcal{X}\|_2}.
\]
This terminology comes from the identity
\[
\min_{\substack{\lambda\in\R\\
\|u^{(1)}\|_2=\cdots=\|u^{(k)}\|_2=1}}
\left\|\mathcal{X}-\lambda
u^{(1)}\otimes\cdots\otimes u^{(k)}\right\|_2^2
 =\|\mathcal{X}\|_2^2-\|\mathcal{X}\|_{\sigma}^2.
\]
Indeed, for a fixed unit rank-one tensor $\mathcal{Y}$, the best choice is
$\lambda=\langle\mathcal{X},\mathcal{Y}\rangle$.

The spectral-to-Frobenius norm ratio has been studied in several settings. Orthogonal tensors and exact attainment of the
universal lower bound are treated in \cite{LiNakatsukasaSomaUschmajew};
necessary conditions and special extremizers appear in
\cite{AgrachevKozhasovUschmajew}; probabilistic bounds are given in
\cite{KozhasovCueto}; and the nonnegative problem is studied in
\cite{CaoLiWang}. We now apply our sign constructions and equality results to this ratio.

We shall use the following theorem of Li, Nakatsukasa, Soma and Uschmajew
\cite{LiNakatsukasaSomaUschmajew}.

\begin{theorem}
\label{thm:li-app} For arbitrary positive integers $d_{1},\ldots,d_{k}$,
\[
\operatorname{App}_{k}(\R;d_{1},\ldots,d_{k}) \ge\left(  \min_{1\le
\nu\le k} \prod_{\substack{1\le\mu\le k\\\mu\ne\nu}}d_{\mu}\right)  ^{-1/2}.
\]
In particular, if $d_{1}\le\cdots\le d_{k}$, the right-hand side is
$1/\sqrt{d_{1}\cdots d_{k-1}}$. The lower bound is attained if and only if an orthogonal tensor of the
indicated format exists. Moreover, every nonzero tensor attaining equality is
a scalar multiple of an orthogonal tensor in the sense of
\cite[Theorem~3.5]{LiNakatsukasaSomaUschmajew}.
\end{theorem}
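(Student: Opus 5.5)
The inequality itself follows from a norm comparison at a well-chosen tensor; the main work is the equality case.

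\emph{Lower bound.} Fix a nonzero $\mathcal X\in\mathbb T(d_1,\ldots,d_k)$ and an index $\nu$. Flatten $\mathcal X$ into a matrix $X_\nu$ of size $d_\nu\times\prod_{\mu\ne\nu}d_\mu$. Consider the multilinear form on the $k-1$ factors $\mu\ne\nu$ given by
\[
(u^{(\mu)})_{\mu\ne\nu}\longmapsto \mathcal X(\cdot,u^{(\mu)},\ldots)\in\R^{d_\nu},
\]
whose Euclidean norm, maximized over unit vectors, equals $\|\mathcal X\|_\sigma$ by duality in the $\nu$-th slot. Averaging is not needed here. Instead, decompose $\|\mathcal X\|_2^2$ as a sum over canonical basis vectors in the factors $\mu\ne\nu$:
\[
\|\mathcal X\|_2^2=\sum_{(i_\mu)_{\mu\ne\nu}}\bigl\|\mathcal X(\cdot,e_{i_\mu},\ldots)\bigr\|_2^2 .
\]
Each term is at most $\|\mathcal X\|_\sigma^2$, and there are $\prod_{\mu\ne\nu}d_\mu$ terms. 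Hence
\[
\|\mathcal X\|_\sigma\ge\Bigl(\prod_{\mu\ne\nu}d_\mu\Bigr)^{-1/2}\|\mathcal X\|_2 .
\]
This is the same coordinate argument that gives \eqref{eq:universal-tensor-lower-bound}. Taking the best $\nu$, namely the one that minimizes $\prod_{\mu\ne\nu}d_\mu$, gives the stated bound. When $d_1\le\cdots\le d_k$ the best choice is $\nu=k$, which yields $1/\sqrt{d_1\cdots d_{k-1}}$.

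\emph{Equality case.} Assume $d_1\le\cdots\le d_k$ and suppose $\|\mathcal X\|_\sigma=\|\mathcal X\|_2/\sqrt{d_1\cdots d_{k-1}}$. Then every term in the sum above equals $\|\mathcal X\|_\sigma^2$. The canonical basis was arbitrary, because the norms are invariant under orthogonal changes of basis in each of the factors $1,\ldots,k-1$. Therefore, for all orthonormal $u^{(1)},\ldots,u^{(k-1)}$ in their respective spaces, we get
\[
\|\mathcal X(u^{(1)},\ldots,u^{(k-1)},\cdot)\|_2=\|\mathcal X\|_\sigma .
\]
So the map
\[
(u^{(1)},\ldots,u^{(k-1)})\longmapsto \mathcal X(u^{(1)},\ldots,u^{(k-1)},\cdot)
\]
has constant norm on products of unit vectors. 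By multilinearity this gives
\[
\|\mathcal X(u^{(1)},\ldots,u^{(k-1)},\cdot)\|_2=c\prod\|u^{(\mu)}\|_2 .
\]
This is exactly the definition of an orthogonal tensor, up to the scalar $c$, in the sense of \cite[Theorem~3.5]{LiNakatsukasaSomaUschmajew}. Conversely, an orthogonal tensor attains equality by reversing the computation.

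\emph{Main obstacle.} The delicate point is the passage from ``equality for the canonical basis'' to ``equality for every unit vector''. I would handle it through the basis-invariance observation: every unit vector $u^{(\mu)}$ is the first vector of some orthonormal basis, and the extremal identity holds for every such basis. I would check this carefully, since it is where the exact format hypothesis, that $\nu$ is the minimizing index, enters. For full rigor I would cite \cite[Theorem~3.5]{LiNakatsukasaSomaUschmajew} for the precise notion of orthogonality.
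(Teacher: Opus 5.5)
The paper gives no proof of this statement; it quotes it from Li--Nakatsukasa--Soma--Uschmajew and uses it only as a black box in the appendix. Your argument is a correct, self-contained proof, and it appears to be essentially the standard argument behind the cited result.

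\emph{Lower bound.} Fix orthonormal bases $(b^{(\mu)}_i)$ in the factors $\mu\ne\nu$. Then $\|\mathcal X\|_2^2$ equals the sum of the squared norms of the $\prod_{\mu\ne\nu}d_\mu$ contractions of $\mathcal X$ with these basis tuples. Each term is at most $\|\mathcal X\|_\sigma^2$, which gives the bound for every $\nu$, and in particular for the $\nu$ minimizing $\prod_{\mu\ne\nu}d_\mu$.

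\emph{Equality case.} Equality for that $\nu$ forces every term to equal $\|\mathcal X\|_\sigma^2$. Completing arbitrary unit vectors $u^{(\mu)}$ to orthonormal bases then gives $\|\mathcal X(u^{(1)},\ldots,u^{(k-1)},\cdot)\|_2=\|\mathcal X\|_\sigma\prod_\mu\|u^{(\mu)}\|_2$. This multilinear norm identity characterizes orthogonal tensors in the cited reference, up to the positive factor $\|\mathcal X\|_\sigma$.

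Two small corrections to your commentary.

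\begin{itemize}
\item The format hypothesis plays no role in the basis-change step. That step needs only the Parseval identity, which holds for any orthonormal bases whatever the dimensions. The format enters earlier: equality in the stated bound is equality in the coordinate inequality for a mode $\nu$ of maximal dimension. When several modes tie for the maximum, the same argument gives orthogonality with respect to each of them.
\item For the ``only if'' half of the existence statement, state explicitly that the minimum defining $\operatorname{App}_k$ is attained. This holds by continuity of the ratio on the compact Frobenius unit sphere. It ensures that $\operatorname{App}_k$ equal to the bound produces an actual nonzero extremal tensor, to which your equality argument then applies.
\end{itemize}
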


For comparison with the full tensor space, define the sign-restricted ratio
\[
\operatorname{App}_{k}^{\pm}(d_{1},\ldots,d_{k}) :=\min_{\mathcal{E}
\in\{-1,1\}^{d_{1}\times\cdots\times d_{k}}} \frac{\|\mathcal{E}\|_{\sigma}
}{\|\mathcal{E}\|_{2}}.
\]
Since every sign tensor of the indicated format satisfies
$\|\mathcal{E}\|_2=\sqrt{d_1\cdots d_k}$,
\begin{equation}
\label{eq:sign-app-lambda}\operatorname{App}_{k}^{\pm}(d_{1},\ldots,d_{k})
=\frac{\Lambda_{k}(d_{1},\ldots,d_{k})} {\sqrt{d_{1}\cdots d_{k}}}.
\end{equation}
For $k=3$, we use $\Lambda_{3}(r,m,n)=\Lambda(r,m,n)$.

The exact square theorem and its discrete gap have the following consequence.

\begin{corollary}
Let $1\le r\le n$. Then
\[
\operatorname{App}_{3}^{\pm}(r,n,n)=\frac1{\sqrt{rn}}
\]
if and only if a Hadamard matrix of order $n$ exists and $r\le\rho(n)$. If
equality fails, then
\[
\operatorname{App}_{3}^{\pm}(r,n,n) \ge\frac1{\sqrt{rn}} \left(
1+\frac2{n^{3}}\right)  ^{1/4}.
\]

\end{corollary}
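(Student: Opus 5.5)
The plan is to translate both assertions into statements about $\Lambda(r,n,n)$ using the identity \eqref{eq:sign-app-lambda}, and then quote the square classification and the discrete gap. For $k=3$ and the format $r\times n\times n$, every sign tensor has Frobenius norm $\sqrt{rn^2}=n\sqrt r$, so
\[
\operatorname{App}_{3}^{\pm}(r,n,n)=\frac{\Lambda(r,n,n)}{n\sqrt r}.
\]
In particular, $\operatorname{App}_{3}^{\pm}(r,n,n)=1/\sqrt{rn}$ holds exactly when $\Lambda(r,n,n)=\sqrt n$.

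For the equivalence I will invoke \Cref{thm:exact-square}. Its conditions (i) and (iii) say precisely that $\Lambda(r,n,n)=\sqrt n$ if and only if a Hadamard matrix of order $n$ exists and $r\le\rho(n)$. The hypothesis $r\le n$ plays no role in the argument. It only guarantees that $n$ is the largest dimension, so that $1/\sqrt{rn}$ is the bound of \Cref{thm:li-app} for this format. The corollary therefore also reads as a statement about attaining the Li--Nakatsukasa--Soma--Uschmajew bound with sign tensors.

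For the gap, suppose equality fails. Then $\Lambda(r,n,n)\neq\sqrt n$, and by the universal lower bound $\Lambda(r,n,n)>\sqrt n$. \Cref{thm:square-gap} then gives $\Lambda(r,n,n)\ge(n^2+2/n)^{1/4}=\sqrt n\,(1+2/n^3)^{1/4}$. Dividing by $n\sqrt r$ yields
\[
\operatorname{App}_{3}^{\pm}(r,n,n)\ge\frac{\sqrt n}{n\sqrt r}\Bigl(1+\frac{2}{n^3}\Bigr)^{1/4}=\frac{1}{\sqrt{rn}}\Bigl(1+\frac{2}{n^3}\Bigr)^{1/4}.
\]

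There is no real obstacle here. The only points that need care are the normalization $\|\mathcal E\|_2=n\sqrt r$ and checking that the dichotomy in \Cref{thm:square-gap} is applied to the minimum $\Lambda$ rather than to a single form. The theorem's second statement covers exactly that case.
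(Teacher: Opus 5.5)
Your proposal is correct and follows the paper's proof: you convert to $\Lambda(r,n,n)$ via \eqref{eq:sign-app-lambda} with $\|\mathcal E\|_2=n\sqrt r$, then apply \Cref{thm:exact-square} for the equivalence and \Cref{thm:square-gap} for the gap. The computation $(n^2+2/n)^{1/4}/(n\sqrt r)=(rn)^{-1/2}(1+2/n^3)^{1/4}$ is the same as the paper's.
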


\begin{proof}
By \eqref{eq:sign-app-lambda}, the first assertion is exactly
\Cref{thm:exact-square} after division by $n\sqrt r$. The second follows
from \Cref{thm:square-gap}, because
\[
\frac{(n^{2}+2/n)^{1/4}}{n\sqrt r} =\frac1{\sqrt{rn}} \left(  1+\frac2{n^{3}
}\right)  ^{1/4}.
\]

\end{proof}

The asymptotic constructions yield information not only for the restricted
class but also for the full real tensor space.

\begin{theorem}
Let $(r_{n})$ and $(m_{n})$
be positive integer sequences such that $1\le m_{n}\le n$ and
\[
\limsup_{n\to\infty}\frac{r_{n}}{\log_{2}n}<2.
\]
Then
\[
\operatorname{App}_{3}(\R;r_{n},m_{n},n) =\frac{1+o(1)}{\sqrt{r_{n}
m_{n}}}
\]
and
\[
\operatorname{App}_{3}^{\pm}(r_{n},m_{n},n) =\frac{1+o(1)}{\sqrt{r_{n} m_{n}}
}.
\]

\end{theorem}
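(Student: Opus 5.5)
The plan is to prove both identities by squeezing each quantity between the Li--Nakatsukasa--Soma--Uschmajew lower bound and the sign construction of \Cref{thm:subcritical-asymptotic}, using that a sign tensor is a particular real tensor.

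Eventually $r_n\le c\log_2 n<n$. So for large $n$ both $r_n\le n$ and $m_n\le n$ hold, which means the third dimension $n$ is the largest.

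\emph{Lower bound.} In \Cref{thm:li-app}, the minimum over $\nu$ of the product of the other dimensions is attained by omitting the largest dimension $n$. This gives
\[
\operatorname{App}_3(\R;r_n,m_n,n)\ge\frac{1}{\sqrt{r_nm_n}}.
\]
Every sign tensor is a nonzero real tensor of the same format, so
\[
\operatorname{App}_3^{\pm}(r_n,m_n,n)\ge\operatorname{App}_3(\R;r_n,m_n,n)\ge\frac{1}{\sqrt{r_nm_n}}.
\]

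\emph{Upper bound.} \Cref{thm:subcritical-asymptotic} gives $\Lambda(r_n,m_n,n)\le(1+o(1))\sqrt n$. By \eqref{eq:sign-app-lambda},
\[
\operatorname{App}_3^{\pm}(r_n,m_n,n)=\frac{\Lambda(r_n,m_n,n)}{\sqrt{r_nm_nn}}\le\frac{1+o(1)}{\sqrt{r_nm_n}}.
\]
Since $\operatorname{App}_3\le\operatorname{App}_3^{\pm}$, the same upper bound holds for the full real ratio. Combining the two bounds proves both asymptotic identities.

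The only point needing care is which dimension is largest in \Cref{thm:li-app}. This is why I first check that $r_n\le n$ eventually; the condition $m_n\le n$ is given in the hypotheses. Beyond that step the argument is bookkeeping, and there is no real obstacle.
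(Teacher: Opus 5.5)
Your proposal is correct and follows essentially the same route as the paper. You check that $r_n\le n$ eventually, so that $n$ is the largest dimension, and take the lower bound $1/\sqrt{r_nm_n}$ from \Cref{thm:li-app}. You then squeeze both ratios using $\operatorname{App}_3\le\operatorname{App}_3^{\pm}$ together with \Cref{thm:subcritical-asymptotic} and \eqref{eq:sign-app-lambda}, exactly as the paper does.
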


\begin{proof}
The assumption implies $r_{n}\le n$ for all sufficiently large $n$. Hence $n$
is a largest dimension, and the symmetric form of \Cref{thm:li-app} gives
\[
\operatorname{App}_{3}(\R;r_{n},m_{n},n)\ge\frac1{\sqrt{r_{n} m_{n}}
}.
\]
Since the minimum over all real tensors is no larger than the minimum over
sign tensors,
\[
\operatorname{App}_{3}(\R;r_{n},m_{n},n) \le\operatorname{App}
_{3}^{\pm}(r_{n},m_{n},n).
\]
The trilinear asymptotic theorem and \eqref{eq:sign-app-lambda} give
\[
\operatorname{App}_{3}^{\pm}(r_{n},m_{n},n) =\frac{\Lambda(r_{n},m_{n}
,n)}{\sqrt{r_{n} m_{n} n}} =\frac{1+o(1)}{\sqrt{r_{n} m_{n}}}.
\]
The two conclusions follow by comparison.
\end{proof}

The same argument applies in every fixed order.

\begin{corollary}
Fix $k\ge3$ and put $q=k-2$. Suppose that $m_{n}\le n$
and
\[
\limsup_{n\to\infty} \frac{r_{1,n}+\cdots+r_{q,n}}{\log_{2}n}<2.
\]
Then
\[
\operatorname{App}_{k}(\R;r_{1,n},\ldots,r_{q,n},m_{n},n)
=\frac{1+o(1)} {\sqrt{r_{1,n}\cdots r_{q,n}m_{n}}}
\]
and
\[
\operatorname{App}_{k}^{\pm}(r_{1,n},\ldots,r_{q,n},m_{n},n) =\frac{1+o(1)}
{\sqrt{r_{1,n}\cdots r_{q,n}m_{n}}}.
\]

\end{corollary}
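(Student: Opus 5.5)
The argument runs in parallel with the trilinear appendix theorem, and I will prove the two displayed asymptotics together by squeezing. Write $R_n=r_{1,n}\cdots r_{q,n}$. Every sign tensor of this format is a real tensor, so
\[
\operatorname{App}_k(\R;r_{1,n},\ldots,r_{q,n},m_n,n)\le\operatorname{App}_k^{\pm}(r_{1,n},\ldots,r_{q,n},m_n,n).
\]
It therefore suffices to show that the left-hand side is at least $R_n^{-1/2}m_n^{-1/2}$, and that the right-hand side is at most $(1+o(1))R_n^{-1/2}m_n^{-1/2}$.

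\textbf{Lower bound.} I will use \Cref{thm:li-app}. The bound given there is $(\min_\nu\prod_{\mu\ne\nu}d_\mu)^{-1/2}$, and the minimum of $\prod_{\mu\ne\nu}d_\mu$ is obtained by omitting the largest dimension. I therefore need $n$ to be a largest dimension for all large $n$. The hypothesis gives $r_{1,n}+\cdots+r_{q,n}\le c\log_2 n$ eventually, with some $c<2$. Hence each $r_{\nu,n}\le 2\log_2 n<n$ for large $n$, and $m_n\le n$ holds by assumption. Omitting $n$, the product of the remaining dimensions is $R_nm_n$. This gives $\operatorname{App}_k(\R;\ldots)\ge (R_nm_n)^{-1/2}$.

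\textbf{Upper bound.} For sign tensors I will use \eqref{eq:sign-app-lambda}, which reads
\[
\operatorname{App}_k^{\pm}=\frac{\Lambda_k(r_{1,n},\ldots,r_{q,n},m_n,n)}{\sqrt{R_nm_nn}}.
\]
The higher-order asymptotic construction theorem, whose hypotheses are exactly the present ones, gives $\Lambda_k=(1+o(1))\sqrt n$. Substituting this yields $\operatorname{App}_k^{\pm}=(1+o(1))(R_nm_n)^{-1/2}$. Combining it with the lower bound and the comparison above, both quantities are pinched between $(R_nm_n)^{-1/2}$ and $(1+o(1))(R_nm_n)^{-1/2}$.

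No step is a genuine obstacle. The only point needing care is to check that $n$ is eventually the largest dimension, so that the minimum in \Cref{thm:li-app} is the one leaving out $n$. The additive logarithmic condition on $r_{1,n}+\cdots+r_{q,n}$ makes this immediate.
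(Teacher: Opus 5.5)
Your proposal is correct and follows the paper's proof: the lower bound comes from \Cref{thm:li-app} once $n$ is eventually a largest dimension, and the upper bound comes from \eqref{eq:sign-app-lambda} combined with the higher-order asymptotic construction. The trivial comparison $\operatorname{App}_k\le\operatorname{App}_k^{\pm}$ closes the squeeze, and your explicit check that each $r_{\nu,n}\le c\log_2 n<n$ eventually fills in a step the paper states without detail.
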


\begin{proof}
Each $r_{\nu,n}$ is at most $n$ for all sufficiently large $n$, so $n$ is a
largest dimension. Apply the symmetric form of \Cref{thm:li-app} for the
lower bound and the higher-order asymptotic theorem together with
\eqref{eq:sign-app-lambda} for the upper bound.
\end{proof}

\begin{remark}
The theorem of Li, Nakatsukasa, Soma and Uschmajew characterizes exact
equality in the unrestricted problem through orthogonal tensors. Our square
classification determines when equality can be attained by a tensor whose
canonical coefficients belong to $\{-1,1\}$. In the logarithmic range above,
the universal lower bound also has the correct leading constant, and the
asymptotically extremal tensors may be chosen to have sign coefficients.
\end{remark}

\section*{Author contributions}

Daniel M. Pellegrino proposed the problem and initiated the project. Anselmo
Raposo Junior developed the main arguments and worked on the proofs. Both
authors discussed the results, checked the mathematical details, and
contributed to the writing and revision of the manuscript.

\section*{Funding}

This work was supported by the Conselho Nacional de Desenvolvimento
Cient\'ifico e Tecnol\'ogico (CNPq, Brazil), through Grants No. 406457/2023-9
(CNPq/MCTI Call No. 10/2023) and No. 403964/2024-5 (MCTI/CNPq Call No.
16/2024). D. Pellegrino was also supported by Grant No. 305807/2025-0, and
A. Raposo Jr. by Grant No. 302341/2025-0. The funding agency had no role in
the development of the results, the preparation of the manuscript, or the
decision to submit the article.

\section*{Data availability}

No data were used for the research described in this article.

\section*{Declaration of competing interest}

The authors declare that they have no known competing financial interests or
personal relationships that could have appeared to influence the work
reported in this article.

\section*{Use of Generative AI}

OpenAI's ChatGPT was used to assist with exposition, language refinement, and
bibliographic searches. All AI-assisted material was reviewed and verified by
the authors, who take full responsibility for the content of the manuscript.


\begin{thebibliography}{99}

\bibitem {AgrachevKozhasovUschmajew}A. Agrachev, K. Kozhasov, A. Uschmajew, \emph{Chebyshev polynomials and best rank-one approximation ratio},
SIAM J. Matrix Anal. Appl. 41 (2020), no. 1, 308--331.
\href{https://doi.org/10.1137/19M1269713}{doi:10.1137/19M1269713}.

\bibitem {AlbuquerqueRezende}N.~G. Albuquerque, L.~Rezende,
\emph{Asymptotic estimates for unimodular multilinear forms with small norms
on sequence spaces}, Bull. Braz. Math. Soc. (N.S.) 52 (2021), 23--39.
\href{https://doi.org/10.1007/s00574-019-00189-2}{doi:10.1007/s00574-019-00189-2}.

\bibitem {CaoLiWang}S. Cao, S. He, Z. Li, Z. Wang, \emph{Extreme ratio between spectral and Frobenius norms of nonnegative tensors},
SIAM J. Matrix Anal. Appl. 44 (2023), no. 2, 919--944.
\href{https://doi.org/10.1137/22M1502951}{doi:10.1137/22M1502951}.

\bibitem {GeramitaPullman}A.~V. Geramita, N.~J. Pullman, \emph{A theorem of
Hurwitz and Radon and orthogonal projective modules}, Proc. Amer. Math. Soc.
42 (1974), 51--56.
\href{https://doi.org/10.1090/S0002-9939-1974-0332764-4}{doi:10.1090/S0002-9939-1974-0332764-4}.

\bibitem {GowersHatami}W.~T. Gowers, O.~Hatami,
\emph{Inverse and stability theorems for approximate representations of
finite groups}, Sb. Math. 208 (2017), no.~12, 1784--1817.
\href{https://doi.org/10.1070/SM8872}{doi:10.1070/SM8872}.

\bibitem {Horadam}K.~J. Horadam, \emph{Hadamard Matrices and Their
Applications}, Princeton University Press, Princeton, NJ, 2007.

\bibitem {Hurwitz}A.~Hurwitz, \emph{\"{U}ber die Komposition der quadratischen
Formen}, Math. Ann. 88 (1923), 1--25.
\href{https://doi.org/10.1007/BF01448439}{doi:10.1007/BF01448439}.

\bibitem {Kahane}J.-P.~Kahane, \emph{Some Random Series of Functions},
2nd ed., Cambridge Studies in Advanced Mathematics, vol.~5, Cambridge
University Press, Cambridge, 1985.

\bibitem {Knese}G.~Knese, \emph{The von Neumann inequality for $3\times 3$ matrices},
Bull. Lond. Math. Soc. 48 (2016), no.~1, 53--57.
\href{https://doi.org/10.1112/blms/bdv087}{doi:10.1112/blms/bdv087}.

\bibitem {Kosinski}Ł.~Kosiński, \emph{Three-point Nevanlinna--Pick problem in the polydisc},
Proc. Lond. Math. Soc. (3) 111 (2015), no.~4, 887--910.
\href{https://doi.org/10.1112/plms/pdv045}{doi:10.1112/plms/pdv045}.

\bibitem {KozhasovCueto}K.~Kozhasov, J.~Tonelli-Cueto,
\emph{Probabilistic bounds on best rank-1 approximation ratio}, Linear
Multilinear Algebra 72 (2024), no.~17, 3000--3028.
\href{https://doi.org/10.1080/03081087.2024.2304146}{doi:10.1080/03081087.2024.2304146}.

\bibitem {LiNakatsukasaSomaUschmajew}Z.~Li, Y.~Nakatsukasa, T.~Soma,
A.~Uschmajew, \emph{On orthogonal tensors and best rank-one approximation
ratio}, SIAM J. Matrix Anal. Appl. 39 (2018), 400--425.
\href{https://doi.org/10.1137/17M1144349}{doi:10.1137/17M1144349}.

\bibitem {ManteroTonge}A.~M. Mantero, A.~M. Tonge,
\emph{The Schur multiplication in tensor algebras}, Studia Math. 68 (1980),
no.~1, 1--24.
\href{https://doi.org/10.4064/sm-68-1-1-24}{doi:10.4064/sm-68-1-1-24}.

\bibitem {MooreRussell}C.~Moore, A.~Russell,
\emph{Approximate representations, approximate homomorphisms, and
low-dimensional embeddings of groups}, SIAM J. Discrete Math. 29 (2015),
182--197.
\href{https://doi.org/10.1137/140958578}{doi:10.1137/140958578}.

\bibitem {PellegrinoRaposo}D.~Pellegrino, A.~Raposo Jr., \emph{Constants of
the Kahane--Salem--Zygmund inequality asymptotically bounded by 1}, J. Funct.
Anal. 282 (2022), 109293.
\href{https://doi.org/10.1016/j.jfa.2021.109293}{doi:10.1016/j.jfa.2021.109293}.

\bibitem {PellegrinoSerranoSilva}D.~Pellegrino, D.~Serrano-Rodr\'iguez,
J.~Silva, \emph{On unimodular multilinear forms with small norms on sequence
spaces}, Linear Algebra Appl. 595 (2020), 24--32.
\href{https://doi.org/10.1016/j.laa.2020.02.027}{doi:10.1016/j.laa.2020.02.027}.

\bibitem {Porteous}I.~R. Porteous, \emph{Clifford Algebras and the
Classical Groups}, Cambridge Studies in Advanced Mathematics, vol.~50,
Cambridge University Press, Cambridge, 1995.

\bibitem {Qi}L.~Qi, \emph{The best rank-one approximation ratio of a tensor
space}, SIAM J. Matrix Anal. Appl. 32 (2011), 430--442.
\href{https://doi.org/10.1137/100795802}{doi:10.1137/100795802}.

\bibitem {Radon}J.~Radon, \emph{Lineare Scharen orthogonaler Matrizen}, Abh.
Math. Sem. Univ. Hamburg 1 (1922), 1--14.
\href{https://doi.org/10.1007/BF02940576}{doi:10.1007/BF02940576}.

\bibitem {RegevVidick}O.~Regev, T.~Vidick,
\emph{Bounds on dimension reduction in the nuclear norm}, in B.~Klartag and
E.~Milman (eds.), \emph{Geometric Aspects of Functional Analysis: Israel Seminar (GAFA) 2017--2019, Vol.~II},
Lecture Notes in Math., vol.~2266, Springer, Cham, 2020, 279--299.
\href{https://doi.org/10.1007/978-3-030-46762-3_13}{doi:10.1007/978-3-030-46762-3\_13}.

\bibitem {SalemZygmund}R.~Salem, A.~Zygmund, \emph{Some properties of
trigonometric series whose terms have random signs}, Acta Math. 91 (1954),
245--301.
\href{https://doi.org/10.1007/BF02393433}{doi:10.1007/BF02393433}.

\bibitem {Seberry}J.~Seberry, \emph{Orthogonal Designs: Hadamard Matrices,
Quadratic Forms and Algebras}, Springer, Cham, 2017.
\href{https://doi.org/10.1007/978-3-319-59032-5}{doi:10.1007/978-3-319-59032-5}.

\bibitem {Serre}J.-P.~Serre, \emph{Linear Representations of Finite
Groups}, Graduate Texts in Mathematics, vol.~42, Springer-Verlag, New York,
1977.

\bibitem {Shapiro}D.~B. Shapiro, \emph{Compositions of Quadratic Forms},
De Gruyter Expositions in Mathematics, vol.~33, Walter de Gruyter, Berlin--New York, 2000.
\href{https://doi.org/10.1515/9783110824834}{doi:10.1515/9783110824834}.

\bibitem {Varopoulos}N.~Th. Varopoulos,
\emph{On an inequality of von Neumann and an application of the metric theory
of tensor products to operators theory}, J. Funct. Anal. 16 (1974), 83--100.
\href{https://doi.org/10.1016/0022-1236(74)90071-8}{doi:10.1016/0022-1236(74)90071-8}.

\end{thebibliography}
\end{document}